\documentclass[11pt,letterpaper]{amsart}

\usepackage{mathtools, thm-restate}
\usepackage{amsmath}
\usepackage{amsthm}
\usepackage{subcaption}
\usepackage[inline]{enumitem}
\usepackage{amssymb}
\usepackage{mathrsfs}
\usepackage{tikz-cd}
\usepackage{import}
\usepackage{pdfpages}
\usepackage{transparent}
\usepackage{xcolor}
\usepackage[colorlinks=true]{hyperref}
\usepackage{cleveref}
\usepackage{comment}
\usepackage{float}
\usepackage{pinlabel}

\newtheorem{thm}{Theorem}
\numberwithin{thm}{section}
\newtheorem{lem}[thm]{Lemma}
\newtheorem{prop}[thm]{Proposition}
\newtheorem{cor}[thm]{Corollary}
\newtheorem{conj}[thm]{Conjecture}

\theoremstyle{definition}
\newtheorem{dfn}[thm]{Definition}
\newtheorem{rmk}[thm]{Remark}
\newtheorem{ex}[thm]{Example}
\crefname{ex}{example}{examples}

\newcommand{\Z}{\mathbb{Z}}

\newcommand{\modg}{\mathrm{Mod}_g}

\newcommand{\mir}[1]{{\color{purple}MK: #1}}

\begin{document}
\title{Heegaard Floer homology and the word metric on the Torelli group}

\author{Santana Afton}
\email{santana.afton@gmail.com}

\author{Miriam Kuzbary}
\address{Department of Mathematics and Statistics, Amherst College, Amherst, MA 01002}
\email{mkuzbary@amherst.edu}

\author{Tye Lidman}
\address{Department of Mathematics, North Carolina State University, Raleigh, NC 27607}
\email{tlid@math.ncsu.edu}

\begin{abstract}
We study a relationship between the Heegaard Floer homology correction terms of integral homology spheres and the word metric on the Torelli group.  For example, we use it to give a quick proof that the Cayley graph of the Torelli group has infinite diameter in the word metric induced by the generating set of all separating twists and bounding pair maps.  On the other hand, we show that many subsets of the Torelli group are bounded with respect to this metric. Finally, we address the case of rational homology spheres by ruling out a certain Morita-type formula for congruence subgroups of mapping class groups. 
\end{abstract}
\maketitle

\section{Introduction} \label{sec:intro} Recall that for every closed, connected, oriented $3$-manifold $Y$ there is a positive integer $g$ such that $Y$ can be decomposed into two genus $g$ handlebodies glued together along an orientation reversing diffeomorphism of their boundary. This is called a \textit{Heegaard splitting} of $Y$, and is unique up to isotopy and stabilizations by a classical theorem of Reidemeister and Singer.  
As used to great effect in \cite{birmancraggs}, any Heegaard splitting of $Y$ can be constructed from a Heegaard splitting of the $3$-sphere in the following way. Given a Heegaard splitting of $S^3$ with gluing map $f$, there is an element $\phi$ in the mapping class group of the surface $\Sigma_g$ (denoted $\mathrm{Mod}_g$ for a genus $g$ surface) such that $Y$ is the result of gluing handlebodies together by the composition $f \circ \phi$. The equivalence of Heegaard splittings up to isotopies and stabilizations can be translated into an algebraic statement about mapping class groups \cite{birmancraggs, pitsch}.  More generally, given any oriented $3$-manifold $Y$ and a smoothly embedded surface $f: \Sigma_g \hookrightarrow Y$ in it, we can cut $Y$ open along $S=f(\Sigma_g)$ and reglue by some diffeomorphism $f \circ \phi$ of $S$. We will refer to this $3$-manifold as $Y_{S,\phi}$. If the surface $S$ is clear from context, we will omit $S$ from the notation.

This observation relating Heegaard splittings and the mapping class group leads to many results relating the algebraic structure of $\mathrm{Mod}_g$ and the topology of $3$-manifolds. For example, in \cite{lickorish}, Lickorish proves that every $3$-manifold can be obtained by surgery on a link; this is a direct application of Dehn's result that the mapping class group is generated by Dehn twists. Moreover, there is an entire program of using invariants of $3$-manifolds, such as the Rokhlin invariant $\mu$, to explore the structure of the {\em Torelli group}, which is the subgroup of $\mathrm{Mod}_g$ acting trivially on the homology of the surface $\Sigma_{g}$. This subgroup, denoted $\mathcal{I}_g$, is the main object of study in this paper. 

We will briefly survey what is known about the connections between $3$-manifold invariants and the Torelli group. The aforementioned work by Birman-Craggs \cite{birmancraggs} shows that $\phi \mapsto \mu(S^3_{S,\phi})$ provides homomorphisms from $\mathcal{I}_g$ to $\mathbb{Z}/2$; see also the work of Johnson \cite{johnson}. Furthermore, by lifting the $\Z/2\Z$-valued Rokhlin invariant to the $\Z$-valued Casson invariant $\lambda$, the Birman-Craggs homomorphism lifts to a $\mathbb{Z}$-valued homomorphism on a certain subgroup of the Torelli group by \cite{moritacharclass}.  More recently, in \cite{MSS} the Casson invariants of these glued up $3$-manifolds were instrumental in detecting the differences between two important algebraic filtrations on the Torelli group.  


In this work, we focus primarily on integral homology spheres as these are exactly the $3$-manifolds corresponding to changing the monodromy of a Heegaard splitting of $S^3$ by an element of the Torelli group \cite{moritacasson}. More generally, we will refer to $Y_{S, \phi}$ when $\phi \in \mathcal{I}_g$ as \textit{Torelli surgery}. A natural question is what Torelli surgery can see about the geometry of the Torelli group.  For example, Broaddus-Farb-Putman showed in \cite{broaddusfarbputman}  that for a Heegaard surface $S$ in $Y$ there is a quadratic bound
\begin{equation}\label{eq:bfp}
|\lambda(Y_{S,\phi}) - \lambda(Y)| \leq C \| \phi\|^2,
\end{equation}
where $\| \phi \|$ denotes the word metric with respect to a finite generating set on the Torelli group and $C$ is a fixed constant that depends on this choice.  Since Heegaard Floer homology (and other Floer theories) can recover the Casson invariant, we were curious what could be learned from applying these invariants to Torelli surgery.  We focus here on the Heegaard Floer $d$-invariants from \cite{ozszdinvt} as they are both important numerical 3-manifold invariants in their own right and because they provide a direct connection between Floer homology and the Casson invariant, which we recall in Equation~\eqref{eq:HFandcasson} below.

Our first result is the following.  Consider $\phi$ a separating Dehn twist or bounding pair map on a surface of genus $g$  (for more details on these mapping classes, see the next section).  By \cite{birman}, these maps generate $\mathcal{I}_g$. In either case, the curves which are being twisted separate the surface into two components and for each $\phi$ we will denote the minimum genus among these two surfaces as $k_\phi$.  Note that $k_\phi \leq \lfloor \frac{g}{2} \rfloor$.  

\begin{thm}\label{thm:lineargrowth} Let $S$ be an embedded surface of genus $g$ in a homology sphere $Y$.  Let $A$ be a generating set for $\mathcal{I}_g$ consisting of bounding pair maps and separating twists.  Let $k_A = \max_{\phi \in A} k_\phi$. Note that a generating set for $\mathcal{I}_g$ can always be chosen such that $k_A = 1$.  For any $\phi \in \mathcal{I}_g$, 
\[ |d(Y) - d(Y_{S,\phi})| \leq 2 \left \lceil \frac{k_A}{2} \right \rceil ||\phi||_A\]
where $||\phi||_A$ is the word length of $\phi$ in $\mathcal{I}_g$ and $d$ is the Heegaard Floer d-invariant. 
\end{thm}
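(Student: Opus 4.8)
\emph{Strategy and reduction to a single generator.} The plan is to control the change in $d$ produced by one generator at a time and then sum these contributions along a geodesic word for $\phi$. Write $\phi = \psi_1\cdots\psi_n$ with each $\psi_i\in A$ and $n=||\phi||_A$. In a homology sphere the closed surface $S$ is null-homologous, hence separates, and cutting $Y$ along $S$ and regluing by a Torelli element again produces a homology sphere containing an embedded genus-$g$ surface — this is a Mayer--Vietoris computation, using that the regluing map acts as the identity on $H_1(S)$. Consequently $Y_{S,\phi}$ is obtained from $Y$ by a sequence of $n$ Torelli surgeries, the $i$-th one along a copy of $S$ and the single generator $\psi_i$, so by the triangle inequality it suffices to prove
\[
\bigl|d(Y') - d(Y'_{S',\psi})\bigr| \;\le\; 2\left\lceil \tfrac{k_\psi}{2}\right\rceil
\]
for an arbitrary homology sphere $Y'$, an embedded genus-$g$ surface $S'\subset Y'$, and a single separating twist or bounding pair map $\psi$; summing these $n$ inequalities and using $k_\psi\le k_A$ then gives the theorem.

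\emph{From twisting to Dehn surgery.} Changing the regluing along a simple closed curve $c\subset S'$ by $T_c^{\pm1}$ produces exactly the result of $\mp1$ Dehn surgery on $c$ with the framing induced by $S'$ (the sign will be immaterial). If $\psi = T_\gamma^{\pm1}$ is a separating twist, then $\gamma$ splits $S'$ into subsurfaces of genera $a\le b$ with $a+b=g$ and $k_\psi=a$; pushing $\gamma$ off itself into the genus-$b$ side shows that the $S'$-framing of $\gamma$ is its $0$-framing, so $Y'_{S',\psi}=Y'_{\pm1}(\gamma)$ in the usual normalization, and the genus-$a$ subsurface is a Seifert surface for $\gamma$ in $Y'$ of genus $k_\psi$. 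If $\psi = T_\gamma T_\delta^{-1}$ is a bounding pair map, then $Y'_{S',\psi}$ is obtained by surgery on the two-component link $\gamma\cup\delta$ with coefficients $(\mp1,\pm1)$, and here $\gamma$ and $\delta$ together cobound a subsurface of $S'$ of genus $k_\psi$.

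\emph{The $d$-invariant estimate.} For a null-homologous knot $K$ in a homology sphere $Y'$, the $\pm1$-surgery formula expresses $d(Y'_{\pm1}(K))$ in terms of $d(Y')$ and nonnegative $V$-type invariants of $K$; combined with the bound $V_0(K)\le\lceil g_4(K)/2\rceil$ — where $g_4(K)$ is the minimal genus of a surface bounded by $K$ in $Y'\times I$, in particular at most the genus of any Seifert surface for $K$ in $Y'$ — this yields $|d(Y')-d(Y'_{\pm1}(K))|\le 2\lceil g_4(K)/2\rceil$. For a separating twist this applies verbatim with $K=\gamma$ and the genus-$k_\psi$ Seifert surface found above, giving $|d(Y')-d(Y'_{S',\psi})|\le 2\lceil k_\psi/2\rceil$. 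For a bounding pair map one argues analogously after reducing, via handle slides on $\gamma\cup\delta$, to $\pm1$ surgery on a single null-homologous knot that bounds a genus-$k_\psi$ surface in $S'$ (equivalently, by feeding the null-homologous link $\gamma\cup\delta$, together with its genus-$k_\psi$ spanning surface, into the link surgery mapping cone formula).

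\emph{Main obstacle.} The bounding pair case is the real technical point. It is \emph{not} enough to surger on $\gamma$ and then on $\delta$ and estimate each step separately, since $\gamma$ alone need not bound a small-genus surface in $Y'$; the crucial fact is that $\gamma$ and $\delta$ \emph{jointly} bound a genus-$k_\psi$ surface already inside $S'$, so the argument must package the two surgeries (or use the link formula) in a way that sees this single surface. The other nontrivial input, the genus bound $V_0(K)\le\lceil g_4(K)/2\rceil$ over an arbitrary homology sphere, we take from the literature on the surgery formula; the remaining steps are bookkeeping with the triangle inequality.
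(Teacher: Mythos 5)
Your reduction is the same as the paper's: stack parallel copies of $S$, bound the change of $d$ for a single separating twist or bounding pair map by $2\lceil k_\psi/2\rceil$, and sum along a geodesic word (this is Proposition~\ref{prop:onetorellielt} plus Theorem~\ref{thm:preciselineargrowth}). Your separating-twist case is fine in outline: it is the statement that $\pm 1$-surgery on a genus-$m$ knot in a homology sphere changes $d$ by at most $2\lceil m/2\rceil$, which the paper obtains from \cite[Proposition 3.6]{LMPC} together with Park's computation of $d_b(N_{-1,m})$ rather than from a $V_0 \leq \lceil g/2\rceil$ bound; but the input you cite does exist for knots in homology spheres (one direction is free from the definite two-handle cobordism, the other is exactly the genus bound), so this half is acceptable modulo a careful reference.

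The genuine gap is the bounding pair case, which you correctly flag as the crux but then do not actually prove. The proposed handle-slide reduction ``to $\pm 1$ surgery on a single null-homologous knot that bounds a genus-$k_\psi$ surface'' does not work. Concretely: $Y'_{S',\psi}$ is $(s-1,s+1)$-surgery on $\gamma\cup\delta$ with $lk(\gamma,\delta)=s$; sliding $\delta$ over the reversed $(s-1)$-framed pushoff of $\gamma$ produces a component $\delta'$ of framing $(s+1)+(s-1)-2s=0$ and linking number $s-(s-1)=1$ with $\gamma$. You are left with a two-component link with framings $(s-1,0)$, and there is no further slide or slam dunk that collapses it to one component unless $\delta'$ happens to be a meridian of $\gamma$ (which occurs only in the trivial annulus case $k_\psi=0$). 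So the bounding pair map is not a single $\pm1$-surgery in disguise, and as you yourself note, surgering $\gamma$ and $\delta$ one at a time is useless because neither curve alone has controlled genus. The fallback you offer --- ``feed the link into the link surgery mapping cone formula'' --- is not bookkeeping: one would need a genus bound on the relevant link $V$-type invariants, and no such bound is stated or cited. This is precisely where the paper's proof has real content: it invokes the inequality $d(Y_\phi)-d(Y)+\underline{d}(N_{0,m})\geq -\tfrac{2m+1}{2}$ from the proof of \cite[Theorem 1.9]{LMPC} (morally, an adjunction-type estimate coming from the closed genus-$m$ surface of square zero in the surgery cobordism, i.e.\ the $0$-framed component above), together with Behrens--Golla's computation $\underline{d}(N_{0,m})=(-1)^{m+1}/2$, and then gets the two-sided bound by applying the one-sided bound to $\phi^{-1}$ as well. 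Without an argument of this kind your proof of the single-generator estimate for bounding pair maps, and hence of the theorem, is incomplete.
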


A more general version of the above theorem is given in Section \ref{sec:proofs}.   We will use the notation $\| \cdot \|_\infty$ to denote the word length on the Torelli group with respect to the infinite generating set of all separating twists and bounding pair maps. 

An immediate consequence of Theorem~\ref{thm:lineargrowth} is a quick proof of the following result, which is likely known to experts:
\begin{cor}\label{cor:surgery-bound}
Suppose that $K$ is a knot in a homology sphere $Y$ and $n \in \mathbb{Z}$.  Then, 
\[
|d(Y_{1/n}(K)) - d(Y)| \leq 2|n| \left \lceil \frac{g(K)}{2} \right \rceil. 
\]
\end{cor}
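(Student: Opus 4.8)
The plan is to realize $Y_{1/n}(K)$ as a Torelli surgery on a suitable embedded surface and then invoke Theorem~\ref{thm:lineargrowth}. First I would take $F$ to be a Seifert surface for $K$ in $Y$ of genus $g(K)$, and let $S$ be the boundary of a tubular neighborhood of $F$; this is an embedded closed surface in $Y$ of genus $2g(K)$, splitting $Y$ into the handlebody-like piece $F \times [-1,1]$ (a neighborhood of the Seifert surface) and its complement. The key topological observation is that $1/n$-surgery on $K$ can be effected by cutting $Y$ along $S$ and regluing by $\phi^n$, where $\phi$ is the point-pushing-type map along a pushoff curve of $K$ — concretely, $\phi$ is the composition of a positive Dehn twist along one pushoff of $K$ onto $S$ and a negative Dehn twist along the oppositely-pushed-off parallel copy. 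Because these two curves are homologous in $S$ (they cobound an annulus, being the two boundary components of $F$ pushed to either side), $\phi$ is a bounding pair map, hence lies in $\mathcal{I}_{2g(K)}$; moreover one of the two complementary pieces of $S$ cut along this bounding pair is precisely (a copy of) the Seifert surface $F$, of genus $g(K)$. So for this specific $\phi$ we have $k_\phi = g(K)$.

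The second step is bookkeeping on word length. Taking $A$ to be any generating set for $\mathcal{I}_{2g(K)}$ that contains this single bounding pair map $\phi$ (and is otherwise, say, a finite generating set of bounding pairs and separating twists), we get $k_A \le \max(g(K), 1) = g(K)$ whenever $g(K)\ge 1$, and $\|\phi^n\|_A \le |n|$ since $\phi^n$ is the word $\phi \cdots \phi$ of length $|n|$ in $A$. Applying Theorem~\ref{thm:lineargrowth} with this surface $S$, this $\phi^n$, and this $A$ gives
\[
|d(Y) - d(Y_{S,\phi^n})| \le 2\left\lceil \frac{k_A}{2}\right\rceil \|\phi^n\|_A \le 2\left\lceil \frac{g(K)}{2}\right\rceil |n|,
\]
and since $Y_{S,\phi^n} = Y_{1/n}(K)$ this is exactly the claimed bound. (If $g(K)=0$, i.e. $K$ is unknotted in a homology sphere, then $1/n$-surgery gives back $Y$ and the bound is $0=0$; alternatively one replaces $g(K)$ by $\max(g(K),1)$ throughout, which the ceiling notation already tolerates since the statement as written uses $\lceil g(K)/2\rceil$.)

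The main thing to verify carefully — the one genuinely topological point rather than bookkeeping — is the claim that $1/n$-surgery on $K$ is realized by regluing along $S$ via $\phi^n$ with $\phi$ the indicated bounding pair map. This is the standard description of surgery via twisting along a Seifert-framed curve on the double of a Seifert surface: twisting once along the pushed-off copy of $K$ changes the gluing by exactly the $1$-framed twist, and since $K$ bounds $F$ in $Y$ its surface framing agrees with the $0$-framing, so $n$ iterations produce $1/n$-surgery (with the sign convention matching the orientation of $\phi$). I would spell this out using the cut-and-reglue description of $Y_{S,\psi}$ from the introduction, checking that the ambient homology sphere $Y$ is recovered when $\psi = \mathrm{id}$ and that one Dehn twist along the pushoff corresponds to a single $\pm 1$ Rolfsen twist; the identification of the two pushoffs as a homologous (bounding) pair, and the genus count $k_\phi = g(K)$, then follow immediately from the structure of $F \times [-1,1]$.
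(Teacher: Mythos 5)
There is a genuine gap in the key topological step. On $S = \partial(F\times[-1,1])$ the two pushoffs of $K$ are the curves $\partial F\times\{-1\}$ and $\partial F\times\{+1\}$, and these cobound the annulus $\partial F\times[-1,1]\subset S$; in particular they are \emph{isotopic} on $S$, so your map $\phi = T_{K\times\{1\}}T_{K\times\{-1\}}^{-1}$ is the identity in $\mathrm{Mod}_{2g}$ and regluing by $\phi^n$ returns $Y$, not $Y_{1/n}(K)$. You can see this directly from the paper's own lemmas: each pushoff has surface framing equal to its Seifert framing ($s=0$), so Lemma~\ref{lem:boundingpair-surgery} says the proposed surgery is $(-1/n,+1/n)$-surgery on two $0$-framed parallel copies of $K$, and Lemma~\ref{lem:cable} (with $\ell=0$) shows this yields $Y$ back. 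A related symptom is that your genus count is off: each pushoff is individually separating on $S$ (it bounds a copy of $F$), so the pair cuts $S$ into \emph{three} pieces (two copies of $F$ and an annulus), not two, and this is not a bounding pair in the sense used to define $k_\phi$.

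The fix is the paper's route, which you in fact gesture at in your final paragraph: take $\phi$ to be the \emph{single separating Dehn twist} along $K$ itself, which sits on $S$ as the core of the annulus $\partial F\times[-1,1]$ and separates $S$ into two genus-$g(K)$ subsurfaces, so $k_\phi = g(K)$. Since the surface framing of $K$ equals its Seifert framing, Lemma~\ref{lem:twist-surgery} gives $Y_{S,\phi^{\mp n}} = Y_{1/n}(K)$, and your word-length bookkeeping ($\|\phi^{n}\|_A\le |n|$, $k_A = \max(g(K),1)$) then goes through verbatim with Theorem~\ref{thm:lineargrowth} to give the stated bound. (The paper also notes, in the proof of Proposition~\ref{prop:onetorellielt}, that the separating-twist case actually yields the sharper bound $|n|\lceil g(K)/2\rceil$.)
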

\begin{proof}
Given a Seifert surface $F$ for $K$, $F \times I$ is a genus $2g$ handlebody and $K$ sits on the boundary as a separating curve, splitting the boundary into two genus $g$ surfaces.  Now, by the Lickorish trick (see Lemma~\ref{lem:twist-surgery}  or Example~\ref{ex:knottotwist}) below, $1/n$-surgery on $K$ can be recast as Torelli surgery using powers of a Dehn twist along this separating curve.  The result then follows from Theorem~\ref{thm:lineargrowth}.     \end{proof}

\begin{rmk}
While this corollary suggests that the $d$-invariants of $1/n$-surgery can be unbounded for a fixed knot, this is impossible.  This follows, for example, from Theorem~\ref{thm:family} below (see also Proposition~\ref{prop:separating} for this specific case).    
\end{rmk}  

We can use the well-known result that the $d$-invariants are unbounded on manifolds with fixed Heegaard genus to prove the following corollary about the Torelli group. We thank Andy Putman for explaining that this would be worthy of proof. 

\begin{restatable}{corol}{cayleygraph}\label{cor:cayleygraph}
The Cayley graph of the Torelli group has infinite diameter with respect to the infinite generating set consisting of all separating twists and bounding pair maps. 
\end{restatable}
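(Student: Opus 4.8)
The plan is to combine Theorem~\ref{thm:lineargrowth} with the well-known fact that the Heegaard Floer $d$-invariant is unbounded on the collection of integral homology spheres of any fixed Heegaard genus. So, fix an integer $g$ large enough that this holds, choose a sequence $Y_1, Y_2, \dots$ of integral homology spheres, each of Heegaard genus at most $g$, with $|d(Y_n)| \to \infty$, and fix once and for all the standard genus $g$ Heegaard surface $S \subset S^3$.

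The first step is to realize each $Y_n$ as a Torelli surgery on this fixed surface $S \subset S^3$. This is precisely the content of the observation of Morita recalled in the introduction (\cite{moritacasson}): an integral homology sphere of Heegaard genus at most $g$ can always be written as $S^3_{S,\phi}$ for some $\phi \in \mathcal{I}_g$. Concretely, one takes a Heegaard splitting of $Y_n$ of genus at most $g$, stabilizes it to genus $g$, and then modifies the identifications of the two handlebodies with standard ones (which changes the gluing map but not the manifold $Y_n$) so that the new gluing agrees with the $S^3$ gluing on $H_1(\Sigma_g;\mathbb{Z})$; the correction relative to the $S^3$ splitting then lies in $\mathcal{I}_g$. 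Fix such a $\phi_n \in \mathcal{I}_g$ with $Y_n = S^3_{S,\phi_n}$.

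Now apply Theorem~\ref{thm:lineargrowth} with $A$ the full generating set of \emph{all} separating twists and bounding pair maps on $\Sigma_g$, so that $\|\cdot\|_A = \|\cdot\|_\infty$ and $k_A \le \lfloor g/2\rfloor$. Since $d(S^3) = 0$, for every $n$ this gives
\[
|d(Y_n)| \;=\; |d(S^3) - d(S^3_{S,\phi_n})| \;\le\; 2\left\lceil \tfrac{k_A}{2}\right\rceil \|\phi_n\|_\infty \;=:\; C_g\,\|\phi_n\|_\infty,
\]
where $C_g$ depends on $g$ alone. As $g$ is fixed, $\|\phi_n\|_\infty \ge |d(Y_n)|/C_g \to \infty$, so the word length on $\mathcal{I}_g$ with respect to the generating set of all separating twists and bounding pair maps is unbounded; equivalently, its Cayley graph has infinite diameter.

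Beyond Theorem~\ref{thm:lineargrowth}, the argument rests on two inputs: the realization of bounded-genus homology spheres as Torelli surgeries on a fixed Heegaard surface of $S^3$, and the unboundedness of $d$ at fixed Heegaard genus. I expect the latter to be the essential point---it is the only place where a fact about $3$-manifolds (rather than about the Torelli group) is used, and it is exactly what forces the genus to be held fixed: if the genus of the $Y_n$ were allowed to grow with $n$, the constant $C_g$ would grow as well and the estimate would be vacuous. The realization step, by contrast, is routine once one invokes Morita's theorem and is careful about stabilizations.
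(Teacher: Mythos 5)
Your argument is correct and follows essentially the same route as the paper: realize a sequence of homology spheres of bounded Heegaard genus but unbounded $d$-invariant as Torelli surgeries on a fixed Heegaard surface of $S^3$, then invoke Theorem~\ref{thm:lineargrowth} to force $\|\phi_n\|_\infty \to \infty$. The only difference is that the paper makes the unboundedness input concrete by taking $Y_n = \Sigma(2,4n+1,8n+1)$, which has Heegaard genus $2$ and $d(Y_n) = -2n$, where you cite the general fact abstractly.
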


\begin{proof}
Fix $g \geq 2$.  Consider the Brieskorn spheres $Y_n = \Sigma(2,4n+1,8n+1)$, oriented as the boundary of a positive-definite plumbing; these are alternatively described as $+1$-surgery on the $(2,4n+1)$-torus knot.  The manifold $Y_n$ has $d$-invariant $-2n$ (see for example \cite[Corollary 1.5]{OzsvathSzabo-alternating}).  Since the Heegaard genus of $Y_n$ is 2, after stablization these manifolds have Heegaard splittings of genus $g$.  Now, fix a Heegaard surface $S$ in $S^3$, and choose $\phi_n \in \mathcal{I}_g$ such that $S^3_{\phi_n} = Y_n$.  Theorem~\ref{thm:preciselineargrowth} shows that $\| \phi_n \|_\infty$ is unbounded in $n$.  This proves the desired result. 
\end{proof}


\begin{rmk}
Ian Agol noted to us that this result can alternatively be proved from the fact the stable commutator length of a Dehn twist is positive (see \cite{BestvinaBrombergFujiwara, EndoKotschick, Korkmaz}).  It is interesting to note that the methods of \cite{EndoKotschick, Korkmaz} rely on the topology of symplectic four-manifolds, but the four-dimensional information captured using these techniques is different than what the $d$-invariants seem able to capture. 
\end{rmk}

\begin{rmk}
It seems likely that one could also study various subgroups of the braid group as follows.  Given a braid which lifts to a Torelli element on a branched cover, we can consider the $d$-invariant of Torelli surgery on say $S^3$.  We do not pursue this here.
\end{rmk}


 %

For the sake of comparison, we show that there is no analogue of \eqref{eq:bfp} when working with $\| \cdot \|_\infty$.  

\begin{restatable}{thm}{cassonfail}\label{thm:cassonfail}
Fix an integral homology sphere $Y$ and an embedded surface $S$ of genus $g$.  There is no function $f: \mathbb{R}^+ \to \mathbb{R}^+$ such that for all $\phi \in \mathcal{I}_g$,
\[
|\lambda(Y) - \lambda(Y_{S,\phi})| \leq f(\|\phi\|_\infty).
\]
\end{restatable}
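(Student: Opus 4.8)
The plan is to exploit the contrast between two facts: the $d$-invariant changes by a \emph{bounded} amount under a single separating twist or bounding pair map (this is exactly the content of Theorem~\ref{thm:lineargrowth}, giving a linear bound in $\|\phi\|_\infty$ up to a universal constant depending only on $g$), whereas the Casson invariant is \emph{not} so constrained. More precisely, I would first recall the relationship between $d$ and $\lambda$ for the manifolds obtained by Torelli surgery --- the formula alluded to as Equation~\eqref{eq:HFandcasson} in the introduction, which for an integral homology sphere expresses $\lambda$ in terms of Heegaard Floer data (the Casson invariant equals a suitable count involving $d$ together with the Euler characteristic of reduced $HF^+$, or via the relation $\lambda(Y) = \chi(HF_{\mathrm{red}}(Y)) - \tfrac12 d(Y)$ in the appropriate normalization). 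The strategy is then to produce a sequence $\phi_n \in \mathcal{I}_g$ with $\|\phi_n\|_\infty$ bounded (ideally constant, or at least $O(1)$) while $|\lambda(Y) - \lambda(Y_{S,\phi_n})| \to \infty$; since any candidate function $f$ would have to dominate an unbounded sequence of values at a bounded set of inputs, no such $f$ can exist.

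Concretely, I would first reduce to the case $Y = S^3$ with $S$ a standard Heegaard surface: given a single separating curve $\gamma$ on $S$ of genus $k$, powers $T_\gamma^m$ of the Dehn twist along $\gamma$ all lie in $\mathcal{I}_g$ and, crucially, all have $\|T_\gamma^m\|_\infty = 1$ since each is itself a single separating twist (this is the key observation that makes $\|\cdot\|_\infty$ so much weaker than a word metric on a \emph{finite} generating set). By the Lickorish trick used in the proof of Corollary~\ref{cor:surgery-bound} (see Lemma~\ref{lem:twist-surgery}), one can arrange that $S^3_{S, T_\gamma^m}$ is $\pm 1/m$-surgery on a knot $K$ sitting on $S$. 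Choosing $K$ to be a torus knot --- say $T_{2,2k+1}$ embedded on a genus-$k$ subsurface --- the manifolds $S^3_{1/m}(T_{2,2k+1})$ are Brieskorn spheres whose Casson invariants are known to grow linearly in $m$ (this is the classical Casson-invariant surgery formula $\lambda(S^3_{1/m}(K)) = \tfrac{m}{2}\Delta_K''(1)$, which is nonzero for a nontrivial torus knot). Thus $|\lambda(S^3) - \lambda(S^3_{S, T_\gamma^m})| = \tfrac{|m|}{2}|\Delta_K''(1)| \to \infty$ while the word length stays equal to $1$, contradicting any bound of the desired form. I would then remark that the same phenomenon persists after stabilizing to higher genus $g$ and for a general homology sphere $Y$, since Torelli surgery with $T_\gamma^m$ still realizes $1/m$-surgery on a knot in $Y$ and the surgery formula for $\lambda$ continues to give linear growth whenever $\Delta_K''(1) \neq 0$.

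The main obstacle is purely one of bookkeeping rather than depth: one must verify that the knot $K$ produced by the Lickorish trick on the chosen separating curve genuinely has $\Delta_K''(1) \neq 0$ (equivalently, nonzero Casson-surgery slope), and that this can be done \emph{within} whatever surface $S$ and ambient manifold $Y$ were fixed in the statement --- i.e.\ one is not free to choose $K$, only $\phi$, so the knot is whatever the geometry of $S \hookrightarrow Y$ hands you. The clean way around this is to observe that Theorem~\ref{thm:lineargrowth} already does the heavy lifting in the opposite direction: if such an $f$ existed, combining it with the known behavior of $d$ under these surgeries would force $\chi(HF_{\mathrm{red}})$ to be bounded on the sequence $Y_{S,\phi_n}$, which one rules out directly by noting these are Heegaard Floer $L$-spaces or near-$L$-spaces with unbounded $d$, or more simply by contradicting the Frøyshov/Ozsváth--Szabó fact that $\lambda$ detects $d$-growth. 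So the argument is essentially: pick the twist powers, invoke the surgery formula for $\lambda$, and check nonvanishing of one Alexander-polynomial coefficient --- I expect the last to be the only place requiring any genuine verification.
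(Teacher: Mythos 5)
There is a genuine gap, and it sits at the very center of your argument: the claim that $\|T_\gamma^m\|_\infty = 1$ because ``each is itself a single separating twist'' is false. The generating set consists of the Dehn twists $T_c$ along separating curves $c$ (and bounding pair maps), not their powers; for $|m|\geq 2$ the mapping class $T_\gamma^m$ is not a Dehn twist along any curve, nor a bounding pair map, so it is not a generator. All you can say a priori is $\|T_\gamma^m\|_\infty \leq |m|$. With that bound your computation $|\lambda(S^3_{1/m}(K))| = \tfrac{|m|}{2}|\Delta_K''(1)|$ is perfectly consistent with the linear function $f(x) = \tfrac{1}{2}|\Delta_K''(1)|\,x$, so powers of a fixed separating twist cannot disprove the existence of $f$. (Indeed, this is exactly the situation already covered by the Broaddus--Farb--Putman bound \eqref{eq:bfp}, and one can show via quasimorphism arguments that $\|T_\gamma^m\|_\infty$ actually grows in $m$, so there is no rescue by proving the word length is secretly bounded.) Your fallback paragraph does not repair this: the assertion that a bound on $\lambda$ would force $\chi(HF_{\mathrm{red}})$ to be bounded, to be contradicted by some ``$\lambda$ detects $d$-growth'' principle, is not an argument --- Theorem~\ref{thm:family} shows $d$ is \emph{bounded} on $\{Y_{T_\gamma^m}\}_m$, and it is $\chi(HF_{\mathrm{red}})$ that absorbs the growth of $\lambda$ here, so nothing is contradicted.

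The paper's proof fixes exactly this defect by varying the \emph{curve} rather than the \emph{power}: it exhibits an infinite family of distinct separating curves on a single fixed genus-$2$ subsurface of $S$ --- the twist knots $K_n$, all obtained from one another by meridional twisting so that each sits on the same surface as a separating curve with surface framing equal to the Seifert framing --- and takes a \emph{single} positive twist along each. Every one of these elements genuinely has $\|\cdot\|_\infty$ equal to $1$, while $\lambda(S^3_{1}(K_n)) = n$ by the surgery formula since $\Delta_{K_n}(t) = nt - (2n-1) + nt^{-1}$. If you want to salvage your write-up, replace ``powers of one twist on one curve'' with ``single twists on a sequence of separating curves whose Alexander polynomials have unbounded second derivative at $1$,'' and verify that such a sequence of curves can be realized on the given surface $S \subset Y$ (the checkerboard-surface construction in the paper does this for a Heegaard surface in $S^3$, and stabilization handles higher genus).
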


Another consequence of Theorem \ref{thm:lineargrowth} is in the setting of the homology cobordism group; this is the set of homology cobordism classes of integral homology spheres with connected sum as the group operation \cite{acuna}. Recall that two homology three-spheres are homology cobordant if they cobound a smooth four-manifold with the homology of $S^3 \times I$. By \cite{furuta, fintushelstern}, this group is infinitely generated; however, Theorem \ref{thm:lineargrowth} illustrates that we can use the fact that the Torelli group is finitely generated for $g>2$ as proven in \cite{johnson1985structure} to bound the $d$-invariant without a finite generating set for the homology cobordism group itself.  In the case of genus 2, Johnson proved the Torelli group is infinitely generated and Mess further showed that it is a free group on infinitely many separating twists \cite{mess}.

The explicit relationship between the $d$-invariant and the Casson invariant $\lambda$ is 
\begin{equation} \label{eq:HFandcasson}
    \lambda(Y) = \chi(HF_{red}(Y)) - \frac{1}{2}d(Y)
\end{equation}
by Theorem 1.3 in \cite{ozszdinvt} where $HF_{red}(Y)$ is the $U$-torsion summand of $HF^{-}(Y)$ and the Casson invariant is normalized so that $\lambda(\Sigma(2,3,5))=1$ where $\Sigma(2,3,5)$ is oriented as $+1$-surgery on the right-handed trefoil.   In fact, the $d$-invariant was originally called the ``correction term" precisely because it captures the difference between the Casson invariant and $\chi(HF_{red}(Y))$. Since Broaddus-Farb-Putman show that \eqref{eq:HFandcasson} is sharp in a suitable sense, combining \eqref{eq:HFandcasson} with Theorem \ref{thm:lineargrowth}, we see this means the Euler characteristic of $HF_{red}(Y)$ can grow much faster than $d(Y)$ as a function of the word length in the Torelli group.

To further account for the difference between these Heegaard Floer invariants and previously studied $3$-manifold invariants in this setting, we prove the following theorem. Recall that both the Rokhlin invariant and the Casson invariant are finite-type invariants \cite{ohtsuki}.

\begin{restatable}{thm}{finitetype}\label{thm:finitetype}
Neither the $d$-invariant nor the Euler characteristic of $HF_{red}$ is a finite-type invariant of homology three-spheres.
\end{restatable}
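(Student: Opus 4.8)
The plan is to first reduce the statement to a claim about the $d$-invariant alone, and then to rule out finite type for $d$ by analyzing $1/n$-surgeries on a single knot. For the reduction: by Equation~\eqref{eq:HFandcasson} we have $\chi(HF_{red}(Y)) = \lambda(Y) + \tfrac12 d(Y)$ for every integral homology sphere $Y$. The Casson invariant $\lambda$ is of finite type \cite{ohtsuki}, and $\mathbb{Z}$-linear combinations of finite-type invariants are again of finite type, so $d = 2\bigl(\chi(HF_{red}) - \lambda\bigr)$ is of finite type if and only if $\chi(HF_{red})$ is. Thus it suffices to show that $d$ is not a finite-type invariant.

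Suppose toward a contradiction that $d$ has finite type of order $\le m$, and fix a knot $K$ in a homology sphere $Y$. Using the Lickorish trick (Lemma~\ref{lem:twist-surgery}, Example~\ref{ex:knottotwist}), $Y_{1/n}(K)$ is obtained from $Y$ by regluing along the genus-$2g$ surface $\Sigma = \partial(F\times I)$ (for $F$ a Seifert surface) by $\tau_c^{\pm n}$, where $c \subset \Sigma$ is the separating curve along which the trick twists, isotopic in $Y$ to $K$. Iterating a single twist, $Y_{1/(n+j)}(K)$ is obtained from $Y_{1/n}(K)$ by $(\mp 1)$-surgery on $j$ parallel pushoffs of $c$ into a collar of $\Sigma \subset Y_{1/n}(K)$. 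Since $c$ is separating on $\Sigma$, its surface framing coincides with its Seifert framing and any two such pushoffs have linking number $0$ in $Y_{1/n}(K)$; hence taking exactly $m+1$ pushoffs produces an algebraically split, $(\mp 1)$-framed $(m+1)$-component link $L \subset Y_{1/n}(K)$ with the property that surgery on any $j$-element sublink of $L$ yields $Y_{1/(n+j)}(K)$. Feeding $L$ into the defining relation of an invariant of order $\le m$ --- vanishing of the alternating sum of $d$-values over the $2^{m+1}$ surgeries on sublinks of $L$ --- then gives
\[
\sum_{j=0}^{m+1} (-1)^j \binom{m+1}{j}\, d\bigl(Y_{1/(n+j)}(K)\bigr) = 0 \qquad\text{for all } n \in \mathbb{Z},
\]
so the $(m+1)$-st finite difference of $n \mapsto d(Y_{1/n}(K))$ vanishes identically and this function agrees with a polynomial in $n$ on all of $\mathbb{Z}$.

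To reach a contradiction I would take $K = T_{2,3}$, the right-handed trefoil in $S^3$, so that $Y = S^3$, $S^3_{1/0}(K) = S^3$, and $S^3_{+1}(K) = \Sigma(2,3,5)$. On one hand, by Theorem~\ref{thm:family} (see also Proposition~\ref{prop:separating}) the values $d(S^3_{1/n}(K))$ are bounded in $n$, hence take only finitely many integer values, so the polynomial above must be constant. On the other hand $d(S^3) = 0$ while $d(\Sigma(2,3,5)) = -2 \neq 0$ (for instance by \cite{OzsvathSzabo-alternating}), so this function is nonconstant --- a contradiction. Therefore $d$, and hence also $\chi(HF_{red})$, fails to be of finite type.

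The main obstacle is the middle step: realizing the passage from the $1/n$- to the $1/(n+1)$-surgery on $K$ as a single surgery on a component of an \emph{algebraically split} $\pm 1$-framed link, precisely the type of move appearing in Ohtsuki's definition of finite type. This rests on the twisting curve $c$ being separating on $\Sigma$ --- so that its surface framing is its Seifert framing and its parallel pushoffs are algebraically unlinked --- together with the bookkeeping identifying $j$ consecutive Dehn twists along $c$ with the jump from the $1/n$- to the $1/(n+j)$-surgery; both are immediate from the Lickorish trick already used for Corollary~\ref{cor:surgery-bound}. Once this is in place, the remainder is a formal finite-difference argument combined with the single numerical input $d(\Sigma(2,3,5)) \neq 0 = d(S^3)$.
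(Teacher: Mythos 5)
Your proof is correct, but the core of it is genuinely different from the paper's. Both arguments make the same first move, reducing to the $d$-invariant via \eqref{eq:HFandcasson} and the finite-typeness of $\lambda$. From there the paper constructs, for each $n$, the Brunnian algebraically split link $L_n$ obtained by $(n-3)$-fold Bing doubling one component of the Borromean rings; since every proper sublink of a Brunnian link is an unlink, a finite-type invariant of order $\leq n-1$ cannot distinguish $S^3_{(1,\ldots,1)}(L_n)$ from $S^3$, and the paper then identifies $S^3_{(1,\ldots,1)}(L_n)$ with $+1$-surgery on the iterated Whitehead double $Wh^{(n-3)}(T_{2,3})$ and invokes Hedden's computation of $\tau$ of iterated Whitehead doubles to see that the $d$-invariant there is nonzero. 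You instead take $m+1$ zero-linking parallel copies of the separating curve isotopic to $K$ (an algebraically split but non-Brunnian link, so you correctly use the full alternating-sum definition rather than the Brunnian shortcut), read the order-$\leq m$ condition as the vanishing of the $(m+1)$-st finite difference of $n \mapsto d(Y_{1/n}(K))$, and then play the resulting polynomiality against the boundedness from Theorem~\ref{thm:family}/Proposition~\ref{prop:separating} and the single value $d(S^3_{+1}(T_{2,3})) = -2 \neq 0 = d(S^3)$. Your route buys self-containedness: it needs no input beyond the Lickorish trick, the paper's own boundedness theorem, and one classical $d$-invariant computation, whereas the paper leans on Hedden's theorem (but in exchange produces explicit Brunnian examples witnessing the failure at every order). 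One small point of care in your write-up: if one adopts the version of Ohtsuki's definition that only quantifies over algebraically split links in $S^3$, you do not directly get $\Delta^{m+1}f(n)=0$ for all $n$, but you do get $\Delta^{N}f(0)=0$ for all $N \geq m+1$ by taking $N$ parallel copies, and Newton's forward-difference formula then gives polynomiality of $f(n) = d(S^3_{1/n}(K))$ for $n \geq 0$, which is all your bounded-hence-constant contradiction requires.
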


We can also sometimes establish much stronger bounds on the $d$-invariants than Theorem~\ref{thm:lineargrowth}.  One example is the following theorem.
\begin{restatable}{thm}{family}\label{thm:family}
Let $A=\{\phi_1,\ldots, \phi_k\}$ be a collection of separating Dehn twists and bounding pair maps on a parameterized embedded surface $S$ of genus $g$ in a homology sphere $Y$. Consider the subset $A^{\prime}$ of $\mathrm{Mod}_g$  consisting of length at most $q$ products of powers of elements of $A$, i.e. elements of the form $\phi_{i_1}^{n_1} \ldots \phi_{i_q}^{n_q}$. Then, the set $\{d(Y_{a}) \mid a \in A^{\prime} \}$ is bounded.  
\end{restatable}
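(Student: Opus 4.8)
The plan is to realize every $Y_{S,a}$ in the family as Dehn surgery on a \emph{single, fixed} link in $Y$, and then to read off the $d$-invariant from the Heegaard Floer surgery formula in a way that is uniformly bounded over the family.

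First I would reduce to one word shape. There are only finitely many sequences $(i_1,\dots,i_q)\in\{1,\dots,k\}^{q}$, so it suffices to bound $d$ over the subfamily $\{\,Y_{S,\phi_{i_1}^{n_1}\cdots\phi_{i_q}^{n_q}}\,\}$ arising from one of them. Each $\phi_{i_j}$ is a separating twist $T_{c}$ or a bounding pair map $T_{a}T_{b}^{-1}$ with $a,b$ disjoint, so $\phi_{i_j}^{n_j}$ is $T_{c}^{n_j}$ or $T_{a}^{n_j}T_{b}^{-n_j}$. Hence $a=\phi_{i_1}^{n_1}\cdots\phi_{i_q}^{n_q}$ is a product $T_{\delta_1}^{e_1}\cdots T_{\delta_N}^{e_N}$ with $N\le 2q$, in which the curves $\delta_1,\dots,\delta_N\subset S$ and the assignment of each exponent slot $e_\ell$ to one of the variables $\pm n_j$ are fixed, independent of $\vec n$. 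Applying the Lickorish trick (Lemma~\ref{lem:twist-surgery}) levelwise in a collar of $S$ inside $Y$, we obtain $Y_{S,a}$ by surgery on the \emph{fixed} link $\mathcal L=\delta_1'\cup\cdots\cup\delta_N'\subset Y$, where the only dependence on $a$ is that the $\ell$-th component is ``$1/e_\ell$-twisted'' relative to its (fixed, integral) surface framing. It is convenient to group $\mathcal L$ into the sublinks $L_1,\dots,L_q$ coming from the individual factors $\phi_{i_j}$: a knot $L_j=c_j'$ for a separating twist, a two-component link $L_j=a_j'\cup b_j'$ for a bounding pair map. Each $L_j$ bounds (respectively cobounds) a subsurface $\Sigma_j\subset S$ of genus at most $\lfloor g/2\rfloor$, and the $\Sigma_j$ lie at distinct collar levels; using that $Y_{S,a}$ is a homology sphere for \emph{every} $\vec n$, one checks that the surface framings are exactly the integers making this hold and that the $L_j$ are pairwise unlinked in $Y$.

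The heart of the proof is a uniform Floer-theoretic estimate. \emph{Claim:} if $Z$ is an integral homology sphere and $L\subset Z$ is a piece of the above type bounding/cobounding a genus-$h$ surface with its prescribed surface framing, then the $d$-invariant of the $1/m$-twist surgery on the knot $L$ --- or the $(1/m,-1/m)$-twist surgery on the bounding-pair link $L$ --- differs from $d(Z)$ by an amount bounded solely in terms of $h$ and $d(Z)$, uniformly in $m\in\mathbb Z\setminus\{0\}$. For a knot this is the rational surgery formula: $d(Z_{1/m}(K))=d(Z)-2V_0(K\subset Z)$ for $m>0$ and $d(Z)+2V_0(\overline K\subset\overline Z)$ for $m<0$, combined with the genus bound $V_0\le\lceil h/2\rceil$ when $Z=S^3$ (and its analogue with a $d(Z)$-dependent correction in general, via the adjunction-type inequalities for $d$-invariants applied to the Seifert surface capped off in the surgery cobordism); in particular $d$ takes only finitely many values over all $m$. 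For a bounding pair one argues in the same spirit with the link surgery (mapping cone) formula of Manolescu--Ozsv\'ath, the point again being that the ``large surgery'' complexes of sublinks of $L$ that enter the cone are $m$-independent. Granting the claim, I conclude by induction on $q$: surgering the $L_j$ one at a time, at the $j$-th step the intermediate homology sphere $Z_{j-1}$ has $d(Z_{j-1})$ in a set already shown to be finite, the piece $L_j$ still cobounds its genus-$\le\lfloor g/2\rfloor$ surface disjointly from the earlier surgeries, and the claim bounds $d(Z_j)$ in terms of $d(Z_{j-1})$ and $g$ uniformly in $n_j$. After $q$ steps the set $\{\,d(Y_{S,a})\,\}$ is finite, hence bounded.

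The main obstacle is the uniform Floer estimate, and within it the bounding-pair case: one needs a genus-type bound on the correction terms entering the link surgery mapping cone that is uniform under $(1/m,-1/m)$-twisting, which calls for a hands-on analysis of the cone rather than a black-box appeal to the knot formula. A secondary point requiring care is pinning down the surface framings on the $\delta_\ell'$ --- these are generally not the Seifert framings, and it is precisely the way they interlock with the mutual linking numbers that makes each member of the family an integral homology sphere --- together with the genus estimate $V_0\le\lceil h/2\rceil$ in the ambient $S^3$ and its analogue over a general homology sphere.
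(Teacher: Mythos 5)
Your reduction to a single word shape, the realization of every $Y_{S,a}$ as surgery on one fixed link in a collar of $S$ with only the framings depending on $\vec n$, and the induction over the levels are all sound and match the skeleton of the paper's argument. However, the heart of your proof --- the ``uniform Floer-theoretic estimate'' --- is only actually established in the separating-twist case, where the rational surgery formula does show that $d(Z_{-1/m}(K))$ takes at most two values over all $m$ and $V_0$ is controlled by the genus. For a bounding pair map you explicitly defer to an unproven claim about the Manolescu--Ozsv\'ath link surgery mapping cone for coefficients of the form $(\ell-1/m,\ell+1/m)$, and you yourself flag this as ``the main obstacle.'' That obstacle is precisely the content of the theorem: a single bounding pair map iterated $m$ times is exactly the case where the linear bound of Theorem~\ref{thm:lineargrowth} grows without control, and no uniform-in-$m$ statement for such link surgeries is available off the shelf. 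As written, the proposal has a genuine gap at its central step. A smaller inaccuracy: the sublinks $L_j$ are \emph{not} pairwise unlinked in general --- the two components of a bounding pair have linking number $\ell_j$ and distinct bounding pairs can link with multiplicity $\eta_{ij}\neq 0$; only the separating-curve components are unlinked from everything. This is not load-bearing for your induction, but it matters for any argument that manipulates the link globally.

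For comparison, the paper avoids the mapping cone entirely. The key tool is Lemma~\ref{lem:surgery-convert}: if $L'\subset S^3$ has the same pairwise linking numbers as $L\subset Y$, then $|d(Y_{\vec r}(L))-d(S^3_{\vec r}(L'))|\le C$ uniformly in $\vec r$, because $(Y,L)$ can be carried to $(S^3,L')$ by a fixed sequence of Torelli surgeries (via Matveev/Murakami--Nakanishi) to which the already-established single-element bounds of Proposition~\ref{prop:onetorellielt} apply. One then chooses the model link $\mathbb{T}$ explicitly --- an unlink for the separating pieces and $(2,2\ell_i)$-cables with meridional ``earrings'' for the bounding pairs, which converts the $\pm 1/n$ coefficients into integral framings $\pm n$ on the earrings --- and Lemma~\ref{lem:cable} shows every surgery in the family on $\mathbb{T}$ is exactly $S^3$. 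The $d$-invariants of the original family are then bounded by the single constant $C$. If you want to salvage your route, the bounding-pair step would need a genuine proof that the $d$-invariant of $(\ell-1/m,\ell+1/m)$-surgery on an annulus-cobounding pair is uniformly controlled; the earring-plus-cable trick is one concrete way to do exactly that without touching the link surgery formula.
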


Theorem~\ref{thm:family} further implies that if $H$ is an abelian group generated by separating twists and bounding pair maps, then $\{d(Y_\phi) \mid \phi \in H\}$ is bounded.  To incentivize more interest in these questions, we pose the following optimistic conjecture:

\begin{conj}
Let $S$ be a genus $g$ surface in a homology sphere $Y$ and let $H$ be an abelian subgroup of Torelli.  Then $\{d(Y_\phi) \mid \phi \in H\}$ is bounded.      
\end{conj}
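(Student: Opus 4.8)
The plan is to deduce the conjecture from Theorem~\ref{thm:family} together with the structure theory of abelian subgroups of mapping class groups. Since the Torelli group is torsion free, an abelian subgroup $H\le\mathcal{I}_g$ is free abelian of finite rank (abelian subgroups of mapping class groups are finitely generated, by Birman--Lubotzky--McCarthy). Replacing $H$ by a finite-index subgroup $H'$, we may assume $H'$ is pure with respect to a common canonical reduction system: there are a multicurve $\sigma=\{c_1,\dots,c_s\}$ and pairwise disjoint essential subsurfaces $R_1,\dots,R_t$, all disjoint from $\sigma$, together with pseudo-Anosov maps $\psi_j$ of $R_j$, such that every $\phi\in H'$ has the form $\phi=T_{c_1}^{a_1}\cdots T_{c_s}^{a_s}\,\psi_1^{b_1}\cdots\psi_t^{b_t}$ with $a_i,b_j\in\Z$, and all of these mapping classes commute. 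Passing to $H'$ costs nothing here: if $H=\bigsqcup_k\gamma_kH'$ with each $\gamma_k\in\mathcal{I}_g$ fixed, then for $\phi\in H'$ the manifold $Y_{S,\gamma_k\phi}$ is obtained from $Y_{S,\phi}$ by Torelli surgery along $S$ with respect to either $\gamma_k$ or a conjugate of $\gamma_k$; since $S\subset Y_{S,\phi}$ is again an embedded genus-$g$ surface in a homology sphere and the infinite generating set of all separating twists and bounding pair maps is conjugation invariant, Theorem~\ref{thm:lineargrowth} bounds $|d(Y_{S,\gamma_k\phi})-d(Y_{S,\phi})|$ by a constant depending only on $\|\gamma_k\|_\infty$ and $g$. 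Hence boundedness of $\{d(Y_\phi):\phi\in H'\}$ implies boundedness of $\{d(Y_\phi):\phi\in H\}$.

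It remains to bound $\{d(Y_\phi):\phi\in H'\}$, and the twist directions are exactly where Theorem~\ref{thm:family} applies. Suppose first $t=0$, so $H'\le\langle T_{c_1},\dots,T_{c_s}\rangle$ with the $c_i$ disjoint. Since each $\phi=\prod_iT_{c_i}^{a_i}\in H'$ acts trivially on $H_1(\Sigma_g)$, the factors with $[c_i]=0$ are separating twists while the factors whose homology classes agree (and are nonzero) have exponents summing to zero, and therefore reorganize into powers of bounding pair maps $T_{c_i}T_{c_j}^{-1}$. Thus every $\phi\in H'$ is a product of powers of at most $s\le 3g-3$ fixed separating twists and bounding pair maps, so Theorem~\ref{thm:family} (with $q=s$) gives the bound; equivalently, via the Lickorish trick $Y_\phi$ is surgery with coefficients $\pm1/a_i$ on a single fixed link, whose $d$-invariants are controlled by finitely many $V_0$-type quantities. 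When $t>0$, each $\phi$ still has a twist part contributing a $\pm1/a_i$-surgery on a fixed link together with a pseudo-Anosov part $\psi_1^{b_1}\cdots\psi_t^{b_t}$; disentangling how the two contributions combine when they are homologically linked needs some care, but one expects the twist contribution to stay bounded by surgery-formula arguments, reducing the problem to the pseudo-Anosov pieces.

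The genuine difficulty, and the step I expect to be the main obstacle, is already present when $H$ is infinite cyclic generated by a single pseudo-Anosov $\psi$ in the Torelli group: one must bound $\{d(Y_{S,\psi^n}):n\in\Z\}$. Neither of our tools suffices. Writing $\psi$ as a word in separating twists and bounding pair maps makes $\psi^n$ a word whose length grows linearly in $n$, so Theorem~\ref{thm:family} does not apply; and the telescoping relation $Y_{S,\psi^{n+1}}=(Y_{S,\psi^n})_{S,\psi}$ together with Theorem~\ref{thm:lineargrowth} only yields the linear bound $|d(Y_{S,\psi^n})-d(Y)|\le C|n|$. I see two plausible lines of attack. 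The first is geometric: the Lickorish trick presents $Y_{S,\psi^n}$ as surgery with $\pm1$ coefficients on a periodic family of links --- roughly, $n$ stacked copies of a fixed tangle --- and one would try to recognize these manifolds as having uniformly bounded complexity, for instance as graph manifolds, or as a fixed bordered piece glued to an $n$-fold cyclic object, for which an a priori bound on $d$ can be extracted. The second is algebraic: in bordered Heegaard Floer homology $Y_{S,\psi^n}$ is computed by pairing fixed handlebody type-$D$ modules with the $n$-th iterate of the type-$DA$ bimodule of $\psi$, and one would try to show that iterating this bimodule shifts the Maslov grading of the class detecting $d$ by only a bounded amount --- a Floer-theoretic analogue of the assertion that the relevant ``spectral radius'' of $\psi$ equals $1$. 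Making either line precise appears to require genuinely new input, which is why we state the result only as a conjecture.
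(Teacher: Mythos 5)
This statement is posed in the paper as an open conjecture: there is no proof of it anywhere in the text, and the only case the authors actually establish is the remark following Theorem~\ref{thm:family}, namely that $\{d(Y_\phi)\}$ is bounded when $H$ is an abelian group generated by separating twists and bounding pair maps. Your reduction recovers essentially that same case by a different route. The Birman--Lubotzky--McCarthy structure theory, the finite-index reduction (which is fine: $Y_{S,\gamma_k\phi}$ is a further Torelli surgery on $Y_{S,\phi}$ along a parallel copy of $S$, and $\|\cdot\|_\infty$ is conjugation-invariant, so Theorem~\ref{thm:lineargrowth} gives a uniform correction over the finitely many cosets), and the $t=0$ case all hold together, though your justification that a Torelli multitwist reorganizes into separating twists and bounding pair maps is too quick -- pairing up curves with equal homology classes does not handle general linear dependencies among the $[c_i]$; the correct statement is Vautaw's theorem that a multitwist in $\mathcal{I}_g$ is a product of separating twists and bounding pair maps supported on the same multicurve. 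With that reference in place, Theorem~\ref{thm:family} does close the $t=0$ case.

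The genuine gap is exactly the one you name: the case $t>0$, already for $H=\langle\psi\rangle$ with $\psi$ a pseudo-Anosov (or partial pseudo-Anosov) Torelli element. Nothing in the paper bounds $\{d(Y_{S,\psi^n})\}$ there -- Theorem~\ref{thm:family} requires a word of bounded length in powers of a \emph{fixed} finite set of twists, which $\psi^n$ is not, and telescoping Theorem~\ref{thm:lineargrowth} only gives a bound linear in $n$. Your two suggested strategies (recognizing $Y_{S,\psi^n}$ as surgery on a periodic link family of bounded complexity, or controlling the Maslov grading under iteration of the type-$DA$ bimodule of $\psi$) are plausible research directions but are not carried out, and neither follows from the tools in the paper. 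So what you have written is an accurate analysis of why the conjecture is open and a correct proof of the special case the authors already claim, not a proof of the conjecture -- consistent with your own closing admission and with the authors' decision to state it only as a conjecture.
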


Underneath these results is the fact that, in general, the result of taking the connected sum of $Y_{\phi}$ and $Y_{\psi}$ is not homeomorphic to $Y_{\phi \circ \psi}$. 
However, the Casson invariant is ``almost'' additive when $S$ is a Heegaard surface, as the Morita formula (Theorem 4.3 in \cite{moritacasson}) gives: 
\begin{equation}
    \label{eq:moritaformula}
\lambda(Y_{\phi \circ \psi}) = \lambda(Y_\phi) + \lambda(Y_\psi) + 2\delta(\phi, \psi), 
\end{equation}

\noindent where $\phi, \psi \in \mathcal{I}_g$ and $\delta(\phi, \psi)$ is a term depending on the image under the Johnson homomorphism of $\phi, \psi$ considered as elements of $\mathcal{I}_{g,1}$ (and hence independent of the choice of embedding of the Heegaard surface). This formula was used to prove the aforementioned theorem of Broaddus, Farb, and Putman.  

While the Casson invariant can be extended to all closed, oriented three-manifolds \cite{walker, lescop}, we disprove the existence of a certain extension of the Morita formula to rational homology spheres.  Just as integral homology spheres can be constructed as $Y_{S, \phi}$ where $\phi \in \mathcal{I}_g$, for $S$ a Heegaard surface, we can characterize rational homology spheres using specific subgroups of $\modg$ in the following way.  Recall that for an integer $k \geq 2$, the level $k$ congruence subgroup $\modg(k)$ consists of the mapping classes that act trivially on the homology of $\Sigma_g$ with $\mathbb{Z}/k\mathbb{Z}$ coefficients, and one important property the congruence subgroups have distinct from those of $\mathcal{I}_g$ is that each level k congruence subgroup is finite index in $\modg$.  By Theorem 1.1 in \cite{pitschriba} (and related work in \cite{cooper}) every rational homology sphere has gluing map in the level $p$ congruence subgroup $\modg(p)$ for some prime number $p$. 

Therefore, it is a natural question to ask whether the Morita formula for the Casson invariant of integral homology spheres corresponding to products of elements in $\mathcal{I}_g$ generalizes to a formula for the Casson-Walker invariant of rational homology spheres corresponding to products in $\modg(k)$ for each fixed $k$ (note that again by Theorem 1.1 in \cite{pitschriba} this is exactly rational homology spheres $Y$ with $|H_1(Y)|=n$ where $k$ divides $n-1$ or $n+1$). By Lemma 5.6 in \cite{cooper}, we know every element of $\modg(k)$ factors as a product of a Torelli element and a product of $k^{th}$ powers of Dehn twists. In the following theorem, we use elements written in this form to show there cannot be a Morita formula for homology spheres with gluing map in $\modg(k)$ for any $k>1$.

\begin{restatable}{thm}{brokenmorita}\label{thm:brokenmorita}
Let $k$ be an arbitrary non-zero integer.  There exists a Heegaard surface $S$ in $S^3$, Torelli elements $\psi, \eta$, and a non-separating Dehn twist $\phi$ so that: 
\begin{enumerate}
\item $S^3_{\phi^k}$ is an integer homology sphere; 
\item There is an automorphism of $S$ taking $\psi$ to $\eta$ and $\phi$ to $\phi$; 
\item $\lambda(S^3_{\psi \phi^k}) - \lambda(S^3_{\psi}) - \lambda(S^3_{\phi^k})$ is not equal to $\lambda(S^3_{\eta \phi^k}) - \lambda(S^3_{\eta}) - \lambda(S^3_{\phi^k})$.  
\end{enumerate}
Consequently, there is no Morita formula for $\modg(k)$, that is, a formula for the Casson (or Casson-Walker) invariant of $Y_{S, \phi \circ \psi}$ where $ \phi, \psi \in \modg(k)$ that is independent of the embedding of a Heegaard surface S.  
\end{restatable}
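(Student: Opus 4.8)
The plan is to exhibit an explicit configuration on a Heegaard surface $S$ for $S^3$ where the ``correction term'' $2\delta(\phi,\psi)$ in the Morita formula genuinely depends on the embedding, even though the abstract mapping classes are related by a surface automorphism. The key idea is that the Morita formula \eqref{eq:moritaformula} works for $\mathcal{I}_g$ precisely because $\delta(\phi,\psi)$ is computed from Johnson homomorphism data, which is intrinsic to the surface group; once we allow a non-separating Dehn twist $\phi$, which lies in $\modg(d)$ after taking a $d$-th power but not in $\mathcal{I}_g$, the analogous error term must see how the twisting curve sits relative to the handlebodies, i.e.\ it cannot be embedding-independent.

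First I would set up the geometry: fix the genus $g$ Heegaard surface $S \subset S^3$ and choose a non-separating curve $c$ bounding a disk in one handlebody (so that $\phi = T_c$ and hence $\phi^d = T_c^d$ gives $S^3_{\phi^d} = S^3$, an integer homology sphere, establishing (1) — here I would invoke the Lickorish trick / Lemma~\ref{lem:twist-surgery} to recognize $S^3_{\phi^d}$ as $1/d$-surgery on an unknot). Next I would choose a Torelli element $\psi$ supported in a subsurface disjoint from $c$, so that $S^3_{\psi\phi^d}$ is recognizable (e.g.\ as a connected sum or a surgery description where the two pieces of twisting are geometrically unlinked) and $\lambda(S^3_{\psi\phi^d}) = \lambda(S^3_\psi) + \lambda(S^3_{\phi^d}) + (\text{correction} = 0)$. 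Then I would produce a surface automorphism $h$ of $S$ with $h(\phi) = \phi$ (i.e.\ $h$ fixes $c$ up to isotopy) but which drags $\psi$ to an element $\eta = h\psi h^{-1}$ supported in a subsurface that \emph{links} $c$ in a controlled way, so that the corresponding correction term is nonzero — for instance by arranging $\eta$ to produce a connected sum with a lens-space-like or Brieskorn-type summand whose Casson invariant I can compute via surgery formulas. Establishing (2) is then immediate from the construction of $h$, and (3) reduces to two Casson invariant computations that differ.

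The main obstacle will be step three: choosing $\psi$ and the automorphism $h$ so that the two resulting closed $3$-manifolds $S^3_{\psi\phi^d}$ and $S^3_{\eta\phi^d}$ have \emph{provably different} Casson invariants while keeping $\lambda(S^3_\psi) = \lambda(S^3_\eta)$ (the latter being forced automatically since $\psi$ and $\eta$ are conjugate by a surface diffeomorphism, hence $S^3_\psi \cong S^3_\eta$). So the entire content is that the joint term $\lambda(S^3_{\psi\phi^d}) - \lambda(S^3_\psi) - \lambda(S^3_{\phi^d})$ is not conjugation-invariant, which amounts to finding a Torelli curve system that can be isotoped across the non-separating twisting curve to change how the surgery curves link. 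I would try to make both $3$-manifolds explicit as surgeries on links in $S^3$ (using the Seifert-surface-times-$I$ model from Corollary~\ref{cor:surgery-bound} for the Torelli twists, together with the framed unknot for $\phi^d$) and then compute $\lambda$ directly from the surgery formula for the Casson--Walker invariant, choosing the linking data on the two sides to differ by a nonzero multiple. The final sentence of the theorem is then a formal consequence: any putative Morita formula for $\modg(d)$ would have to assign the same value to the triple $(\psi, \phi^d)$ and its image under the automorphism $h$, contradicting (3).
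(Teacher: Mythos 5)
Your overall strategy is the one the paper uses: pick a non-separating curve $C$ on a Heegaard surface that is unknotted with surface framing equal to the Seifert framing (so $S^3_{\phi^d}=S^3$), pick two separating twists related by a surface automorphism fixing $C$, and detect the failure of embedding-independence by computing Casson invariants of explicit surgeries. However, two points in your write-up need repair. First, and most seriously, your claim that $\lambda(S^3_\psi)=\lambda(S^3_\eta)$ is ``forced automatically since $\psi$ and $\eta$ are conjugate by a surface diffeomorphism, hence $S^3_\psi\cong S^3_\eta$'' is false: conjugating the regluing map by a mapping class $h$ of $S$ changes the resulting $3$-manifold unless $h$ extends suitably over the handlebodies. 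Indeed, in the paper's own example the conjugating map is a Dehn twist along a curve $M$ (so $\eta = h\psi h^{-1}$ by Proposition~\ref{prop:twistatwist}), and yet $S^3_\psi=\Sigma(2,3,5)$ while $S^3_\eta=S^3$; these have Casson invariants $1$ and $0$. Imposing $S^3_\psi\cong S^3_\eta$ is neither needed for the theorem nor satisfied by the example, and assuming it without computation would corrupt your verification of item (3). Relatedly, ``supported in a subsurface disjoint from $c$'' does not imply ``geometrically unlinked from $c$ in $S^3$'': since $h$ fixes $c$, the support of $\eta$ is \emph{also} disjoint from $c$ on the surface, so the asymmetry you need must come entirely from how the two curve systems sit in the ambient $S^3$, not from their position on $S$.

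Second, the actual content of the theorem is the explicit example, which your proposal defers. For comparison, the paper takes $\psi,\eta$ to be twists on separating curves $K,L$ on $S$ that are the pretzel knots $P(1,1,1)=T_{2,3}$ and $P(1,-1,1)=U$ in $S^3$, related by a twist along a curve $M$ fixing the unknotted non-separating curve $C$; both $K$ and $L$ have linking number zero with a pushoff of $C$, but $-1/d$-surgery on $C$ carries $K$ to $P(1,1,1+2d)$ while leaving $L$ unknotted. The Casson surgery formula applied to $\Delta_{P(1,1,1+2d)}$ then gives correction terms $d$ versus $0$. Your plan of making $S^3_{\eta\phi^d}$ a connected sum with a Brieskorn summand could in principle also work, but until you exhibit curves with the required properties and carry out the two Alexander-polynomial computations, item (3) is not established.
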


\begin{rmk} This is quite different from the Perron conjecture \cite{perron}, which tries to define a Casson invariant from an element $\psi \cdot \prod_i \phi_i^k \in Mod_g(k)$ by reducing $\lambda(S^3_\psi)$ mod $k$.
\end{rmk}

We end this introduction with some discussion about the Johnson kernel.  The kernel of the Johnson homomorphism, denoted $\mathcal{K}_g$, is exactly the subgroup of $\mathcal{I}_g$ generated by separating twists, and Morita showed every integral homology sphere is actually $Y_{\phi}$ for some $\phi \in \mathcal{K}_g$. Thus, the Casson invariant is additive on $\mathcal{K}_g$. 

\begin{rmk}
The same argument as in Corollary \ref{cor:cayleygraph} shows that the Johnson kernel has infinite diameter with respect to the generating set consisting of all separating twists.
\end{rmk}


Though we do not have a Morita-type formula for the $d$-invariant of the result of gluing by a product of mapping classes, we observe the following consequence of combining \eqref{eq:HFandcasson} with the Morita formula \eqref{eq:moritaformula}.

\begin{rmk}Let $\phi, \psi \in \mathcal{K}_g$. Then 
\[d(Y_{\phi \circ \psi}) = d(Y_{\phi})+d(Y_{\psi})+2(\chi(HF_{red}(Y_{\phi \circ \psi}))-\chi(HF_{red}(Y_{\phi}))-\chi(HF_{red}(Y_{\psi}))).\]
\end{rmk}

When any of the constituent $3$-manifolds $Y_{\phi \circ \psi}$, $Y_{\phi}$, and $Y_{\psi}$ have torsion-free $HF^{-}$, the formula becomes much simpler. However, it is an open question whether there are any such integral homology spheres other than $S^3$ and connected sums of the Poincar\'{e} homology sphere.  Similarly, it is natural to ask whether there is any subgroup of $\mathcal{K}_g$ on which the $d$-invariant is a group homomorphism, but the question is beyond the scope of this paper. Note that it is not even a quasimorphism on the Johnson kernel by applying the proof of Corollary \ref{cor:cayleygraph} along with the fact that every homology sphere has a gluing map in the Johnson kernel \cite{moritacasson}.

\begin{rmk}
It seems likely that one could do analogous work with the Torelli group (or at least Johnson kernel) for surfaces in other three-manifolds.  For example, one could consider the collection of $d$-invariants of Torelli surgery on a Heegaard splitting of a rational homology sphere or the totally twisted $d$-invariant \cite{BehrensGolla} applied to the mapping torus of a Torelli element.   
\end{rmk}




\subsection{Outline} Section \ref{sec:background} lays the technical foundations for this work, followed by proofs of Theorem~\ref{thm:lineargrowth} and Theorem~\ref{thm:cassonfail}  in Section \ref{sec:proofs}. Section \ref{sec:bounded} details the proof of the boundedness result Theorem~\ref{thm:family}. Finally, in Section~\ref{sec:rational}, we consider the extension to rational homology spheres and prove Theorem~\ref{thm:brokenmorita}.  

\subsection{Acknowledgments} MK was partially supported by NSF Award No. DMS-2103142. TL was partially supported by NSF grant DMS-1745583. TL benefited greatly from being in residence at SL Math during Fall 2022 (NSF grant DMS-1440140) where some of this work took place.  This material is based upon work supported by the National Science Foundation under Grant No. DMS-1928930, while MK and TL were in residence at SL Math in Berkeley, California, during the Summer Research in Mathematics program in 2023.  The authors would like to thank Ian Agol, Tara Brendle, Benson Farb, Jennifer Hom, Brendan Owens, and Sam Taylor for many fruitful discussions and \.{I}nan\c{c} Baykur for comments on an earlier draft. Special thanks to Andrew Putman for many conversations about this work and for introducing the second author to the Torelli group in a wonderful special topics class some years ago. Finally, thank you to the reviewer for their thoughtful comments.

\section{Background}\label{sec:background}

As the intended audience for this work includes mathematicians interested in mapping class groups, 3-manifold topology, and Heegaard Floer homology, we have included a brief survey of some  relevant background. Throughout this article, we work with a fixed orientable genus $g$ surface $\Sigma_g$.  By abuse of notation, we will say that $S$ is an embedded genus $g$ surface in a homology sphere $Y$ to mean we fix an embedding $f: \Sigma_g \to Y$ and $S = f(\Sigma_g)$ as in the introduction; consequently, an element of $\modg$, the mapping class group of $\Sigma_g$, induces a well-defined mapping class of $S$.  

\subsection{The Torelli group}
Recall that $\mathrm{Mod}_g$ is generated by Dehn twists along simple closed curves in $\Sigma_g$.  In the interest of matching the sign conventions in \cite{farbmargalit}, we will consider a left-handed twist along a simple closed curve to be a positive twist. This corresponds to the standard right handed orientation of $S^1$ and the parametrization of a Dehn twist inside the annulus as $T_c: S^1 \times [0,1] \rightarrow S^1 \times [0,1]$ with $T_c(\theta, t)=(\theta+2\pi t, t)$ .

For the Torelli group, $\mathcal{I}_g$ we can also write down a generating set, but we need more terminology first.  First, a {\em separating twist} is a Dehn twist along a separating curve, which lives in the Torelli group.  Next, we say $\phi \in \mathrm{Mod}_g$ is a {\em bounding pair map} if it can be written as the product of a positive twist $T_a$ on a curve $a \subset \Sigma_g$ and a negative twist $T_b^{-1}$ on a curve $b \subset \Sigma_g$ where $a$ and $b$ cobound a subsurface of $\Sigma_g$.  As discussed above, for $g = 2$, Mess proved that $\mathcal{I}_g$ is an infinitely generated free group which has a collection of separating twists as generators; for $g > 2$, bounding pair maps and separating twists generate the Torelli group $\mathcal{I}_g$ \cite{birman} \cite{powell}.  Note that if we allow one curve in a bounding pair map to be trivial, then every separating twist can technically be viewed as a bounding pair map.\footnote{For the purposes of exposition and to align with perspectives in geometric group theory, we will prove results for separating twists and bounding pair maps separately, when often it might seem more succinct to give a single proof for this more general notion of bounding pair maps.  We feel it is beneficial for the reader to study separating twists separately as a warm-up to build intuition.} By work in \cite{johnson1985structure}, $\mathcal{I}_g$ is finitely generated when $g>2$.  The subgroup generated by separating twists is the {\em Johnson kernel} $\mathcal{K}_g$.  

From now on, we will write $\gamma$ to denote either a simple closed curve when we are performing a Dehn twist along a single curve or a curve with two simple closed components if we are referring to a bounding pair map and we will write  $\gamma = \gamma_1 \cup \gamma_2$.  In the latter case, we will use the  convention that $\gamma$ is ordered so that a positive bounding pair map refers to a positive Dehn twist along $\gamma_1$ and a negative Dehn twist along $\gamma_2$. 

We quickly mention two standard results that allow us to easily perform computations and analyze Dehn twists along more complicated curves.

\begin{prop}[E.g. Fact 3.7 in \cite{farbmargalit}]\label{prop:twistatwist}
For any $f\in \mathrm{Mod}_g$ and any isotopy class $a$ of simple closed curves in $S$ we have
\[ T_{f(a)}=fT_af^{-1}\]
\end{prop}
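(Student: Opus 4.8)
The plan is to prove this via the \emph{change of coordinates principle}: a Dehn twist is a mapping class supported in an annular neighborhood of the curve, and it is determined (up to isotopy) by that neighborhood together with a chosen orientation-preserving parametrization of it. Concretely, I would fix a representative homeomorphism $\varphi$ of $f$ and a representative $\alpha$ of the isotopy class $a$, choose a closed regular neighborhood $N$ of $\alpha$ and an orientation-preserving homeomorphism $\psi\colon S^1\times[0,1]\to N$, and let $t_{\psi}$ denote the homeomorphism of $S$ that equals $\psi\circ T_c\circ\psi^{-1}$ on $N$ (with $T_c$ the model twist $T_c(\theta,t)=(\theta+2\pi t,t)$) and the identity off $N$. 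By definition $[t_\psi]=T_a$, and the very definition of the mapping class $T_a$ rests on the standard fact that $[t_\psi]$ is insensitive, up to isotopy, to the choices of $N$ and of the orientation-preserving parametrization $\psi$ (uniqueness of regular neighborhoods up to ambient isotopy, together with the computation of the annulus mapping class group); I would take this well-definedness as given.

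First I would observe that $\varphi(N)$ is a closed regular neighborhood of $\varphi(\alpha)$, hence of the isotopy class $f(a)$, and that $\varphi\circ\psi\colon S^1\times[0,1]\to\varphi(N)$ is an orientation-preserving parametrization of it — here using that $\varphi$ is orientation-preserving because $f\in\modg$. Then I would compute directly: the homeomorphism $\varphi\circ t_\psi\circ\varphi^{-1}$ is the identity outside $\varphi(N)$, and on $\varphi(N)$ it equals $(\varphi\psi)\circ T_c\circ(\varphi\psi)^{-1}$. Thus $\varphi t_\psi \varphi^{-1}$ is precisely the homeomorphism $t_{\varphi\psi}$ built from the annular neighborhood $\varphi(N)$ with the parametrization $\varphi\psi$ of the curve $f(a)$. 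Passing to mapping classes, $fT_af^{-1}=[\varphi t_\psi\varphi^{-1}]=[t_{\varphi\psi}]=T_{f(a)}$, where the last equality invokes the well-definedness recalled above.

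The one point requiring care — and the only place the hypothesis $f\in\modg$ (rather than an arbitrary self-homeomorphism) is used — is the sign of the twist: one must check that conjugating a positive (left-handed, in the paper's convention) twist yields a positive twist along the image curve rather than its inverse. This is exactly what the orientation bookkeeping above secures: since $\varphi$ is orientation-preserving and $\psi$ was chosen orientation-preserving, $\varphi\psi$ is an orientation-preserving parametrization of $\varphi(N)$, so $t_{\varphi\psi}$ is the \emph{positive} twist about $f(a)$ in the chosen convention; had $\varphi$ reversed orientation one would instead obtain $T_{f(a)}^{-1}$. I expect this orientation check to be the main (and essentially the only) obstacle; the remainder is the routine annulus-supported computation.
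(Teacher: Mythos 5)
Your argument is correct and is precisely the standard proof of this fact; the paper itself gives no proof, simply citing Fact~3.7 of Farb--Margalit, and your annulus-supported conjugation computation (together with the orientation bookkeeping ensuring the conjugate is the positive rather than negative twist) is the argument given in that reference. Nothing is missing.
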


\begin{prop}\label{prop:conjugate}
Dehn twists along any two non-separating simple closed curves on $\Sigma_g$ are conjugate.  Dehn twists along separating simple closed curves $\gamma_1, \gamma_2$ on $\Sigma_g$ are conjugate if and only if $\Sigma_g - \gamma_1$ and $\Sigma_g - \gamma_2$ are homeomorphic. 
\end{prop}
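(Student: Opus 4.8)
The statement to prove is \Cref{prop:conjugate}: Dehn twists along any two non-separating simple closed curves on $\Sigma_g$ are conjugate, and Dehn twists along separating simple closed curves $\gamma_1, \gamma_2$ are conjugate if and only if $\Sigma_g - \gamma_1$ and $\Sigma_g - \gamma_2$ are homeomorphic.

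The plan is to reduce everything to the change-of-coordinates principle for surfaces together with \Cref{prop:twistatwist}. First I would observe that \Cref{prop:twistatwist} immediately gives one direction of each claim: if there is a mapping class $f \in \mathrm{Mod}_g$ with $f(\gamma_1) = \gamma_2$ (as isotopy classes), then $T_{\gamma_2} = T_{f(\gamma_1)} = f T_{\gamma_1} f^{-1}$, so the twists are conjugate. Hence both statements follow once we understand when two simple closed curves lie in the same $\mathrm{Mod}_g$-orbit. For a nonseparating simple closed curve this is the classical fact (the change-of-coordinates principle, e.g. in \cite{farbmargalit}) that $\mathrm{Mod}_g$ acts transitively on isotopy classes of nonseparating simple closed curves: cutting along any nonseparating curve yields a connected genus $g-1$ surface with two boundary components, and any homeomorphism between two such cut surfaces matching the boundary identifications glues up to a homeomorphism of $\Sigma_g$ carrying one curve to the other. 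This proves the first sentence.

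For the separating case, the same change-of-coordinates principle tells us that $f(\gamma_1) = \gamma_2$ for some $f \in \mathrm{Mod}_g$ if and only if the complements $\Sigma_g - \gamma_1$ and $\Sigma_g - \gamma_2$ are homeomorphic: a separating curve splits $\Sigma_g$ into pieces of genus $h$ and $g-h$, and the unordered pair $\{h, g-h\}$ is a complete invariant of the $\mathrm{Mod}_g$-orbit, which is exactly the homeomorphism type of the complement. So the ``if'' direction of the second claim follows from \Cref{prop:twistatwist} as above. For the ``only if'' direction, I would argue contrapositively: suppose $\Sigma_g - \gamma_1$ and $\Sigma_g - \gamma_2$ are not homeomorphic, so no $f \in \mathrm{Mod}_g$ carries $\gamma_1$ to $\gamma_2$; I must show $T_{\gamma_1}$ and $T_{\gamma_2}$ are not conjugate. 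The cleanest route is to invoke the fact that a Dehn twist determines its curve: if $f T_{\gamma_1} f^{-1} = T_{\gamma_2}$, i.e. $T_{f(\gamma_1)} = T_{\gamma_2}$, then since the map $\gamma \mapsto T_\gamma$ from isotopy classes of simple closed curves to $\mathrm{Mod}_g$ is injective (for $g \geq 1$, excluding the genus-1 issue with the trivial/nontrivial curve, which does not arise here since we are comparing two genuine separating curves in $\Sigma_g$), we conclude $f(\gamma_1) = \gamma_2$, a contradiction.

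The injectivity of $\gamma \mapsto T_\gamma$ is the one input that is not purely formal; this is standard and follows, for instance, from the behavior of twists on geometric intersection numbers — $T_\gamma$ is the identity only if $\gamma$ bounds a disk, and more generally $i(T_\gamma(a), a)$ detects $\gamma$ — so I would either cite \cite{farbmargalit} for it or sketch the intersection-number argument. I expect this injectivity statement to be the main (and only real) obstacle, since the rest is a direct application of \Cref{prop:twistatwist} and the change-of-coordinates principle. One should also be slightly careful to state everything at the level of isotopy classes and to note that ``homeomorphic complement'' for separating curves is genuinely equivalent to ``same unordered genus partition,'' which is what makes the criterion usable in practice.
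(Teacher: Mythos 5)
Your proof is correct. The paper does not actually prove \Cref{prop:conjugate} --- it is stated as one of two ``standard results'' with no argument given --- and your write-up is exactly the standard one the authors are implicitly relying on: the change-of-coordinates principle combined with \Cref{prop:twistatwist} for the conjugacy direction, and the injectivity of $\gamma \mapsto T_\gamma$ on isotopy classes of essential simple closed curves for the converse, all of which can be cited from \cite{farbmargalit}.
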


\subsection{Dehn twists and Dehn surgeries}
To set some more notation, as in the introduction, $Y_{S,\phi}$ denotes surgery on the embedded surface $S \subset Y$ with gluing map $\phi \in \mathrm{Mod}_g$. When $L$ is a nullhomologous link in a three-manifold $Y$, Dehn surgery on an $n$-component link $L\subset Y$ with surgery coefficients $\mathbf{r} \in \mathbb{Q}^n$ will be denoted $Y_{\mathbf{r}}(L)$. 

The key insight this work relies on is by Lickorish in \cite{lickorish}: the result of cutting open a $3$-manifold $Y$ along an embedded surface $\Sigma$ and regluing by a Dehn twist along a simple closed curve embedded in the surface is the same as performing Dehn surgery on that same curve instead thought of as a knot in $Y$. This is often called the Lickorish trick, and though it is well-known we will spell this out in Figure \ref{fig:lickorishtrick} and Lemma \ref{lem:twist-surgery} for the benefit of the reader.

\begin{figure}[h]
    \centering
     \begin{subfigure}{0.45\textwidth}
     \labellist
        \large\hair 2pt
        \pinlabel {{\color{teal}$\Sigma$}} [tr] at 0 140
        \pinlabel {{\color{orange}$\mu$}} [bl] at 80 -17
        \pinlabel {{\color{violet}$\gamma$}} [r] at 215 153
\endlabellist
        \centering
        \includegraphics[width=0.8\linewidth]{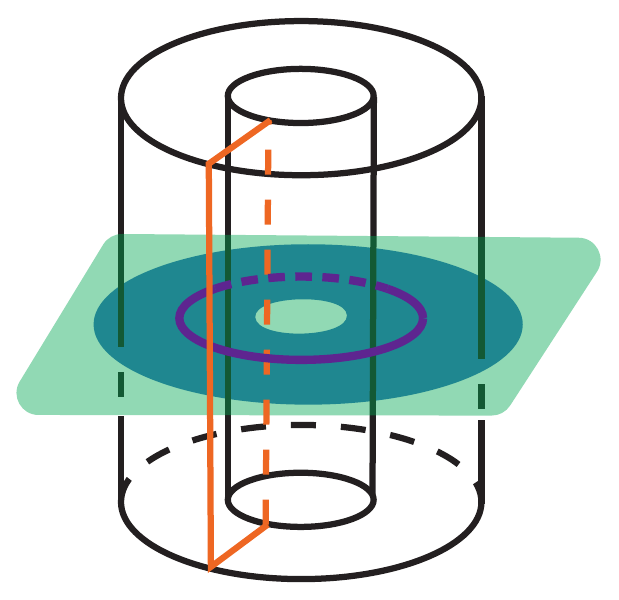}
    \end{subfigure}
    \begin{subfigure}{0.45\textwidth}
         \labellist
        \large\hair 2pt
        \pinlabel {{\color{orange}$\mu - \lambda$}} [bl] at 70 -17
        \endlabellist
        \centering
        \includegraphics[width=0.8\linewidth]{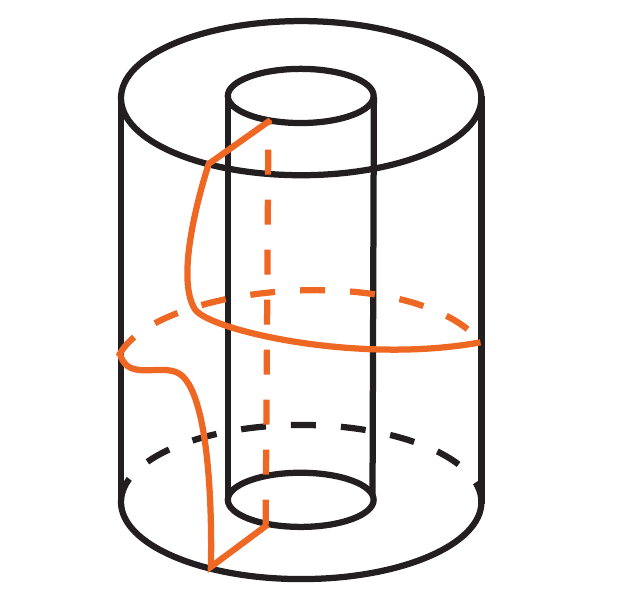}
    \end{subfigure}
    \caption{In this local picture of a curve $\gamma$ (in purple) lying on a surface $\Sigma$ (in green) embedded in a 3-manifold Y, we see the blue annulus is a neighborhood of the curve in the surface $\Sigma$, and the black solid torus is a neighborhood of $\gamma$ in $Y$. The meridian $\mu$ of $\gamma$ viewed as a knot in $Y$ is shown in orange. The result of cutting open $Y$ along $\Sigma$, performing a single positive Dehn twist on $\gamma \subseteq \Sigma$, and gluing back together along $\Sigma$ is exactly the same as removing the solid torus neighborhood of $\gamma$ and gluing it back with its meridian now going to the $\mu - \lambda$ curve shown on the right. This is precisely $-1$-surgery on $\gamma \subseteq Y$.}
    \label{fig:lickorishtrick}
\end{figure}

\begin{lem}\label{lem:twist-surgery}
Let $S$ be an embedded surface in a three-manifold $Y$ and let $\gamma$ be an oriented, simple closed curve on $S$ which is nullhomologous in $Y$.  Let $s$ denote the surface framing of $\gamma$ viewed as an integer.  Let $\phi$ denote the Dehn twist along $\gamma$.  Then, $Y_{S,\phi^n}$ can be described as $Y_{s - \frac{1}{n}}(\gamma)$.
\end{lem}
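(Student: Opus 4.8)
The plan is to verify this via a careful local analysis of the gluing, reducing everything to what happens in a neighborhood of $\gamma$. First I would set up coordinates: let $N(\gamma) \subset S$ be an annular neighborhood of $\gamma$ in the surface, and let $V = N(\gamma) \times [-1,1] \cong S^1 \times [0,1] \times [-1,1]$ be a solid torus neighborhood of $\gamma$ in $Y$, with $S \cap V = N(\gamma) \times \{0\}$. Cutting $Y$ along $S$ and regluing by $\phi^n$ (the $n$-th power of the Dehn twist $T_\gamma$) only changes the identification along $S$; away from $S$ nothing happens, so the resulting manifold $Y_{S,\phi^n}$ is obtained from $Y \setminus V$ by regluing a solid torus whose attaching data has been modified by passing through the twisting region. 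The point is to track the meridian $\mu$ of $\gamma$ (the boundary of a small disk in $Y$ transverse to $\gamma$) and the surface framing curve: under the $n$-fold Dehn twist, the meridian disk gets sheared, so $\mu$ is sent to $\mu + n\cdot(\text{surface-framing longitude})$, or $\mu - n\lambda_s$ depending on sign conventions, while the longitude direction along $\gamma$ is preserved. This is exactly the content of Figure~\ref{fig:lickorishtrick} for $n = 1$, and the general case follows because $\phi^n = T_\gamma^n$.

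Next I would convert this into surgery coefficients. By definition, the surface framing $s$ is the integer recording how the pushoff of $\gamma$ into $S$ sits relative to the Seifert framing $\lambda$ of $\gamma$ (which exists since $\gamma$ is nullhomologous in $Y$); so the surface-framing longitude is $\lambda + s\mu$ in the $(\mu,\lambda)$ basis of $H_1(\partial V)$. The regluing sends $\mu \mapsto \mu - n(\lambda + s\mu) = (1-ns)\mu - n\lambda$ (with the sign convention matching Figure~\ref{fig:lickorishtrick}, where a single positive twist yields $-1$-surgery, i.e.\ $\mu \mapsto \mu - \lambda$ when $s = 0$). Hence the new meridian — the curve bounding a disk in the reglued solid torus — is $-n\lambda + (1-ns)\mu$, and the surgery coefficient is the slope $\tfrac{1-ns}{-n} = s - \tfrac{1}{n}$. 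Therefore $Y_{S,\phi^n} = Y_{s - \frac{1}{n}}(\gamma)$, as claimed. I would also note the degenerate case $n=0$ gives coefficient $\infty$, i.e.\ $Y$ itself, consistent with $\phi^0 = \mathrm{id}$.

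The main obstacle I expect is purely bookkeeping: getting the signs right so that a \emph{positive} (left-handed, in the paper's convention) Dehn twist corresponds to $s - \tfrac{1}{n}$ rather than $s + \tfrac1n$, and making sure the surface framing is normalized so that a curve $\gamma$ bounding in $S$ on one side would give the "obvious" answer. The cleanest way to pin this down is to trust the worked example in Figure~\ref{fig:lickorishtrick}: one positive twist on a curve with surface framing $0$ (a curve whose surface pushoff is isotopic in $Y$ to its Seifert pushoff, e.g.\ a separating curve on a Heegaard surface) gives $-1$-surgery, which forces the formula $s - \tfrac1n$ and fixes all sign choices. Everything else is the standard computation of how Dehn twisting acts on $H_1$ of the boundary torus of a neighborhood, together with the elementary fact that performing the twist region $n$ times multiplies the shearing by $n$; I would present this compactly rather than belaboring the homological algebra.
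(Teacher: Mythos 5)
Your proposal is correct and follows essentially the same route as the paper's proof: localize the regluing to a solid torus neighborhood of $\gamma$, observe that the modification is by definition a Dehn surgery, and track the meridian to $(1-ns)\mu - n\lambda$, yielding the slope $s - \tfrac{1}{n}$. The only difference is that you spell out the surface-framing longitude as $\lambda + s\mu$ and the slope computation a bit more explicitly than the paper does, which is fine.
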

\begin{proof} Note that performing surgery on $S$ in this way fixes Y outside of a neighborhood of $\gamma \subset S$ considered as a subset of Y. Therefore, we only need to determine how $\nu(\gamma)$, a solid torus, changes after regluing $Y$ together by $\phi^n$. Modifying $Y$ inside a solid torus only is exactly the definition of Dehn surgery and can be completely described by the image of the meridian of $\nu(\gamma)$ after the diffeomorphism $\phi^n$ has been applied to $S$. Using coordinates on this torus corresponding to the meridian $\mu$ and the Seifert longitude $\lambda$, this means the meridian of $\nu(\gamma)$ will be sent to the curve $(1-ns)\mu-n\lambda$ where $s$ corresponds to the surface framing of $\gamma$ as in Figure \ref{fig:lickorishtrick}. 
\end{proof}



Since a smoothly embedded surface is bicollared, we can think about the result of cutting open $Y$ along $S$ and regluing by a composition $\phi \circ \psi$ of mapping classes in $\mathrm{Mod}_g$ as the same as cutting along $S \times \{0\}$ and regluing by $\psi$ and cutting along $S \times \{1\}$ and regluing by $\phi$.  
From this perspective, we see that there is a relationship between the Dehn twists in a factorization of $\phi$ and a surgery diagram for $Y_{S,\phi}$.  More precisely, if $\phi$ is given by $T_{\gamma_1}^{n_1} \ldots T_{\gamma_k}^{n_k}$, then $Y_{S,\phi}$ can be described by surgery on a $k$-component link in $Y$ where the individual components are isotopic to $\gamma_1,\ldots, \gamma_k$.  For example, in the case of a separating twist $\phi$, Lemma~\ref{lem:twist-surgery} shows that $Y_{\phi^n}$ can be described by surgery on $Y$ as in the left-hand side of Figure~\ref{fig:twist-surgery}.  Using handleslides and slam dunks, we obtain the alternative surgery descriptions in the right side of Figure~\ref{fig:twist-surgery}.

\begin{figure}
    \centering
      \begin{subfigure}{0.3\textwidth}
        \centering
        \includegraphics[width=0.8\linewidth]{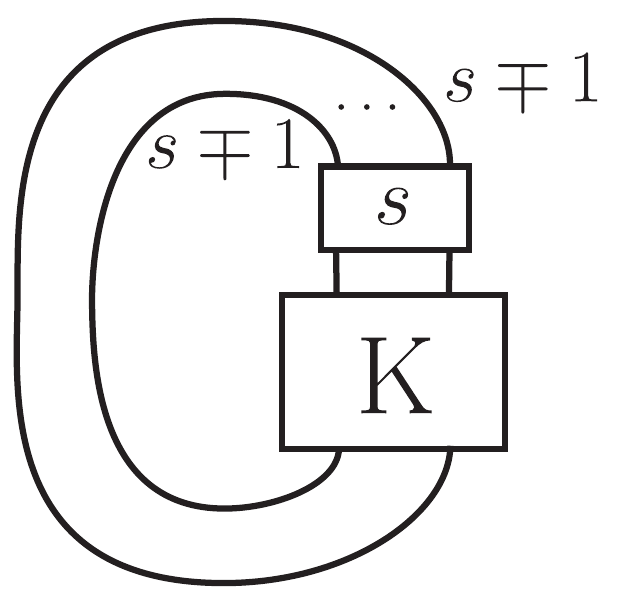}
    \end{subfigure}
     \begin{subfigure}{0.35\textwidth}
        \centering
        \includegraphics[width=0.8\linewidth]{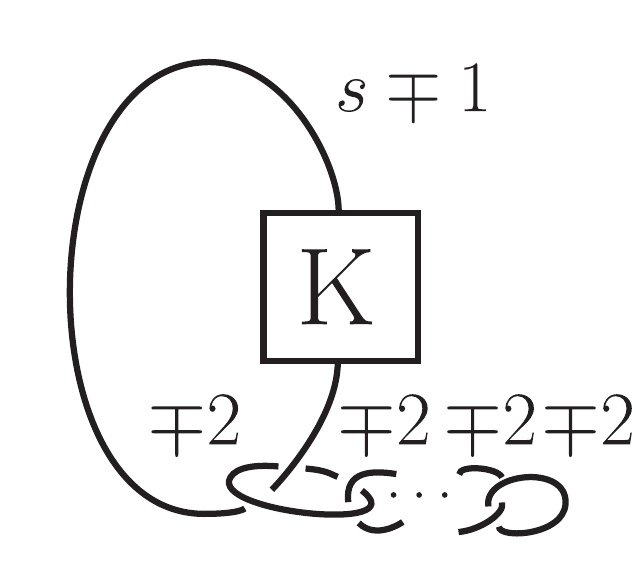}
    \end{subfigure}
    \begin{subfigure}{0.3\textwidth}
        \centering
        \includegraphics[width=0.8\linewidth]{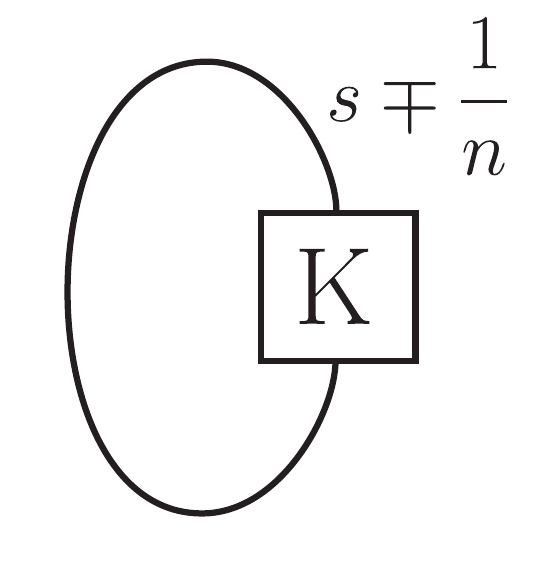}
    \end{subfigure}
    \caption{Above we see three equivalent surgery diagrams for the result of cutting open $S^3$ along a surface and twisting by $\phi^n$ where $\phi$ is a single Dehn twist along a simple closed curve in the surface isotopic to the knot $K$. In these diagrams, $s$ denotes the surface framing of the curve. In the left picture, n strands of the link run through the twist box and are tied into K.}
    \label{fig:twist-surgery}
\end{figure}

In general, if we instead apply Torelli surgery using a power of a product of various Dehn twists, the story becomes more complicated. By work in \cite{moritacasson}, every integral homology sphere can be obtained as $Y_{S,\phi}$ where $\phi \in \mathcal{I}_g$. In fact, every integral homology sphere can be obtained as $Y_{S,\phi}$ where $\phi \in \mathcal{K}_g$; however, it is  more difficult to find such an element in $\mathcal{K}_g$ compared to $\mathcal{I}_g$.  

\begin{lem}\label{lem:boundingpair-surgery}
Let $S$ be an embedded surface in a three-manifold $Y$ and let $\gamma = \gamma_1 \cup \gamma_2$ be a pair of oriented, simple closed curves on $S$ which together bound a subsurface of $S$.  Let $\phi$ denote the bounding pair map corresponding to $\gamma$.  
If $\gamma_1$, $\gamma_2$ are nullhomologous in Y, then $Y_{\phi}$ is $Y_{s-\frac{1}{n},s+\frac{1}{n}}(\gamma)$.  
\end{lem}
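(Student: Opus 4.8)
The plan is to follow exactly the same strategy as in the proof of Lemma~\ref{lem:twist-surgery}, but applied independently near each of the two curves $\gamma_1$ and $\gamma_2$. The key observation is that a bounding pair map $\phi = T_{\gamma_1} T_{\gamma_2}^{-1}$ is supported in a neighborhood $\nu(\gamma_1) \cup \nu(\gamma_2)$ of the two curves in $S$, so that cutting $Y$ open along $S$ and regluing by $\phi$ changes $Y$ only inside the two disjoint solid tori $\nu(\gamma_1), \nu(\gamma_2) \subset Y$. Because these two solid tori are disjoint, the effect on each can be analyzed separately, and the result is a Dehn surgery on the link $\gamma = \gamma_1 \cup \gamma_2$.

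First I would note that, since $\gamma_1$ and $\gamma_2$ are each nullhomologous in $Y$, each has a well-defined Seifert longitude, and hence the surface framing $s$ of each (the framing induced by the pushoff inside $S$) is a well-defined integer; moreover, since $\gamma_1$ and $\gamma_2$ cobound a subsurface of $S$, this common subsurface orientation forces the \emph{same} integer $s$ to be the surface framing of both components, which is why a single $s$ appears in the statement. Next, I would apply the computation from Lemma~\ref{lem:twist-surgery} to the solid torus $\nu(\gamma_1)$: the positive Dehn twist $T_{\gamma_1}$ sends the meridian of $\nu(\gamma_1)$ to $(1-s)\mu - \lambda$ in meridian–longitude coordinates, i.e.\ it realizes $(s - \tfrac{1}{n})$–surgery on $\gamma_1$ for the $n$-th power (here $n$ is the exponent; for a single bounding pair map $n = 1$, but stating it for $\phi^n$ costs nothing). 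Then I would apply the same computation to $\nu(\gamma_2)$, being careful that the twist along $\gamma_2$ is \emph{negative}: a negative Dehn twist sends the meridian to $(1+s)\mu + \lambda$, which is $(s + \tfrac{1}{n})$–surgery on $\gamma_2$. Combining the two independent modifications, $Y_\phi$ (and more generally $Y_{\phi^n}$) is obtained from $Y$ by performing $(s-\tfrac1n)$–surgery on $\gamma_1$ and $(s+\tfrac1n)$–surgery on $\gamma_2$ simultaneously, which is precisely $Y_{s - \frac{1}{n}, s + \frac{1}{n}}(\gamma)$ as claimed.

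I do not expect a serious obstacle here, since the lemma is essentially a two-fold application of Lemma~\ref{lem:twist-surgery} together with the bookkeeping of the bicollar decomposition $S \times [0,1]$ already described in the text. The one point requiring genuine care — the "hard part," such as it is — is the sign and framing bookkeeping: one must verify that the opposite-handed twists along $\gamma_1$ and $\gamma_2$ produce surgery coefficients that differ from the common surface framing $s$ by $-\tfrac1n$ and $+\tfrac1n$ respectively, and that the orientation convention for a bounding pair map (positive twist on $\gamma_1$, negative on $\gamma_2$, as fixed just before the statement) matches the asymmetric coefficients in the conclusion. This is a direct check against the local picture in Figure~\ref{fig:lickorishtrick} and the parametrization $T_c(\theta,t) = (\theta + 2\pi t, t)$ fixed at the start of Section~\ref{sec:background}.
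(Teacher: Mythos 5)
Your overall strategy matches the paper's: since $\gamma_1$ and $\gamma_2$ are disjoint, the twists along them commute (equivalently, the regluing is supported in two disjoint solid tori), so $(T_{\gamma_1}T_{\gamma_2}^{-1})^n = T_{\gamma_1}^n T_{\gamma_2}^{-n}$ and Lemma~\ref{lem:twist-surgery} applies to each component separately, with the sign of the twist flipping the sign of the $1/n$ correction. That part of your argument is fine, and your handling of the power $n$ and of the sign conventions is consistent with the statement.

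The one genuine gap is your justification that the two surface framings are the same integer $s$. You write that ``this common subsurface orientation forces the same integer $s$,'' but that is not a proof: the surface framing of $\gamma_i$ is the linking number $lk(\gamma_i,\gamma_i^+)$ of $\gamma_i$ with its pushoff in $S$, computed in $Y$ relative to the Seifert framing, so its value depends on how $\gamma_i$ is knotted and linked in $Y$, not only on the subsurface $S_0 \subset S$ that the two curves cobound. This equality is precisely where the nullhomology hypothesis and the cobounding surface both get used, and it is where the paper spends most of its proof: choose a Seifert surface $F$ for $\gamma_1$ meeting $S$ only along $\gamma_1$; gluing $S_0$ onto $F$ shows $lk(\gamma_2,\gamma_1^+) = lk(\gamma_1,\gamma_1^+)$; and since $\gamma_1^+$ and $\gamma_2^+$ are homologous in the complement of $\gamma_1 \cup \gamma_2$, one also gets $lk(\gamma_2,\gamma_1^+) = lk(\gamma_2,\gamma_2^+)$. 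Without some version of this linking-number computation, the single coefficient $s$ appearing on both components of the surgery description is unjustified, and that is the only nontrivial content of the lemma beyond Lemma~\ref{lem:twist-surgery}.
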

\begin{proof} Note that for a bounding pair map the surface framings on the two curves $\gamma_1$ and $\gamma_2$ are necessarily the same integers since the two curves are homologous.  Indeed, $lk(\gamma_1,\gamma_1^+)$ is the intersection of $\gamma_1$ with a Seifert surface $F$ for $\gamma_1$, which we can choose to intersect $S$ only at $\gamma_1$.  By gluing on the surface $S_0$ that $\gamma_1, \gamma_2$ cobound, we see that $lk(\gamma_2,\gamma_1^+) = lk(\gamma_1,\gamma_1^+)$.  However, $\gamma_1^+$ is homologous to $\gamma_2^+$  in the complement of $\gamma_1 \cup \gamma_2$, and hence $lk(\gamma_2,\gamma_1^+) = lk(\gamma_2,\gamma_2^+)$, giving the desired claim.  

Since $\gamma_1$ and $\gamma_2$ cobound an embedded surface, they must be disjoint. Since Dehn twists corresponding to disjoint curves commute, we see that $T_{\gamma}^n \coloneqq (T_{\gamma_1}T_{\gamma_2}^{-1})^n= T_{\gamma_1}^nT_{\gamma_2}^{-n}$. Therefore Lemma \ref{lem:twist-surgery} completes the proof.    
\end{proof}

\begin{ex}\label{ex:knottotwist}
We can view $1/n$-surgery on any knot $K$ in a homology sphere $Y$ as Torelli surgery by powers of a separating twist on some embedded surface $S$; simply take a neighborhood in $Y$ of a Seifert surface for $K$. The boundary of this neighborhood will be an embedded surface in $Y$ with $K$ lying on it as a separating curve. Similarly, any surgery $Y_{s-\frac{1}{n},s+\frac{1}{n}}(K \cup J)$ on a two component link where $s$ is the surface framing for the connected Seifert surface of $K \cup J$ can be viewed as Torelli surgery by a bounding pair map by doubling the connected Seifert surface.
\end{ex}


In general, for $\phi \in \mathcal{I}_g$, we can describe $Y_{S, \phi}$ as surgery on a link in $Y$ using a factorization of $\phi$ into several bounding pair maps and separating Dehn twists; our goal is to understand this surgery description.  
If we factor $\phi$ as a product of separating twists and bounding pair maps $\phi_1^{n_1} \cdots \phi_k^{n_k}$ along $\gamma_1,\ldots,\gamma_k$, then we can obtain a link surgery description from $Y$ to $Y_\phi$ as follows: 
\begin{itemize}
\item Identify a neighborhood of $S$ in $Y$ with $S \times I$.  
\item The $i$th component (or pair of components) of this link is a copy of $\gamma_i$ on $S \times \{1/i\}$.  If it is a bounding pair map, the surgery coefficient is $s_i - 1/n_i$ on $\gamma_i^1$ and $s_i + 1/n_i$ on $\gamma_i^2$, where $s_i$ is the surface framing.  If it is a separating twist, then the surgery coefficient is $s_i - 1/n_i$.   
\end{itemize}

There is naturally a question as to whether the integer $s_i$ we associate to the surface framing of $S$ depends on whether we have done surgery on the other $\gamma_j$ first.  This is a somewhat subtle point and is addressed in the following lemma. 


\begin{figure}[h] 
    \centering
      \begin{subfigure}{0.45\textwidth}
         \labellist                             
    \large\hair 2pt
    \pinlabel $S$ at 75 0
        \pinlabel {{\color{violet}$L$}} at 95 70
     \pinlabel $S^{\prime}$ at 215 0
    \pinlabel {{\color{orange}$K_S$}}  at 250 110
      \pinlabel {{\color{cyan}$K$}}  at 245 72
       \pinlabel {{\color{magenta}$F$}}  at 140 140
    \pinlabel $X$ at 45 250
      \pinlabel {$S \times I$}  at 140 250
          \pinlabel {$Z$}  at 245 250
    \endlabellist  
        \centering
      \includegraphics[width=6cm]{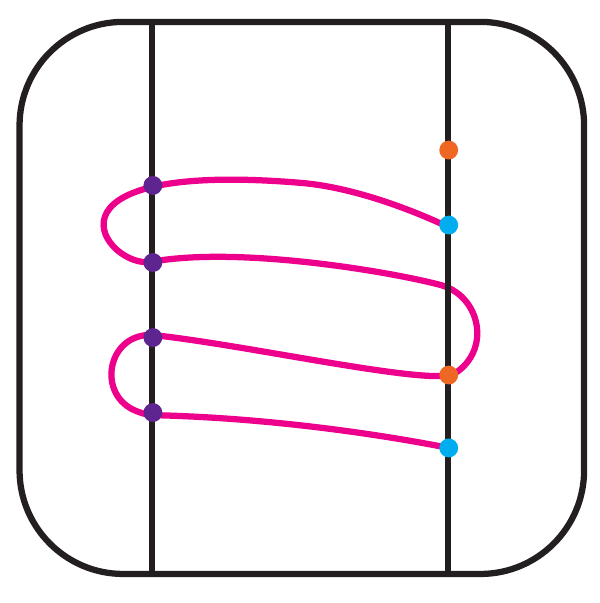}
    \end{subfigure}
     \begin{subfigure}{0.45\textwidth}
      \labellist                             
    \large\hair 2pt
    \pinlabel $S$ at 75 0
        \pinlabel {{\color{violet}$\overline{L}$}} at 160 72
    \pinlabel $S$ at 145 0
     \pinlabel $S^{\prime}$ at 215 0
    \pinlabel {{\color{orange}$\overline{K}_S$}}  at 250 110
      \pinlabel {{\color{cyan}$\overline{K}$}}  at 245 72
      \pinlabel {{\color{magenta}$\overline{F}$}}  at 180 138
    \pinlabel $X$ at 45 250
    \pinlabel $M_{\phi}$ at 110 248
      \pinlabel {$S \times I$}  at 180 250
          \pinlabel {$Z$}  at 245 250
    \endlabellist  
        \centering
           \includegraphics[width=6cm]{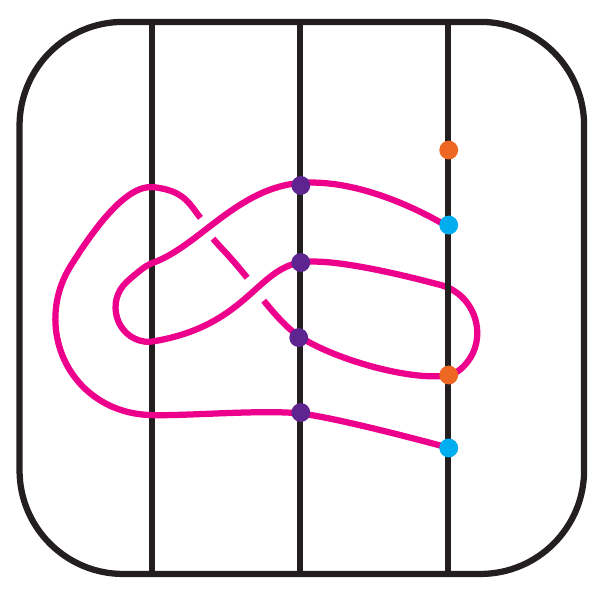}
    \end{subfigure}
    \caption{A schematic picture one dimension lower describing the proof of Lemma \ref{lem:framinglemma}.}
    \label{fig:framinglemma}
\end{figure}

\begin{lem}\label{lem:framinglemma}
Fix a surface $S$ in a homology sphere $Y$ and a Torelli element $\phi \in \mathcal{I}_g$.  Let $K$ be a simple closed curve on a parallel copy of $S$.  Then, the difference between the Seifert and surface framings of $K$ is unchanged by Torelli surgery corresponding to $\phi$.
\end{lem}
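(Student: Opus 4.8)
The plan is to compare the two framings of $K$ before and after the Torelli surgery by working in a four-manifold, following the schematic in Figure~\ref{fig:framinglemma}. First I would set up the relevant pieces: decompose $Y$ along a collar $S \times I$ of $S$ into two pieces $X$ and $Z$, arrange $K$ to lie on a parallel copy $S' = S \times \{t\}$ inside the collar, and let $F$ be a Seifert surface for $K$ in $Y$ (using that $K$ is nullhomologous since it lies on a parallel copy of the nullhomologous-in-$Y$ surface class — more precisely, $K$ is a simple closed curve on $S'$ so one can push a Seifert surface off). The surface framing $s_S$ of $K$ is read off from $S'$, and the Seifert framing $s_F$ is read off from $F$; the quantity of interest is the integer $s_F - s_S$, which equals the algebraic intersection number of $F$ with a pushoff of $K$ into $S'$, equivalently the relative self-intersection of $F$ against $S'$.

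The key step is to observe that Torelli surgery along $\phi$ only modifies $Y$ inside a neighborhood disjoint from $K$ and from a parallel copy of $S'$, so one can carry $K$ and the \emph{surface} framing datum across unchanged; the content is entirely in showing the \emph{Seifert} framing is also unchanged, i.e. that the new manifold $Y_{S,\phi}$ admits a Seifert surface $\overline{F}$ for (the image of) $K$ with the same intersection data against $S'$. To do this I would build the mapping-torus-type cobordism $M_\phi$ from $Y$ to $Y_{S,\phi}$ gotten by gluing $X \times I$, $(S\times I)\times I$ with a twist region realizing $\phi$, and $Z \times I$ — this is the standard cobordism between a manifold and its cut-and-reglue. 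Since $\phi \in \mathcal{I}_g$ acts trivially on $H_1(S)$, the class $[K] \in H_1(S')$ is preserved, and more importantly $H_1$ and $H_2$ of the relevant pieces behave well enough that the Seifert surface $F \subset X \cup (S\times I)$ (which we may isotope to be disjoint from the twisting region, since it can be taken to meet $S\times I$ in a controlled way near $K$) extends across $M_\phi$ to a surface $\overline{F}$ in $Y_{S,\phi}$ bounding $\overline K$. Then $s_{\overline F} - s_{S}$ is computed by the same local intersection picture near $K$, which was never altered, so it equals $s_F - s_S$.

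The main obstacle I expect is the bookkeeping that lets one push the Seifert surface $F$ away from the support of the twist: a priori $F$ could intersect the curves $\gamma_i$ being twisted in complicated ways, and one needs that the \emph{homology class} of $\partial(F \cap (S\times I))$ relative to $K$, together with triviality of the $\mathcal{I}_g$-action on $H_1(S)$, forces the existence of a capping surface on the $Y_{S,\phi}$ side with the correct relative Euler number. Concretely, one argues: $F$ can be taken to meet $S$ transversally in $K$ together with some extra null-homologous-in-$S$ curves; twisting by $\phi$ sends these curves to homologous curves (as $\phi$ acts trivially on $H_1(S)$), so the surgered manifold still bounds a surface obtained from $F$ by regluing, and the self-intersection count localizes to a neighborhood of $K$ disjoint from everything that changed. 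Making this ``localization of the intersection number'' precise — perhaps by expressing $s_F - s_S$ as a pairing in $H_*(Y, \nu(K))$ that is manifestly preserved by a homology-equivalence supported away from $K$ — is the technical heart of the argument.
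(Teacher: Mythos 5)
Your proposal is correct and follows essentially the same route as the paper's proof: cut the Seifert surface $F$ along $S$, use the triviality of the $\mathcal{I}_g$-action on $H_1(S)$ to recap the boundary curves $F\cap S$ on the far side of the mapping cylinder $M_\phi$, and observe that the framing difference is computed locally near $K$, which the surgery never touches. One small imprecision: the curves of $F\cap S$ need not be null-homologous in $S$ itself, only in the piece $X$ that they bound into; what the argument actually uses is that $\phi$ preserves their class in $H_1(S)$, so their image is still null-homologous in $X\cup_S M_\phi$ and hence bounds a new capping surface there.
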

\begin{proof}
Let $K_S$ denote a surface-framed pushoff of $K$ in $Y$.  Let $\overline{K}$ denote the image of $K$ after Torelli surgery and $\overline{K}_S$ the surface-framed pushoff of $\overline{K}$, which is the same as the image of $K_S$ after Torelli surgery.  We would like to show that 
\[
F \cdot K_S = \overline{F} \cdot \overline{K}_S,
\]
where $F$ is a Seifert surface for $K$ and $\overline{F}$ is a Seifert surface for $\overline{K}$.  

Fix a Seifert surface  $F$ for $K$.  For notation, let $S'$ be the parallel copy of $S$ and write $$Y = X \ \cup_{S} \ S \times I \ \cup_{S'} Z$$ as in the left hand side of Figure \ref{fig:framinglemma}.  Let $L = F \cap S$.  Note that $L$ is nullhomologous in $X$, since it bounds $F \cap X$.  For the sake of this proof, we can understand $Y_{S,\phi}$ using the schematic in Figure \ref{fig:framinglemma} where we have constructed $Y_{S,\phi}$ by cutting open $Y$ along $S$ and gluing in the mapping cylinder $M_{\phi}$. Because $\phi \in \mathcal{I}_g$, $\overline{L}$ is still nullhomologous in $X  \ \cup_{S} \ M_{\phi}$.  Gluing the embedded surface representing this nullhomology to  $F \cap (S \times I \ \cup_{S'} Z ) $ inside $Y_{S,\phi}$ yields a Seifert surface $\overline{F}$ for $\overline{K}$ in $Y_{S,\phi}$. By construction, the intersection number between $F$ and $K_S$ is the same as that of $\overline{F}$ and $\overline{K}_S$.       
\end{proof}


\begin{rmk}
Torelli surgery produces an important relationship between the Johnson kernel and boundary links (links whose components bound disjoint Seifert surfaces).  More precisely, if $\phi \in \mathcal{K}_g$ is factored as a product of separating twists, then this factorization gives an expression of $Y_\phi$ as surgery on a boundary link in $Y$.      
\end{rmk}

\subsection{3-manifold invariants}
In this work we will focus on two 3-manifold invariants. The first, the Casson invariant, has multiple interpretations. The original definition is loosely a suitable count of conjugacy classes of irreducible $SU(2)$-representations of the fundamental group of the 3-manifold, and can be computed in terms of the $SU(2)$-representations of a Heegaard surface. Another is loosely a count of gauge equivalence classes of irreducible flat  $SU(2)$-connections \cite{taubes}. In our case, we will be interested in the characterization with the following axiomatic definition.

\begin{dfn}[See for example Theorem 12.1 in \cite{saveliev}]\label{def:casson} Let $\mathcal{S}$ be the set of diffeomorphsim classes of oriented integral homology 3-spheres. The Casson invariant is the unique map $\lambda: \mathcal{S} \rightarrow \mathbb{Z}$ satisfying
\begin{enumerate}
    \item $\lambda(S^3) = 0$; 
    \item For a knot $K$ in a homology sphere $Y$, 
    \begin{equation}\label{eq:casson-surgery}
        \lambda(Y_1(K)) - \lambda(Y) = \frac{1}{2} \Delta''_{K}(1),
    \end{equation} 
    where $\Delta_K(t)$ is the symmetrized Alexander polynomial of $K$ normalized so that $\Delta_K(0) = 1$; 
    \item $\lambda (S^3_{1}(T_{2,3}))=1$.
\end{enumerate}
\end{dfn}


Next, we will need the $d$-invariant.  While we will not need the definition, and instead mostly apply known results, we include it for context.  For any integral homology sphere $Y$, the Heegaard Floer homology  $HF^{-}(Y)$ is a graded $\Z/2\Z[U]$-module with the structure
\[ HF^{-}(Y) = \Z/2\Z[U]_{d(Y)} \oplus HF^{-}_{red}(Y)\]
where $d(Y)$ is the grading of $1$ in the free summand and $HF^{-}_{red}(Y)$ is the $U$-torsion submodule ($HF^-_{red}(Y)$ is called the {\em reduced} Heegaard Floer homology of $Y$).  This number $d(Y)$ is the $d$-invariant, which is always an even integer and is a homology cobordism invariant \cite{ozszdinvt}; more generally, $d(Y)$ is loosely governed by the four-manifolds bounding $Y$.  For more exposition on the $d$-invariants, see for example \cite{homsurvey}.

\section{Proofs of asymptotic results}\label{sec:proofs}

In this section, we begin by bounding the change in $d$-invariants under Torelli surgery.  Suppose that $\phi$ is a separating twist or bounding pair map on $\Sigma_g$.  By cutting along the corresponding curves in $\Sigma_g$, we obtain two surfaces.  In what follows, let $m$ denote the minimal value among the two genera.  
\begin{prop}\label{prop:onetorellielt}
Fix a homology sphere $Y$.  Let $\phi$ be an element of the Torelli group of genus $g$ which is either a separating twist or bounding pair map.  Then, 
    \[
    |d(Y) - d(Y_\phi)| \leq 2\lceil{\frac{m}{2}}\rceil.
    \]
\end{prop}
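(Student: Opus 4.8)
The plan is to reduce both cases to a statement about Dehn surgery via the Lickorish trick and then feed it into a genus bound for the change in the $d$-invariant under $\pm 1$-surgery. The key input I would isolate first is the inequality $|d(Y_{\pm 1}(K)) - d(Y)| \le 2\lceil g(K)/2 \rceil$ for an arbitrary knot $K$ in a homology sphere $Y$, with $g(K)$ the Seifert genus; this follows from the description of the $d$-invariant of $\pm1$-surgery in terms of the correction terms $V_0$ of $K$ and of its mirror, together with the standard bound $V_0 \le \lceil g_4/2\rceil \le \lceil g/2\rceil$, or alternatively from the large-surgery/mapping-cone formula. Note this inequality is insensitive to replacing $\pm 1$ by $\pm 1/n$.

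For the separating twist case, write $\phi = T_\gamma^{\pm 1}$ with $\gamma$ separating $S$ into pieces of genera $m_1,m_2$ and $m=\min(m_1,m_2)$. The genus-$m$ side $S_0\subset S$, pushed slightly off $S$ into $Y$, is an embedded Seifert surface for $\gamma$ whose induced framing is exactly the surface framing of $\gamma$; hence the surface framing is $0$, and Lemma~\ref{lem:twist-surgery} gives $Y_\phi = Y_{\mp 1}(\gamma)$ with $g(\gamma)\le m$. Applying the inequality above finishes this case.

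For the bounding pair map along $\gamma_1\cup\gamma_2$, which cobound a genus-$m$ subsurface $\Sigma_0\subset S$, Lemma~\ref{lem:boundingpair-surgery} presents $Y_\phi$ as $Y_{s-1,\,s+1}(\gamma_1\cup\gamma_2)$ with $s=\mathrm{lk}(\gamma_1,\gamma_2)$. My plan would be to simplify this two-component surgery — band-summing $\gamma_1$ and $\gamma_2$ along an arc of $\Sigma_0$ and then using handleslides and a slam-dunk — so as to exhibit $Y_\phi$ as $\pm 1/n$-surgery on a single knot bounding a surface built from $\Sigma_0$, after which the inequality applies again. I expect this bookkeeping to be the main obstacle: the naive band-sum of the two boundary circles of $\Sigma_0$ produces a knot of genus $m+1$, one too many when $m$ is even. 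So one needs either a sharper surgery manipulation, or a direct four-dimensional argument — cap $\Sigma_0$ with the cores of the two $2$-handles in the cobordism $W=(Y\times I)\cup h_{\gamma_1}\cup h_{\gamma_2}$ to obtain a closed genus-$m$ surface $\widehat\Sigma_0\subset W$ (whose intersection form is indefinite of rank $2$ and determinant $-1$), and apply an adjunction-type inequality for the $d$-invariant across $W$ — to land exactly at the bound $2\lceil m/2\rceil$.
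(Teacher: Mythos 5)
Your separating-twist case is a genuinely different (and reasonable) route from the paper's: the paper does not argue via $V_0$ and the mapping cone, but instead quotes an inequality from the proof of Proposition 3.6 of \cite{LMPC}, $0 \leq d(Y_\phi) - d(Y) \leq m + d_b(N_{-1,m})$, and then plugs in Park's computation of the bottom $d$-invariant of the Euler-number $-1$ circle bundle over a genus $m$ surface. One caution about your version: the bound $V_0(K) \leq \lceil g/2 \rceil$ is standard for knots in $S^3$, but your knot lives in an arbitrary homology sphere $Y$, where the correct statement must be normalized against $d(Y)$ (for the unknot in $Y$ one has $d(Y_{1/n}(U)) = d(Y)$, not $0$); establishing the genus bound in that generality is essentially the content of the cited result in \cite{LMPC}, so you are not getting it for free. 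Also, your genus count for the band sum is off: cutting the genus-$m$ subsurface $\Sigma_0$ (two boundary components) along an arc joining $\gamma_1$ to $\gamma_2$ yields a surface with one boundary component and Euler characteristic $-2m+1$, hence genus $m$, not $m+1$.

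The real gap is the bounding pair case, where you name two strategies but carry out neither. The handleslide/band-sum route does not land where you need it to: sliding $\gamma_1$ over $-\gamma_2$ converts the framings $(s-1,s+1)$ with linking $s$ into a $0$-framed band sum linking $\gamma_2$ with multiplicity $-1$, and $\gamma_2$ is not a meridian of the band sum, so no slam dunk is available and the result is not presented as $\pm 1/n$-surgery on a single knot; the fact that the proposition's bound for bounding pairs is twice the separating-twist bound is a hint that no such reduction exists. Your four-dimensional alternative identifies the right geometric picture --- the capped-off $\Sigma_0$ is a closed genus-$m$ surface of square zero in a two-handle cobordism with intersection form $\langle 1 \rangle \oplus \langle -1 \rangle$ --- but ``an adjunction-type inequality for the $d$-invariant'' across an \emph{indefinite} cobordism is not an off-the-shelf tool: the usual $d$-invariant inequalities require negative definiteness. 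That missing input is exactly what the paper supplies, by importing from the proof of Theorem 1.9 of \cite{LMPC} the inequality $d(Y_\phi) - d(Y) + \underline{d}(N_{0,m}) \geq -\tfrac{2m+1}{2}$ involving the Behrens--Golla twisted correction term of the trivial circle bundle $N_{0,m}$, together with their computation $\underline{d}(N_{0,m}) = (-1)^{m+1}/2$, and then applying the same bound to $\phi^{-1}$ to get the reverse inequality. Without an argument of that strength, your proposal proves the separating-twist half only.
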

\noindent See also Remark 3.10 in \cite{LMPC}.  
\begin{proof}
This can be deduced from  \cite{LMPC} and known computations of the $d$-invariants of circle bundles over surfaces.  In the former, a linear bound on $|d(Y) - d(Y_\phi)|$ in terms of $m$ is established but not quantified.  We do not review the arguments in \cite{LMPC} and only describe the additional steps needed here to quantify the bounds explicitly.  First, we consider the case of a separating twist.  Since $d$-invariants change sign under orientation reversal, we may assume that $\phi$ is a positive Dehn twist, so $Y_\phi$ is $-1$-surgery on a knot of genus at most $m$ in $Y$.  In the proof of Proposition 3.6 in \cite{LMPC}, the following inequality is established:
\[
0 \leq d(Y_\phi) - d(Y) \leq m + d_b(N_{-1,m}), 
\]
where $d_b(N_{n,m})$ denotes the ``bottom'' $d$-invariant and $N_{n,m}$ denotes the circle bundle over Euler number $n$ over a genus $m$ surface.  For non-zero Euler numbers, this $d$-invariant was computed in \cite[Theorem 4.2.3]{Park-thesis}.  This $d$-invariant for $N_{-1,m}$ is particularly simple and is 0 if $m$ is even and 1 if $m$ is odd.  Therefore, we obtain
\[
|d(Y_\phi) - d(Y)| \leq \lceil{\frac{m}{2}}\rceil.
\]
(Note that this inequality is all that is necessary to prove Corollary~\ref{cor:surgery-bound}.)  

In the case of a bounding pair map, the proof of Theorem 1.9 in \cite{LMPC} establishes the inequality
\[
d(Y_\phi) - d(Y) + \underline{d}(N_{0,m}) \geq -\frac{2m+1}{2}, 
\]
where $\underline{d}$ denotes the twisted $d$-invariant of Behrens-Golla \cite{BehrensGolla}.  Behrens-Golla compute this twisted $d$-invariant to be $(-1)^{m+1}/2$ (\cite[Theorem 6.1]{BehrensGolla}), and so we obtain
\[
d(Y_\phi) - d(Y) \geq (-1)^m/2 - \frac{2m+1}{2} = -2\lceil{\frac{m}{2}}\rceil
\]
Note that $Y = (Y_\phi)_{\phi^{-1}}$, and $\phi^{-1}$ is a bounding pair map as well.  Therefore, we also obtain 
\[
d(Y) - d(Y_\phi) \geq -2\lceil{\frac{m}{2}}\rceil,
\]
which completes the proof.  
\end{proof}

We can now give a more general version of Theorem \ref{thm:lineargrowth} and see that the statement in the introduction is a special case. 


\begin{thm}\label{thm:preciselineargrowth} Let $S$ be an embedded surface of genus $g$ in $Y$ and $A$ be any set of separating twists and bounding pair maps.  Let $m_A = \max_{\phi \in A} m_\phi$ (where  $m_\phi$ is the constant $m$ for map $\phi$ in Proposition \ref{prop:onetorellielt}).  Let  $\phi \in  \langle A  \rangle$ be a gluing map where $\langle A  \rangle$ denotes the subgroup generated by $A$. If $Y = S^3_\phi$,  then
\[ |d(Y)| \leq 2\lceil \frac{m_A}{2} \rceil ||\phi||_A\]
where $||\phi||_A$ is the word length of $\phi$ in $ \langle A \rangle $. Since $\mathcal{I}_g$ is generated by bounding pair maps and separating curves, we can choose $A$ so that $\langle A  \rangle = \mathcal{I}_g.$  Further, a generating set $A$ for $\mathcal{I}_g$ can be chosen such that $m_A = 1$.  If $g >2 $, we can choose a generating set $A$ which is finite.  
\end{thm}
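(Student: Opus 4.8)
\textbf{Proof plan for Theorem~\ref{thm:preciselineargrowth}.}

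The plan is to bootstrap from Proposition~\ref{prop:onetorellielt} using the fact that cutting and regluing is ``associative'' along parallel copies of the surface, so that a word of length $n$ in the generators $A$ corresponds to performing $n$ successive single-generator Torelli surgeries, one on each of $n$ parallel copies of $S$. First I would fix a factorization $\phi = \phi_1^{\varepsilon_1}\cdots\phi_n^{\varepsilon_n}$ realizing the word length $n = \|\phi\|_A$, where each $\phi_i \in A$ and $\varepsilon_i = \pm 1$ (allowing repetitions, and absorbing inverses since the inverse of a separating twist or bounding pair map is again of the same type with the same value of $m$). Using the bicollar observation already recorded in Section~\ref{sec:background}, I would identify a tubular neighborhood of $S$ in $Y$ with $S \times [0,1]$ and write $Y_{S,\phi}$ as the manifold obtained from $Y$ by cutting along $S \times \{i/(n+1)\}$ and regluing by $\phi_i^{\varepsilon_i}$ for $i = 1, \ldots, n$. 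The key structural point is that these operations commute and can be performed one at a time: setting $Y^{(0)} = Y$ and letting $Y^{(j)}$ be the result of performing the first $j$ surgeries, each $Y^{(j)}$ is obtained from $Y^{(j-1)}$ by a single Torelli surgery along $\phi_j^{\varepsilon_j}$ on an embedded genus $g$ surface (a parallel copy of $S$ sitting in $Y^{(j-1)}$), and $Y^{(n)} = Y_{S,\phi}$.

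Next I would apply Proposition~\ref{prop:onetorellielt} to each step. Since $\phi_j$ is a separating twist or bounding pair map with minimal complementary genus $m_{\phi_j} \le m_A$, and since $\lceil \cdot \rceil$ is monotone, Proposition~\ref{prop:onetorellielt} gives
\[
|d(Y^{(j)}) - d(Y^{(j-1)})| \le 2\left\lceil \frac{m_{\phi_j}}{2}\right\rceil \le 2\left\lceil \frac{m_A}{2}\right\rceil
\]
for each $j$. Summing over $j$ and using the triangle inequality yields $|d(Y_{S,\phi}) - d(Y)| \le 2\lceil m_A/2\rceil \, n = 2\lceil m_A/2\rceil \|\phi\|_A$. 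Taking $Y = S^3$ so that $d(Y) = 0$ gives the displayed bound $|d(Y)| \le 2\lceil m_A/2\rceil\|\phi\|_A$ for $Y = S^3_\phi$; the general version with arbitrary $Y$ and arbitrary embedded $S$ is Theorem~\ref{thm:lineargrowth} and comes out of the same argument without setting $d(Y)=0$.

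The one subtlety — and the step I expect to require the most care — is verifying that each intermediate surgery really is a \emph{single} Torelli surgery along a curve (or bounding pair) whose complementary genus is still controlled by $m_A$, and in particular that the surface framings used implicitly do not drift as we perform successive surgeries on parallel copies. This is precisely the content of Lemma~\ref{lem:framinglemma}: the difference between the Seifert and surface framings of a curve on a parallel copy of $S$ is unchanged by Torelli surgery, so the surgery coefficients $s_j \mp 1/n_j$ (resp.\ the pair $s_j - 1/n_j$, $s_j + 1/n_j$) describing step $j$ are the same whether computed in $Y$ or in $Y^{(j-1)}$. Hence Proposition~\ref{prop:onetorellielt}, which was stated for a single Torelli surgery on a genus $g$ surface in a homology sphere, applies verbatim at each stage (each $Y^{(j)}$ is again a homology sphere since $\phi_j \in \mathcal{I}_g$). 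The remaining claims are standard: $\mathcal{I}_g$ is generated by bounding pair maps and separating twists by \cite{birman}, \cite{powell}; one may take $A$ to consist of the Humphries-type generators so that every complementary genus equals $1$, giving $m_A = 1$ and hence $\lceil m_A/2\rceil = 1$, recovering the constant $2\lceil k_A/2\rceil$ of Theorem~\ref{thm:lineargrowth} (here $k_\phi = m_\phi$); and for $g > 2$ finiteness of a generating set is \cite{johnson1985structure}.
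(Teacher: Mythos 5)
Your proposal is correct and follows essentially the same route as the paper: the paper's proof of Theorem~\ref{thm:preciselineargrowth} simply cites ``a direct application of Proposition~\ref{prop:onetorellielt}'' together with \cite{johnson1985structure} and \cite{mcculloughmiller}, and your telescoping argument over parallel copies of $S$ (with the framing issue handled by Lemma~\ref{lem:framinglemma}) is exactly the intended expansion of that one-line application.
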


\begin{proof}  The desired inequality follows from a direct application of Proposition \ref{prop:onetorellielt}.  The case of $m_A = 1$ follows by work of \cite{johnson1985structure}, which shows that a generating set can always be chosen to consist of twists on curves which split off genus 1 subsurfaces from $\Sigma_g$. 
By work in \cite{johnson1985structure} and \cite{mcculloughmiller}, $\mathcal{I}_g$ is finitely generated exactly when $g>2$. 
\end{proof}

It is natural to ask whether these results are really special to the structure of the Torelli group as opposed to the mapping class group as a whole.  Of course, one can frequently apply surgery using elements not in Torelli and get back a homology sphere. One should not a priori expect these results to apply to the mapping class group as whole, particularly because the word metric in the Torelli group is distorted at least exponentially when included into the mapping class group (and in fact, even into the mod p congruence subgroups) \cite{bfpdistortion} \cite{kunoomori}. In fact, the following lemma shows we cannot bound the $d$-invariant of a homology sphere obtained by even a single Dehn twist along an arbitrary curve. 

\begin{figure}[ht]
    \centering
        \includegraphics[width=0.4\linewidth]{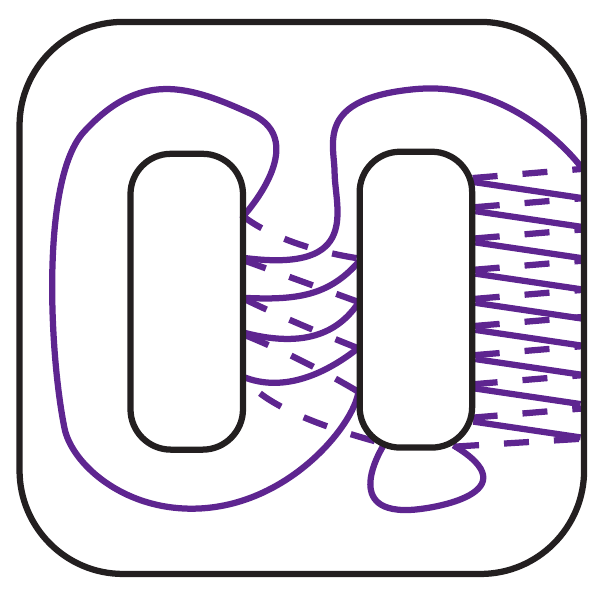}
    \caption{An example of a curve $\gamma_k$ in the proof of Lemma \ref{lem:unbounded} with $k=1$.}
    \label{fig:unbounded}
\end{figure}

\begin{lem} \label{lem:unbounded}
Fix a Heegaard surface $S$ for a homology sphere $Y$ of genus at least 2.  There does not exist a constant $C$ such that
\[
|d(Y_{S,\tau})| \leq C
\]
for every Dehn twist $\tau$ on $S$ producing a homology sphere.
\end{lem}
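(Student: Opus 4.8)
The plan is to exhibit, for every $k\geq 1$, a simple closed curve $C_k$ on $S$ such that a single Dehn twist along $C_k$ produces a homology sphere whose $d$-invariant has absolute value at least $2k-|d(Y)|$; the curve $C_1$ is depicted in Figure~\ref{fig:unbounded}. First I would record how a single twist interacts with surgery. Since $Y$ is a homology sphere, every simple closed curve $\gamma\subset S$ is nullhomologous in $Y$, so by Lemma~\ref{lem:twist-surgery} the twist $T_\gamma^{-1}$ yields $Y_{s+1}(\gamma)$, where $s$ is the surface framing of $\gamma$. This is a homology sphere exactly when $s\in\{0,-2\}$, and for $s=0$ it equals $Y_{+1}(\gamma)$. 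Hence it suffices to find curves $C_k$ on $S$ with surface framing $0$ for which $d(Y_{+1}(C_k))$ is unbounded in $k$. Note that the $C_k$ must be non-separating: a separating curve on a genus $g$ surface bounds a subsurface of $S$ of genus at most $\lfloor g/2\rfloor$, which is an embedded Seifert surface for it in $Y$, so its knot type has bounded Seifert genus, and then $\pm 1$-surgery changes the $d$-invariant by a bounded amount, exactly as in Proposition~\ref{prop:onetorellielt}.

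The curves $C_k$ I would construct inside a fixed ball $B\subset Y$ that meets $S$: using the hypothesis $g\geq 2$, one produces in $Y$ a ball $B$ together with a piece of $S\cap B$ on which $C_k$ is drawn so that, as a knot in $B\cong B^3$, it is isotopic to the torus knot $T_{2,4k+1}$, with the general $C_k$ obtained from $C_1$ by inserting $k$ copies of a standard twist region. The one genuine subtlety — and the step I expect to be the main obstacle — is to arrange simultaneously that the surface framing of $C_k$ with respect to $S$ is exactly $0$, rather than the torus framing $2(4k+1)$. This is precisely where $g\geq 2$ enters: there is a spare handle of $S$ available, around which one can route $C_k$ so that its surface framing changes without its isotopy type in $Y$ changing (a curl running over a handle absorbing the framing). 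Keeping $C_k$ inside a single ball while doing this, and carrying out the framing correction carefully, is the content of Figure~\ref{fig:unbounded}; in the write-up I would present the $k=1$ picture and indicate the passage to general $k$.

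Granting the construction, $C_k$ lies in the ball $B$, so the neighborhood of $C_k$ on which surgery is performed is contained in $B$, and therefore $Y_{S,T_{C_k}^{-1}}=Y_{+1}(C_k)=Y\#S^3_{+1}(T_{2,4k+1})=Y\#\Sigma(2,4k+1,8k+1)$. Since $\Sigma(2,4k+1,8k+1)$ has $d$-invariant $-2k$ (as recalled in the proof of Corollary~\ref{cor:cayleygraph}) and the $d$-invariant is additive under connected sum, $d(Y_{S,T_{C_k}^{-1}})=d(Y)-2k$. Letting $k\to\infty$ shows that no constant $C$ can bound $|d(Y_{S,\tau})|$ over all Dehn twists $\tau$ on $S$ producing homology spheres, which is the assertion of the lemma.
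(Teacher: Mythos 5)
Your computation in the model case agrees with the paper's: place a $(2,4k+1)$ torus-knot pattern on one handle of $S$, use a second handle to drag the surface framing from $8k+2$ down to $0$, and observe via Lemma~\ref{lem:twist-surgery} that a single negative twist is then $+1$-surgery on $T_{2,4k+1}$, whose $d$-invariant $-2k$ is unbounded in $k$. That part, including your remark that such curves are forced to be non-separating, is fine.

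The gap is in how you handle a general homology sphere $Y$. Your argument needs $C_k$ to lie on a piece of $S$ contained in a ball $B\subset Y$, so that $Y_{S,\tau}=Y\#S^3_{+1}(T_{2,4k+1})$ and additivity of $d$ finishes the proof. But $C_k$ runs over two handles of the Heegaard surface (one carrying the pattern, one absorbing the framing), and a subsurface of $S$ containing a handle need not lie in any ball of $Y$: a simple closed curve on $S$ passing once over a handle is isotopic, inside the corresponding Heegaard handlebody, to the core of that handle, and these cores generate $\pi_1(Y)$. So whenever $\pi_1(Y)\neq 1$ some handle has non-nullhomotopic core, and for $g=2$ a genus-two subsurface of $S$ carries curves representing both generators and hence can never sit inside a ball. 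For the genus-$2$ splitting of $\Sigma(2,3,5)$, say, the ball $B$ you posit does not exist, the connected-sum decomposition fails, and you are left with no computation of $d(Y_{S,\tau})$. The difficulty you flagged (correcting the framing) is not the real obstacle; the ball is. The paper sidesteps this by first reducing to the standard genus-$g$ Heegaard surface $S_0$ of $S^3$: writing $Y=S^3_{S_0,\phi}$ for a fixed $\phi\in\mathcal{I}_g$, Theorem~\ref{thm:lineargrowth} bounds $|d(Y_{S,\tau})-d(S^3_{S_0,\tau})|$ by a constant depending only on $\phi$, so unboundedness over $S^3$ (where your construction is literally correct, since the standard handlebody lies in a ball and the framing-correcting arc unknots) implies unboundedness over $Y$. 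Inserting that one reduction at the start repairs your proof, after which it coincides with the paper's.
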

\begin{proof}
By Theorem~\ref{thm:lineargrowth}, it suffices to prove the result for the standard genus $g$ Heegaard surface $S$ for $S^3$.  Place the torus knot $T_{2,4k+1}$ on a genus 1 subsurface so that it corresponds to a $(2,4k+1)$-curve should that genus 1 surface be capped off.  In this picture, the surface framing is $8k+2$. Therefore, modify the torus knot so that an arc runs along the longitude of the complementary subsurface of $S$ and wraps meridionally $-(8k+2)$ times as in Figure \ref{fig:unbounded} for the case $k=1$. The resulting surface framing from $S$ is now zero.  Let $\tau$ denote the negative Dehn twist along this new curve which we will call $\gamma_k$, which is clearly not in Torelli (the curve is homologically essential in $S$).  Because the surface framing is 0, $S^3_\tau = S^3_1(T_{2,4k+1})$, which is a homology sphere with $d = -2k$.  The result follows.  
\end{proof}



For comparison, we also quickly observe that the quadratic bounds on the Casson invariant of Broaddus-Farb-Putman do not apply for the word metric using the standard infinite generating sets for the Torelli group.
\cassonfail*
\begin{proof}
Let $K_n$ denote the twist knot $(2n+1)_1$.  Consider the orientable checkerboard surface for the standard projection of $K_2$; a neighborhood of this surface provides half of a Heegaard surface for $S^3$ where $K_2$ sits as a separating curve as seen in Figure \ref{fig:twistknotonsfce}.  Further, all other $n$ are obtained by inserting meridional twists along the curve in the right side of the figure.  Note that all the $K_n$ still sit on this surface as separating curves.  Consequently, the manifolds $S^3_1(K_n)$ are all obtained from $S^3$ by Torelli surgery using a single separating twist, i.e. an element with word length 1.  However, $|\lambda(S^3_1(K_n))| = n$ by the surgery formula for the Casson invariant, as $K_n$ has Alexander polynomial $nt - (2n-1) + nt^{-1}$ .  Hence there is no attainable bound on the change in the Casson invariant under Torelli surgery in terms of $\| \cdot \|_{\infty}$.  
\end{proof}


\begin{figure}[ht]
    \centering
     
     \begin{subfigure}{0.45\textwidth}
        \centering
        \includegraphics[width=0.9\linewidth]{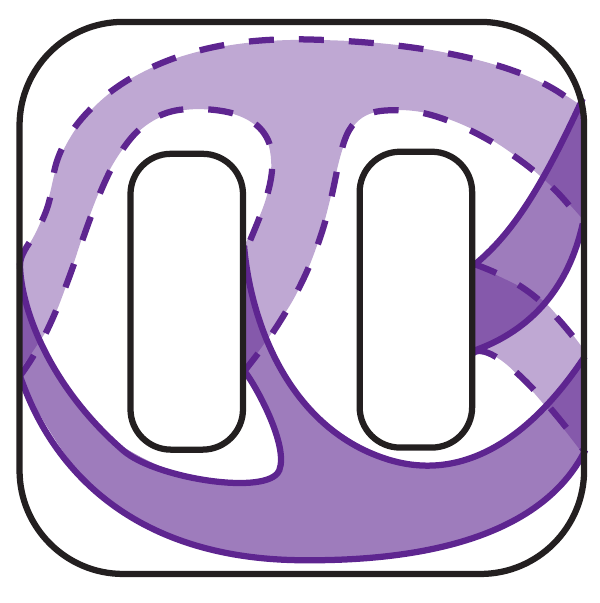}
    \end{subfigure}
    \begin{subfigure}{0.45\textwidth}
        \centering
        \includegraphics[width=0.9\linewidth]{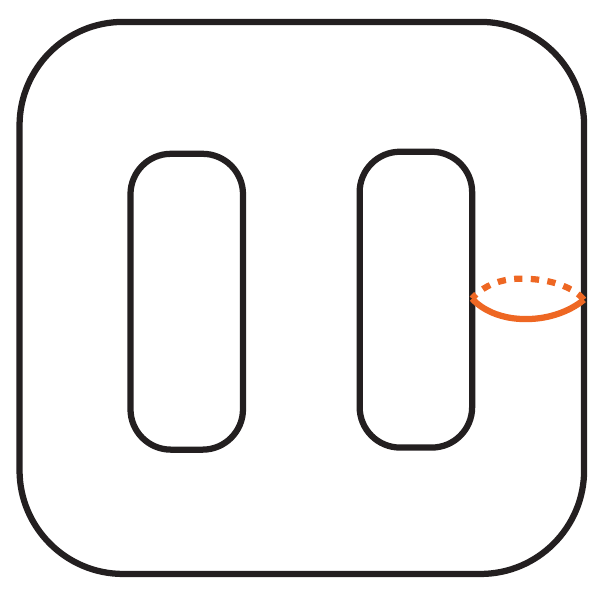}
    \end{subfigure}
    \caption{On the left is the twist knot $K_1$ as a separating curve, together with the subsurface it bounds on $\Sigma_2$. If we conjugate the Dehn twist $T_{K_1}$ by Dehn twists on the meridional curve on the right, we obtain the Dehn twist $T_{K_n}$ which is also a separating twist.}
    \label{fig:twistknotonsfce}
\end{figure}

To further illustrate the difference between the behavior of the Casson invariant and the $d$-invariant, we prove the following.

\finitetype*
\begin{proof}
We will show the $d$-invariant is not a finite type invariant.  Since the Euler characteristic of $HF_{red}$ is a linear combination of the $d$-invariant and the Casson invariant, which is a finite type invariant, the result for the Euler characteristic follows.

If $I$ is a finite-type invariant of homology three-spheres of order at most $n-1$, then $I(S^3_\Lambda(L)) = I(S^3)$ for any Brunnian link with $n$ components by definition \cite{ohtsuki}; here $\Lambda$ denotes a choice of $\pm 1$ for each component of $L$.  Therefore, it suffices to construct Brunnian links with an arbitrarily large number of components such that the $d$-invariant of some integral surgery is non-zero.  Begin with the Borromean rings and take the $(n-3)$-fold Bing double of one of the components. Call the resulting Brunnian $n$-component link $L_n$ (see Figure \ref{fig:iteratedBR}).  We claim that if $\Lambda = (1,\ldots, 1)$, then $d(S^3_\Lambda(L_n)) \neq 0$.  To see this, we first will show that $S^3_\Lambda(L_n) = S^3_1(Wh^{(n-3)}(T_{2,3}))$, where $Wh^{(k)}$ denotes the $k$-fold iteration of the positive Whitehead double satellite operation.  Indeed, let $K$ be a knot and $J$ the result of Bing doubling $K$.  Then, $(+1,+1)$-surgery on the Bing double can easily be seen to be $+1$-surgery on $K$ by blowing down one of the components of the Bing double.  We can view $(+1,\ldots,+1)$-surgery on $L_n$ as $(+1,\ldots,+1)$ on the $(n-3)$-fold Bing double of the right-handed trefoil, since the operations of Bing doubling and blowing down the two other components of the Borromean rings commute.  (This is because the linking numbers of the Borromean rings are zero.)  Hence, we see that $S^3_\Lambda(L_n) = S^3_1(Wh^{(n-3)}(T_{2,3}))$.

Hedden shows that $\tau(Wh^{(k)}(T_{2,3})) = 1$ for any $k$ \cite[Theorem 1.4]{HeddenWhitehead}, and so  $d(S^3_\Lambda(L_n)) \neq 0$ (see \cite[Equation (1.1)]{HomWu} and \cite[Proposition 1.6]{NiWu}, for example), completing the proof.
\end{proof}

\begin{figure}[ht]
    \centering
        \includegraphics[width=0.4\linewidth]{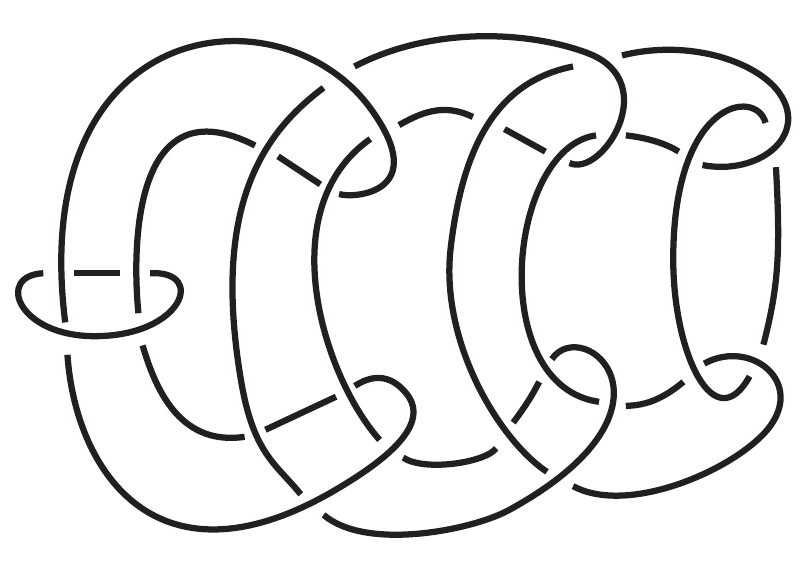}
    \caption{An example of a link $L_n$ in the proof of Theorem \ref{thm:finitetype} with $n=5$.}
    \label{fig:iteratedBR}
\end{figure}

\section{Families of homology spheres with bounded d-invariants}\label{sec:bounded}

With the goal of proving Theorem~\ref{thm:family}, we now show that many families of Torelli surgeries have a bounded effect on the $d$-invariants of homology spheres.  First, we study separating Dehn twists, then bounding pair maps, and then we allow for combinations of the two.

\subsection{Separating Dehn twists}
We begin with a technical Dehn surgery lemma that allows us to compare the $d$-invariants of surgery on a link in an arbitrary homology sphere to those of surgery on a link in $S^3$.

\begin{lem}\label{lem:surgery-convert}
Let $L = L_1 \cup \ldots \cup L_k$ be a link in an integer homology sphere $Y$.  Choose another link $L' = L'_1 \cup \ldots \cup L'_k$ in $S^3$ with the same pairwise linking numbers as $L$.  Then there is a constant $C$ such that,  
\[
|d(Y_{\vec{r}}(L)) - d(S^3_{\vec{r}}(L'))| \leq C,
\]
for any $\vec{r} = (r_1,\ldots,r_k) \in \mathbb{Q}^k$ such that $Y_{\vec{r}}(L)$ is an integer homology sphere.  
\end{lem}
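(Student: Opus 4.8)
The plan is to compare the two surgered manifolds through a single cobordism whose topology does not depend on $\vec r$, and then to appeal to the standard control of $d$-invariants under cobordisms with $b_1=0$.

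First I would isolate the genuinely fixed data. Write $E = Y \setminus \nu(L)$ and $E' = S^3 \setminus \nu(L')$ for the two link exteriors, compact $3$-manifolds with $k$ torus boundary components. Since $Y$ and $S^3$ are homology spheres, each $L_i$ and $L_i'$ is nullhomologous, and the hypothesis that $L$ and $L'$ have equal pairwise linking numbers lets me choose an orientation-reversing homeomorphism $\partial E \cong \partial E'$ carrying the meridian of $L_i$ to that of $L_i'$ and the Seifert-framed longitude of $L_i$ to that of $L_i'$; this same hypothesis is what guarantees that the rational surgery matrices agree, so that $S^3_{\vec r}(L')$ is a homology sphere exactly when $Y_{\vec r}(L)$ is. Gluing $E$ to $-E'$ along this identification produces a closed $3$-manifold $N = E \cup_\Sigma (-E')$, with $\Sigma = \partial E = \partial E'$, depending only on $Y$, $L$, and $L'$. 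Choose any compact $4$-manifold $\mathcal{W}$ with $\partial \mathcal{W} = N$ and, after surgering out a generating set of $\pi_1$, assume $\mathcal{W}$ is simply connected; its Betti numbers and signature are then fixed numbers attached to $N$.

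Now I would build the cobordism. Viewing $\mathcal{W}$ as a cobordism from $E$ to $E'$ rel the collar $\Sigma \times I$, attach a copy of $D^2 \times S^1 \times I$ along $\Sigma \times I$ for each boundary torus $T_i$, filling the $D^2$-factor in along the slope $r_i$ at both ends; call the result $W_{\vec r}$, so that $\partial W_{\vec r} = -Y_{\vec r}(L) \sqcup S^3_{\vec r}(L')$. A Mayer--Vietoris computation, using that $\mathcal{W}$ is simply connected and that every gluing region is a copy of $T^2 \times I$, shows that $H_1(W_{\vec r})$ vanishes, that $b_2(W_{\vec r})$ is independent of $\vec r$, and that $\sigma(W_{\vec r}) = \sigma(\mathcal{W})$ by Novikov additivity: the only $\vec r$-dependence sits in identifications of toroidal pieces, which are invisible to these invariants.

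Finally, I would apply the standard bound on the change of the $d$-invariant across a cobordism with $b_1=0$ between homology spheres (in the style of \cite{ozszdinvt}, and of the inequalities used in \cite{LMPC}): this yields a constant controlled by $b_2^+(W_{\vec r})$, $b_2^-(W_{\vec r})$ and $\sigma(W_{\vec r})$, hence by $N$ alone, which I take to be $C$. The step I expect to require the most care is the last one when $W_{\vec r}$ is indefinite, since a negative-definite cobordism only produces a one-sided inequality: to get the two-sided bound one either runs the inequality on $W_{\vec r}$ and on its orientation-reversal while absorbing a bounded $b_2^+$-penalty, or first routes the comparison through an intermediate homology sphere so that the cobordisms involved can be taken semi-definite. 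The remaining verifications --- that the Dehn fillings really yield $Y_{\vec r}(L)$ and $S^3_{\vec r}(L')$, and that the Mayer--Vietoris ranks are genuinely $\vec r$-independent --- are routine.
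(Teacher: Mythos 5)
Your cobordism $W_{\vec r}$ is well constructed, and the claim that its Betti numbers are independent of $\vec r$ checks out (the appeal to Novikov additivity for a gluing along $T^2\times I$ is not justified---Wall non-additivity is the relevant statement there---but this is harmless, since $|\sigma(W_{\vec r})|\le b_2(W_{\vec r})$ gives the needed bound anyway). The genuine gap is the final step: there is no two-sided bound on $|d(Y_2)-d(Y_1)|$ in terms of the homological invariants $b_1=0$, $b_2^{\pm}$, and $\sigma$ of a connecting cobordism. The Ozsv\'ath--Szab\'o $d$-invariant inequality is one-sided and requires negative definiteness; it does not extend to indefinite cobordisms ``up to a bounded $b_2^+$-penalty,'' and no such penalized version exists. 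A counterexample is built into this very paper: attaching a $+1$-framed $2$-handle along $T_{2,4n+1}$ gives a cobordism from $S^3$ to $S^3_{+1}(T_{2,4n+1})$ with $b_1=0$, $b_2=1$, $\sigma=1$ for every $n$, yet $d(S^3_{+1}(T_{2,4n+1}))=-2n$ is unbounded---exactly the phenomenon exploited in Corollary~\ref{cor:cayleygraph} and Lemma~\ref{lem:unbounded}. So no argument that remembers only the homology of a single cobordism between the two surgered manifolds can prove the lemma; your proposed fallback of routing through an intermediate homology sphere so that the pieces become semi-definite is where all the content lives, and it cannot be done with homological data alone.

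The paper's proof supplies the missing non-homological input: a genus bound. By \cite[Proposition 2.4]{LMPC} one passes from $(Y,L)$ to $(S^3,L'')$ by a fixed sequence of $\pm1$-surgeries on nullhomologous knots disjoint from the link, and by \cite{Matveev, MurakamiNakanishi} one passes from $L''$ to any $L'$ with the same pairwise linking numbers by a fixed sequence of Torelli surgeries in the link complement. Each such move changes the $d$-invariant of \emph{any} filling by at most a constant depending on the Seifert genus of the curve being twisted (Proposition~\ref{prop:onetorellielt}, Corollary~\ref{cor:surgery-bound}), and these genera---unlike the $d$-invariants themselves---are manifestly independent of $\vec r$. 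The triangle inequality over this fixed, finite sequence of genus-controlled steps is what produces the constant $C$; this geometric control is precisely what your single-cobordism approach is missing.
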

\begin{proof}
We carry this out in two steps.  In the first step, we will transform $(Y,L)$ into $(S^3,L'')$ for {\em some} link $L''$ with the same linking numbers as $L$ and show there is a constant $K_1 \in \mathbb{R}_{\geq 0}$ satisfying 
\[
|d(Y_{\vec{r}}(L)) - d(S^3_{\vec{r}}(L''))| \leq K_1,
\]
for any $\vec{r} = (r_1,\ldots,r_k) \in \mathbb{Q}^k$ such that $Y_{\vec{r}}(L)$ is an integer homology sphere.  Then, we relate surgery on $L''$ to surgery on $L'$, for an arbitrary $L'$ with the same linking numbers.  

By \cite[Proposition 2.4, Remark 2.6]{LMPC}, there is a link $L''$ in $S^3$ and a sequence of surgeries transforming $(S^3, L'')$ into $(Y,L)$; furthermore, for each $i$, the $i$th component in this sequence of surgeries is nullhomologous in the complement of $L''$ after the previous $i-1$ surgeries, and all the surgeries have coefficients $\pm 1$.  In particular, there is a sequence of $\pm 1$-surgeries on nullhomologous knots transforming $S^3_{\vec{r}}(L'')$ into $Y_{\vec{r}}(L)$ whose genera at each step is bounded independent of $\vec{r}$.  By Corollary \ref{cor:surgery-bound} (or \cite[Proposition 3.6]{LMPC}), we see that there is a constant $K_1 \in \mathbb{R}_{\geq 0}$ such that 
\begin{equation}\label{eq:YtoS3}
|d(Y_{\vec{r}}(L)) - d(S^3_{\vec{r}}(L''))| \leq K_1,
\end{equation}
for any $\vec{r} = (r_1,\ldots,r_k) \in \mathbb{Q}^k$ such that $Y_{\vec{r}}(L)$ is an integer homology sphere.  

Now, we convert $L''$ to $L'$ and study the change in $d$-invariants.  By a sequence of Torelli surgeries in the complement of $L''$, we can transform $(S^3,L'')$ into $(S^3,L')$, where $L'$ is any link with the same pairwise linking numbers as $L''$ (i.e. the same as $L$).  See \cite[Proof of Lemma 2]{Matveev} or \cite{MurakamiNakanishi}.  Each Torelli surgery may occur along a different handlebody of possibly differing genus.  Nonetheless, we can transform $S^3_{\vec{r}}(L'')$ into $S^3_{\vec{r}}(L')$ using a sequence of Torelli surgeries such that the length of the sequence, the genera of the relevant surfaces, and the number of separating Dehn twists and/or bounding pair maps to describe them are all independent of $\vec{r}$.  Theorem~\ref{thm:lineargrowth} (or \cite[Theorem 1.9]{LMPC}) implies there is an integer $K_2$ such that 
\begin{equation}\label{eq:S3toS3}
|d(S^3_{\vec{r}}(L'')) - d(S^3_{\vec{r}}(L'))| \leq K_2,
\end{equation}
for any $\vec{r} = (r_1,\ldots,r_k) \in \mathbb{Q}^k$ such that $S^3_{\vec{r}}(L'')$ is an integer homology sphere.  Of course, this is the same as the $\vec{r}$ which will make $Y_{\vec{r}}(L)$ an integer homology sphere, so let $C=K_1+K_2$ and the result follows from applying the triangle inequality to \eqref{eq:YtoS3} and \eqref{eq:S3toS3}.  
\end{proof}

As a warm-up towards proving Theorem~\ref{thm:family}, we begin by showing that iterating a single separating twist produces only a bounded change in $d$-invariants.  
\begin{prop}
Let $Y$ be an integer homology sphere and $S$ an embedded surface in $Y$ with genus $g$.  Let $\phi$ be a separating twist.  Then, $\{d(Y_{S,\phi^n})\}_{n \in \Z}$ is bounded.  
\end{prop}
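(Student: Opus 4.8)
The plan is to realize the family $\{Y_{S,\phi^n}\}_{n\in\Z}$ as surgeries on a single knot in $S^3$ (or in $Y$) with varying rational coefficients and then invoke the general surgery-conversion lemma. By Lemma~\ref{lem:twist-surgery}, since $\phi$ is a separating twist along a curve $\gamma\subset S$ which is automatically nullhomologous in $Y$, we have $Y_{S,\phi^n} = Y_{s-\frac1n}(\gamma)$, where $s$ is the surface framing of $\gamma$ viewed as an integer. Thus the whole family is obtained from $Y$ by surgery on the \emph{fixed} knot $L = \gamma$ with the one-parameter family of coefficients $\vec r = (s - \tfrac1n)$, all of which yield integer homology spheres. (Note a separating curve has trivial linking with everything, so the linking data here is vacuous.)

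Next I would apply Lemma~\ref{lem:surgery-convert} to the one-component link $L=\gamma$ in $Y$, choosing as comparison link $L'$ an \emph{unknot} $U$ in $S^3$ (which trivially has the same self-linking data, namely none). The lemma produces a constant $C$ with $|d(Y_{s-1/n}(\gamma)) - d(S^3_{s-1/n}(U))| \le C$ for all $n$ with $Y_{s-1/n}(\gamma)$ an integer homology sphere. But $S^3_{s-1/n}(U)$ is surgery on the unknot: for this to be an integer homology sphere we need the coefficient to be $\pm1$, hence $S^3_{s-1/n}(U) \cong S^3$ whenever it is a homology sphere, so its $d$-invariant is $0$. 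Therefore $|d(Y_{S,\phi^n})| \le C$ for all $n$, which is exactly the claimed boundedness. An essentially equivalent route avoids invoking the unknot: Corollary~\ref{cor:surgery-bound} already tells us that $1/n$-surgery on a \emph{fixed} knot has bounded $d$-invariant only up to a linear factor in $n$, so that corollary is \emph{not} sufficient, and one genuinely needs the rational-coefficient uniformity packaged in Lemma~\ref{lem:surgery-convert} — this is the reason the proposition is stated separately rather than following trivially from Theorem~\ref{thm:lineargrowth}.

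The one point requiring care — and the main obstacle — is verifying that Lemma~\ref{lem:surgery-convert} applies with a constant $C$ that is genuinely independent of $n$; this is already built into the statement of that lemma (the constant depends only on $(Y,L)$ and $L'$, not on $\vec r$), so the work is really just checking that the integrality hypothesis "$Y_{\vec r}(L)$ is an integer homology sphere" is met for each value $s - \tfrac1n$, which holds because $Y_{S,\phi^n}$ is a homology sphere by construction (it is Torelli surgery, or directly: $1/n$-type surgery on a nullhomologous knot in a homology sphere). Assembling these observations gives the bound, and one should remark that the bound obtained this way is not effective, in contrast to Theorem~\ref{thm:preciselineargrowth}; a sharper, effective bound would require understanding the $d$-invariants of the specific surgeries $Y_{s-1/n}(\gamma)$ directly, e.g.\ via the mapping cone formula, which we do not pursue here.
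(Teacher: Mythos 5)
Your proposal is correct and takes essentially the same route as the paper: Lemma~\ref{lem:twist-surgery} converts $\phi^n$ into $-1/n$-surgery on the separating curve (the surface framing $s$ is $0$ here, since the curve bounds a subsurface of $S$), and Lemma~\ref{lem:surgery-convert} with the unknot as comparison link reduces everything to $S^3_{-1/n}(U)=S^3$, whose $d$-invariant vanishes. No gaps.
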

\begin{proof}
Since $\phi$ is a separating Dehn twist, $Y_{\phi^n}$ can be described as $-1/n$-surgery on a knot $K$ in $Y$ by Lemma \ref{lem:twist-surgery}.  By Lemma~\ref{lem:surgery-convert}, for any knot $J$ in $S^3$, the quantity $|d(Y_{S,\phi^n}) - d(S^3_{-1/n}(J))|$ is bounded independent of $n$.  Therefore, choose $J$ to be unknotted, and $S^3_{-1/n}(J) = S^3$ for all $n$.  This completes the proof.  
\end{proof}

We can quickly prove a stronger version, which is Theorem~\ref{thm:family} if one restricts only to separating twists.  
\begin{prop}\label{prop:separating}
Let $Y$ be an integer homology sphere and $S$ any genus $g$ surface embedded in $Y$.  Let $\phi_1,\ldots,\phi_k$ be separating Dehn twists on $S$ and let $\Gamma$ be the set of the elements consisting of $\phi_1^{n_1} \cdots \phi_k^{n_k}$ with $n_1,\ldots,n_k \in \Z$.  Then, the collection of $|d(Y_\phi)|$ indexed over $\phi \in \Gamma$ is bounded.
\end{prop}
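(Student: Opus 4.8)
The plan is to reduce the problem to a statement about a fixed link in $S^3$ via Lemma~\ref{lem:surgery-convert}, exactly as in the warm-up proposition for a single separating twist. First I would use the factorization perspective from Section~\ref{sec:background}: writing $\phi = \phi_1^{n_1}\cdots\phi_k^{n_k}$, the manifold $Y_\phi$ is obtained by surgery on a $k$-component link $L = L_1 \cup \cdots \cup L_k$ in $Y$, where $L_i$ is a parallel copy of the separating curve $\gamma_i$ pushed onto $S \times \{1/i\}$ and the surgery coefficient on $L_i$ is $s_i - 1/n_i$, with $s_i$ the surface framing of $\gamma_i$. The crucial point, which I would emphasize, is that the link $L$ itself (its components and their pairwise linking numbers) does \emph{not} depend on the exponents $n_1,\ldots,n_k$; only the surgery coefficient vector $\vec r = (s_1 - 1/n_1,\ldots,s_k-1/n_k)$ varies. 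Moreover, since each $\gamma_i$ is a separating curve on $S$, it is nullhomologous in $Y$, and in fact the components $L_i$ all lie on disjoint parallel copies of $S$ and bound disjoint subsurfaces there; hence all pairwise linking numbers $\mathrm{lk}(L_i, L_j)$ are zero. (I should also invoke Lemma~\ref{lem:framinglemma} to justify that the surface framings $s_i$ are well-defined integers independent of the order in which the surgeries are performed, so the coefficient vector is genuinely of the stated form for \emph{all} choices of exponents.)

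Next I would apply Lemma~\ref{lem:surgery-convert} with this link $L$: choose any link $L' = L'_1 \cup \cdots \cup L'_k$ in $S^3$ with the same (all-zero) pairwise linking numbers — for instance, a $k$-component unlink — and take each $L'_i$ to be an unknot. Then $S^3_{\vec r}(L') = S^3$ for every coefficient vector $\vec r$ of the form $(s_1 - 1/n_1, \ldots, s_k - 1/n_k)$ with $n_i \in \Z \setminus \{0\}$, since each component is an unknot with a $1/m$-type surgery coefficient after the appropriate integer shift; more carefully, surgery on an unknot with coefficient $p/q$ gives a lens space, and one must pick $L'_i$ to be an unknot with the right framing so that $s_i - 1/n_i$-surgery is $S^3$ — alternatively, one observes that $s_i - 1/n_i$-surgery on an unknot gives $S^3$ precisely when it is a $\pm 1/n$-surgery, which one can arrange by absorbing the integer $s_i$ into the framing convention of the unknot $L'_i$. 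Lemma~\ref{lem:surgery-convert} then yields a constant $C$, \emph{depending only on $Y$, $S$, $L$, $L'$} and not on the $n_i$, with
\[
|d(Y_\phi) - d(S^3_{\vec r}(L'))| = |d(Y_\phi) - d(S^3)| \leq C
\]
for every $\phi \in \Gamma$. Since $d(S^3) = 0$, this gives $|d(Y_\phi)| \leq C$ uniformly over $\Gamma$, which is the claim.

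The main obstacle I anticipate is the bookkeeping around framings and the normalization of the comparison link $L'$: Lemma~\ref{lem:surgery-convert} only guarantees that $L'$ has the same pairwise linking numbers as $L$, so each $L'_i$ is \emph{a priori} only an unknot up to choosing its framing, and one must be careful that the coefficient vector $\vec r$ realizing $Y_\phi$ as surgery on $L$ is literally the same vector that realizes surgery on $L'$ — this is exactly what the phrase ``for any $\vec r$ such that $Y_{\vec r}(L)$ is an integer homology sphere'' in the lemma buys us, but one must check that $s_i - 1/n_i$-surgery on the chosen unknot $L'_i$ really does return $S^3$, i.e. that the $s_i$ are independent of $n_i$ (handled by Lemma~\ref{lem:framinglemma}) and that $s_i - 1/n_i$ is a slope of the form $\pm 1/n$ relative to the appropriate meridian-longitude basis on $L'_i$. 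Once these framing conventions are pinned down, the argument is a clean application of Lemma~\ref{lem:surgery-convert}, just as in the single-twist warm-up, with the unknot chosen so that the target manifold in $S^3$ is always $S^3$ itself. A remark worth adding afterward is that this immediately implies boundedness of $\{d(Y_\phi) : \phi \in H\}$ whenever $H$ is an abelian subgroup of the Johnson kernel generated by finitely many separating twists — and, combined with the bounding pair case treated next, for any finitely generated abelian subgroup of Torelli generated by separating twists and bounding pair maps.
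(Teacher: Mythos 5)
Your proposal is correct and follows essentially the same route as the paper's proof: express $Y_\phi$ as surgery on the fixed link $L$ of separating curves pushed onto disjoint parallel copies of $S$, observe that all pairwise linking numbers vanish, and apply Lemma~\ref{lem:surgery-convert} against the unlink. The one loose end you flag as the main obstacle --- whether $s_i - 1/n_i$-surgery on the unknot returns $S^3$ --- dissolves once you note that each separating curve bounds a subsurface of $S \times \{1/i\}$, which is an embedded Seifert surface for $L_i$ in $Y$, so the surface framing $s_i$ \emph{equals} the Seifert framing, i.e.\ $s_i = 0$ and the coefficient vector is literally $(-1/n_1,\ldots,-1/n_k)$; there is no need (and indeed no license) to ``absorb'' an integer into the framing of $L_i'$, since the vector $\vec{r}$ in Lemma~\ref{lem:surgery-convert} is applied to both links with respect to their canonical Seifert framings.
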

\begin{proof}
Identify a neighborhood of $S$ in $Y$ with $S \times [0,1]$.  Let $\phi_i$ denote a Dehn twist along $L_i \subset S \times \{1/i\}$.  Write $L = L_1 \cup \ldots \cup L_k$, a link in $Y$.  Since each $L_i$ is separating on $S\times \{1/i\}$ and $L_j$ is disjoint from $S \times \{1/i\}$ for $i\neq j$, $L_i$ bounds a surface disjoint from $L_j$, and we see that all pairwise linking numbers in $L$ are $0$.

From the above description, it follows that if $\phi = \phi_1^{n_1} \circ \ldots \circ \phi_k^{n_k}$, then $Y_\phi$ is obtained by performing $(-\frac{1}{n_1},\ldots,-\frac{1}{n_k})$-surgery on $L$.  We would like to see that the set of $d$-invariants of this family of manifolds is bounded.    By Lemma~\ref{lem:surgery-convert}, we can replace $Y_\phi$ with $(-\frac{1}{n_1},\ldots,-\frac{1}{n_k})$-surgery on any link in $S^3$ with pairwise linking zero.  Choose the unlink and the result follows.  
\end{proof}

\subsection{Bounding pair maps}
Next, we consider the case of words of bounding pair maps similar to Proposition~\ref{prop:separating} and show these $d$-invariants are also bounded.  Our strategy will be similar to that used in Proposition~\ref{prop:separating}.  We begin with some technical preliminaries.  


\begin{lem}\label{lem:cable}
Let $L = L_1 \cup L_2$ be an oriented two-component link in a homology sphere $Y$ and let $L'$ be another framed link disjoint from $L$.  Suppose that $L_1, L_2$ cobound an annulus in the exterior of $L'$ and have linking number $-\ell$ with this orientation.  Consider meridians $\mu_i$ of $L_i$.   Perform $\ell$-surgery on $L_1$ and on $L_2$, $n$-surgery on the meridian of $L_1$ and $-n$-surgery on the meridian of $L_2$.  The result is $Y$ and $L'$ is transformed into a link of the same framed isotopy type.    
\end{lem}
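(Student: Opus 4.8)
The plan is to perform the entire surgery inside a neighborhood of the annulus $A$ and then, after eliminating the two meridians, recognize it as a Torelli surgery by a \emph{trivial} bounding pair map.

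First I would fix a regular neighborhood $N$ of $A$ and arrange that $L_1\cup L_2$ lies on the torus $\partial N$ with $L_1$ and $L_2$ cobounding an annular subsurface of $\partial N$, and that each $\mu_i$ is a meridian of $L_i$ contained in a ball in $N$ disjoint from the other three surgery curves. Since $A$, and hence $N$, is disjoint from $L'$, the whole surgery is supported in the exterior of $L'$; consequently $L'$ is literally untouched, and it suffices to show that the surgery on $L_1\cup L_2\cup\mu_1\cup\mu_2$ recovers $Y$ by a diffeomorphism that is the identity outside $N$.

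Second, I would absorb each meridian using the slam-dunk move (i.e.\ the fact that integer surgery on a knot together with integer surgery on a meridian is a single rational surgery on the knot): since $\mu_i$ is a meridian of $L_i$, performing $\ell$-surgery on $L_1$ and $n$-surgery on $\mu_1$ is the same as $(\ell-\tfrac1n)$-surgery on $L_1$, and performing $\ell$-surgery on $L_2$ and $-n$-surgery on $\mu_2$ is the same as $(\ell+\tfrac1n)$-surgery on $L_2$. The key computation is that the surface framing of each of $L_1,L_2$ with respect to $A$ (equivalently, with respect to $\partial N$) is exactly $\ell$: the coherent push-off of $L_1$ within $A$ is isotopic to $L_2$ with the reversed orientation, since the two boundary components of an oriented annulus are anti-parallel, so the surface framing equals $-\mathrm{lk}(L_1,L_2)=\ell$, and symmetrically for $L_2$. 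Hence, by the Lickorish trick applied on the surface $S=\partial N$ (Lemmas~\ref{lem:twist-surgery} and~\ref{lem:boundingpair-surgery}), the residual surgery $(L_1:\ell-\tfrac1n)\cup(L_2:\ell+\tfrac1n)$ is precisely $Y_{S,(T_{L_1}T_{L_2}^{-1})^n}$. But $L_1$ and $L_2$ are isotopic as unoriented curves on $S$, so $T_{L_1}=T_{L_2}$ in $\mathrm{Mod}(S)$ and $(T_{L_1}T_{L_2}^{-1})^n$ is the identity mapping class; therefore $Y_{S,(T_{L_1}T_{L_2}^{-1})^n}=Y$. As every step above is supported in $N$, the link $L'$ is carried to a link of the same framed isotopy type, which completes the argument.

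The step I expect to require the most care is the framing and orientation bookkeeping: checking that the slam-dunk turns the data $(\ell,\pm n)$ attached to $(L_i,\mu_i)$ into the coefficient $\ell\mp\tfrac1n$, and that the relevant surface framing is $+\ell$ rather than $-\ell$ — this is exactly where the hypothesis $\mathrm{lk}(L_1,L_2)=-\ell$ is used. Once $L_1$ and $L_2$ are seen to have surface framing equal to the integer parts of their surgery coefficients, identifying the residual surgery with the trivial bounding-pair Torelli surgery $Y_{S,\mathrm{id}}=Y$ is immediate.
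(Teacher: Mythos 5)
Your proof is correct and follows essentially the same route as the paper's: both pass to a neighborhood of the annulus, identify the surgery (after absorbing the meridians) with the $n$th power of a bounding pair map on the boundary torus via Lemma~\ref{lem:boundingpair-surgery}, and observe that this mapping class is the identity because $L_1$ and $L_2$ are isotopic curves on that torus. Your write-up simply makes explicit the slam-dunk and surface-framing bookkeeping that the paper leaves implicit.
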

\begin{proof}
Let $A$ be the annulus that $L$ cobounds.  Taking a neighborhood of $A$ in $Y$, we see that $L_1$ and $L_2$ cobound annului on a torus $T$ in $Y$.  By Lemma~\ref{lem:boundingpair-surgery}, we see that the given surgery on this link corresponds to removing a neighborhood of $T$ and regluing by a bounding pair map along the curves corresponding to $L_1, L_2$ taken to the $n$th power.  This bounding pair map is isotopic to the identity mapping class on $T$, and so the surgery operation preserves $Y$ and $L'$.   
\end{proof}

\begin{rmk}
This can also be proved directly using Kirby calculus.
\end{rmk}

We will now use this lemma to prove the following. 

\begin{prop}\label{prop:bounding-pair}
Let $\phi_1,\ldots,\phi_k$ be bounding pair maps on an embedded surface $S$ in a homology sphere $Y$.  Then, the set $\{d(Y_{S, \phi_1^{n_1} \cdots \phi_k^{n_k}})\}$ is bounded.  
\end{prop}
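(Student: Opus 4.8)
The plan is to run the strategy of Proposition~\ref{prop:separating}, but with the unlink replaced by a more carefully chosen link in $S^3$ and with the (here nontrivial) surface framings replaced by a genuine annulus. Concretely, the goal is to exhibit $Y_{S,\phi_1^{n_1}\cdots\phi_k^{n_k}}$ as surgery on a \emph{fixed} link $L\subset Y$ in which only the surgery coefficient vector $\vec r$ depends on $(n_1,\dots,n_k)$, transport the problem to $S^3$ at uniformly bounded cost via Lemma~\ref{lem:surgery-convert}, and then see that the resulting surgery on $S^3$ is trivial by Lemma~\ref{lem:cable}.

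First I would set things up exactly as in Proposition~\ref{prop:separating}. Identify a neighborhood of $S$ in $Y$ with $S\times[0,1]$ and place the bounding pair $\gamma_i=\gamma_i^1\cup\gamma_i^2$ defining $\phi_i$ on the level $S\times\{t_i\}$ for distinct $t_i\in(0,1)$. Since $Y$ is a homology sphere each component $\gamma_i^a$ is nullhomologous, so Lemma~\ref{lem:boundingpair-surgery} applies on each level; combined with the bicollar discussion of Section~\ref{sec:background} and with Lemma~\ref{lem:framinglemma} — which guarantees that the integer surface framing $s_i$ of $\gamma_i^1$ computed in $Y$ is unaffected by performing the other Torelli surgeries — this realizes $Y_{S,\phi_1^{n_1}\cdots\phi_k^{n_k}}$ as surgery on the fixed $2k$-component link $L=\bigcup_i(\gamma_i^1\cup\gamma_i^2)\subset Y$ with coefficients $s_i-\tfrac{1}{n_i}$ on $\gamma_i^1$ and $s_i+\tfrac{1}{n_i}$ on $\gamma_i^2$. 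Only the coefficient vector varies with the $n_i$, so Lemma~\ref{lem:surgery-convert} will reduce everything to surgery on $S^3$ once a link $L'\subset S^3$ with the same pairwise linking numbers as $L$ is chosen.

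Next I would record those linking numbers: inside the proof of Lemma~\ref{lem:boundingpair-surgery} one sees that $lk(\gamma_i^1,\gamma_i^2)=s_i$, and for $i\neq j$ the class $[\gamma_i^1]-[\gamma_i^2]$ bounds the subsurface it cuts off on $S\times\{t_i\}$, which is disjoint from $S\times\{t_j\}$, so $lk(\gamma_i^a,\gamma_j^b)=:\lambda_{ij}$ is independent of $a$ and $b$. The key construction is then to pick any link $\beta_1,\dots,\beta_k\subset S^3$ with $lk(\beta_i,\beta_j)=\lambda_{ij}$ for $i\neq j$, and set $L'=\bigcup_i(\beta_i\cup\beta_i^{(s_i)})$, where $\beta_i^{(s_i)}$ is the $s_i$-framed pushoff of $\beta_i$ taken in a small tubular neighborhood. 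Since pushoffs do not change mutual linking numbers, $L'$ has exactly the linking matrix of $L$; and by construction $\beta_i$ and $\beta_i^{(s_i)}$ cobound a pushoff annulus contained in that small neighborhood, hence in the exterior of the rest of $L'$. Applying Lemma~\ref{lem:cable} to the pair $(\beta_i,\beta_i^{(s_i)})$ — after slam-dunking the two meridians, its surgery is precisely coefficient $s_i-\tfrac{1}{n_i}$ on $\beta_i$ and $s_i+\tfrac{1}{n_i}$ on $\beta_i^{(s_i)}$, giving back $S^3$ and leaving the remaining components framed-isotopic — and iterating over $i=1,\dots,k$, I would conclude that $S^3_{\vec r}(L')=S^3$ for every admissible $\vec r=(s_1-\tfrac{1}{n_1},s_1+\tfrac{1}{n_1},\dots,s_k-\tfrac{1}{n_k},s_k+\tfrac{1}{n_k})$. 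Lemma~\ref{lem:surgery-convert} then produces a constant $C$ independent of $(n_1,\dots,n_k)$ with
\[
|d(Y_{S,\phi_1^{n_1}\cdots\phi_k^{n_k}})-d(S^3)|\le C,
\]
and since $d(S^3)=0$ the family is bounded; combined with Proposition~\ref{prop:separating} this also supplies the bounding pair part of Theorem~\ref{thm:family}.

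The main obstacle — essentially the only step that is not bookkeeping — is engineering $L'$ so that Lemma~\ref{lem:cable} applies. In $Y$ the curves $\gamma_i^1,\gamma_i^2$ only cobound a subsurface of $S$, possibly of positive genus, so one cannot trivialize the bounding pair surgery directly inside $Y$; it is precisely the passage to $S^3$ afforded by Lemma~\ref{lem:surgery-convert} that lets us replace that subsurface with an honest annulus (the pushoff annulus of $\beta_i$) while keeping every pairwise linking number intact. A smaller but genuinely needed point is the uniformity in the exponents $n_j$ of the integer surface framings $s_i$, which is exactly the content of Lemma~\ref{lem:framinglemma}.
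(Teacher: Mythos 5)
Your proposal is correct and follows essentially the same route as the paper: realize the family as surgeries on one fixed link, match pairwise linking numbers, replace each bounding pair by a pair of parallel pushoffs cobounding an annulus in $S^3$ (the paper phrases this as a $(2,2\ell_i)$-cable of a link $\mathbb{K}$ with the right mutual linkings), kill the model surgeries via Lemma~\ref{lem:cable}, and transfer the bound back with Lemma~\ref{lem:surgery-convert}. The only cosmetic difference is that the paper attaches meridional ``earrings'' so as to work with integer framings throughout, whereas you keep the rational coefficients $s_i\mp\tfrac{1}{n_i}$ and connect to Lemma~\ref{lem:cable} by a slam dunk.
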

\begin{proof}
The strategy is to write $Y_{\phi_1^{n_1} \cdots \phi_k^{n_k}}$ as some surgery on a fixed link in $Y$ (independent of $n_1,\ldots,n_k$) and then use Torelli surgeries (again independent of $n_1,\ldots, n_k$) to transform this into a link in $S^3$ whose surgery we can compute.  To streamline notation, let $Y(n_1,\ldots,n_k)$ denote $Y_{\phi_1^{n_1} \cdots \phi_k^{n_k}}$.  

As in the case of separating twists, identify a neighborhood of $S$ with $S \times I$.  For each $1 \leq i \leq k$, let $\phi_i$ be represented by $T_{\alpha_i} T^{-1}_{\beta_i}$ where $T_\gamma$ denotes a Dehn twist along $\gamma$ and $\alpha_i, \beta_i$ are curves on $S \times \{1/i\}$.  We may view $\alpha_i, \beta_i$ as a 2-component link $L_i$ in $Y$.  Write the pairwise linking as $\ell_i$, where $\alpha_i$ and $\beta_i$ are oriented to be homologous in $S$.  Consider the 4-component link $M_i$ which is obtained by adding meridians to both $\alpha_i$ and $\beta_i$, called earrings.  By Lemma~\ref{lem:boundingpair-surgery} and the discussion corresponding to Figure \ref{fig:twist-surgery}, $Y_{S,\phi_i^{n_i}}$ is obtained by integral surgery on $M_i$ where each of $\alpha_i, \beta_i$ has framing $\ell_i$, the earring on $\alpha_i$ has framing $-n_i$ and the earring on $\beta_i$ is given framing $n_i$.  We will use these earrings to avoid rational surgery coefficients and therefore for the rest of this section we will refer to framed links rather than rational surgeries on links.

By construction, each pair $\alpha_i$ and $\beta_i$ is disjoint in $Y$.  Therefore, we can make sense of $\mathbb{L} = \bigcup_i L_i$ and $\mathbb{M} = \bigcup_i M_i$ as links in $Y$.  In this case, $Y(n_1,\ldots,n_k)$ can be expressed as surgery on $\mathbb{M}$ with the same surgery coefficients as we used for the individual $\phi_i^{n_i}$, i.e. $\ell_i$ on $\alpha_i, \beta_i$ and $-n_i, n_i$ on the meridians of $\alpha_i$ and $\beta_i$ respectively.  Therefore, we can realize all $Y(n_1,\ldots,n_k)$ as surgery on a single link, where the only dependence on the $n_i$ is in terms of the framings.  Now, following the strategy of the previous subsection, we would like to apply Torelli transformations to a link in $S^3$ we understand with the same pairwise linkings and then use it to bound the $d$-invariant of our original link.

We now construct a candidate link in $S^3$. First, because $\alpha_i$ and $\beta_i$ cobound a surface in  $S \times \{1/i\}$ which is disjoint from $S \times \{1/j\}$ for any $j \neq i$, $\alpha_i$ and $\beta_i$ are homologous in the complement of $\alpha_j$ in $Y$ for $j \neq i$.  Therefore, for $j \neq i$, $lk(\alpha_i, \alpha_j) = lk(\beta_i, \alpha_j)$.  Applying the same for $\beta_j$, and also switching the roles of $i$ and $j$, we see
\[
lk(\alpha_i, \alpha_j) = lk(\beta_i, \alpha_j) = lk(\alpha_i, \beta_j) = lk(\beta_i, \beta_j).  
\]
Call this linking number $\eta_{ij}$.  Note that the earrings of $\alpha_i$ and $\beta_i$ are geometrically unlinked from any component with index $j \neq i$.  

For any link $\mathbb{J}$ in $S^3$ with the same pairwise linking as $\mathbb{M}$, Lemma~\ref{lem:surgery-convert} tells us that
\begin{equation}\label{eq:surgery-d-convert}
|d(Y_\Lambda(\mathbb{M})) - d(S^3_\Lambda(\mathbb{J}))| \leq C, 
\end{equation}
where $C$ is a constant independent of the framing $\Lambda$ on $\mathbb{M}$.  

Let $\mathbb{K} = K_1 \cup \ldots \cup K_k$ be any link in $S^3$ with pairwise linking $\eta_{ij}$ for each component.  Let $\mathbb{J}$ be the link obtained by applying a $(2,2\ell_i)$ cable to $K_i$, which now doubles the number of components, and adding earrings to each of these $2k$ components.  See Figure~\ref{fig:cablingandearrings}.   It is easy to see that $\mathbb{J}$ has the same pairwise linking as $\mathbb{M}$.  If we let $Z(n_1,\ldots,n_k)$ denote the result of surgery on $\mathbb{J}$ with surgery coefficients corresponding to those of $\mathbb{M}$, then we see from \eqref{eq:surgery-d-convert} that 
\begin{equation}\label{eq:d-bounded}
|d(Y(n_1,\ldots, n_k) - d(Z(n_1,\ldots,n_k))| \leq C,
\end{equation}
for some constant $C$ independent of $n_1,\ldots, n_k$.  Hence, it suffices to show that $d(Z(n_1,\ldots, n_k))$ is bounded independent of $n_1,\ldots, n_k$ and then we can apply the triangle inequality as in the proof of Lemma \ref{lem:surgery-convert}.

\begin{figure}
    \centering
    \begin{subfigure}{0.3\textwidth}
        \labellist
\small\hair 2pt
\pinlabel $\alpha_1$ at 57 246
\pinlabel $\alpha_2$ at 270 225
\pinlabel $\beta_1$ at 76 26
\pinlabel $\beta_2$ at 190 255
\endlabellist
        \centering
        \includegraphics[width=\linewidth]{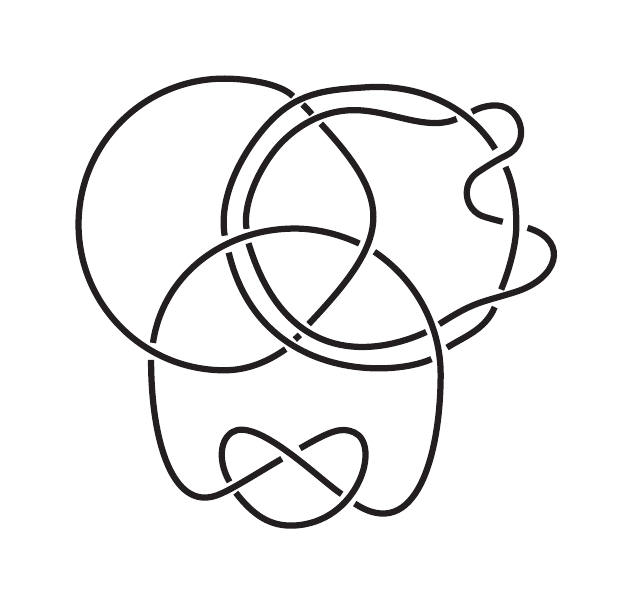}
               \caption{$\mathbb{L}$}
    \end{subfigure}
    \begin{subfigure}{0.32\textwidth}
        \labellist
\small\hair 2pt
\pinlabel $K_1$ at 35 110
\pinlabel $K_2$ at 270 110
        \endlabellist
        \centering
        \includegraphics[width=\linewidth]{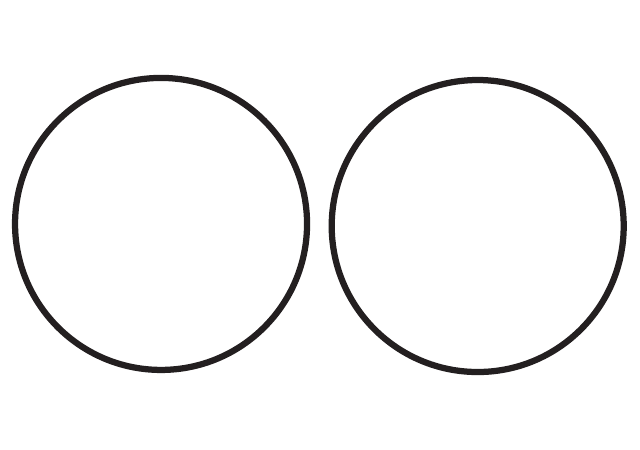}
        \caption{$\mathbb{K}$}
    \end{subfigure}
    \begin{subfigure}{0.32\textwidth}
                \labellist
            \small\hair 2pt

        \pinlabel $\mathbb{J}_1$ at 75 15 
        \pinlabel $\mathbb{J}_2$ at 230 15 
                \endlabellist
        \centering
        \includegraphics[width=\linewidth]{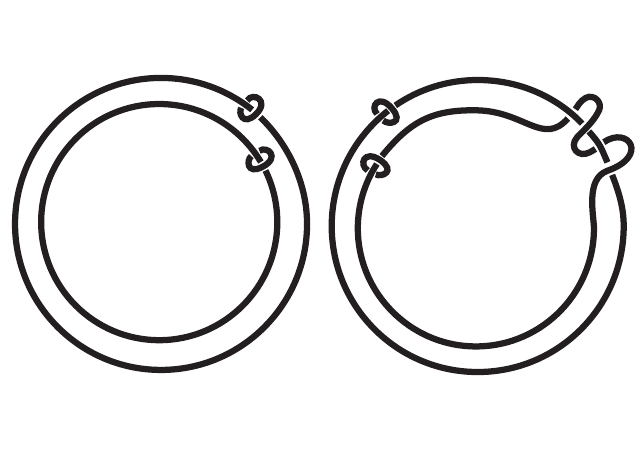}
        \caption{$\mathbb{J}$}
    \end{subfigure}
    \caption{Here $\mathbb{L}$ is a 4-component link representing curves associated to two bounding pair maps; the two furthest left components correspond to the first map $\phi_1$ and have $\ell_1=0$ and the remaining two components correspond to the second map $\phi_2$ and have $\ell_2=2$. We see that $\eta_{12}=\eta_{21}=0$. We now construct $\mathbb{K}$ as a $2$-component link with linking number $\eta_{12}=0$. Since this link is arbitrary, in the diagram we have the simplest link with this linking number. Adding in the meridian of each component will result in the 8-component link $\mathbb{M}$. We can now build $\mathbb{J}$ by cabling the components of $\mathbb{K}$ in order to have the same linking numbers as $\mathbb{L}$. We have taken the $(2,0)$ cable of $K_1$ and the $(2,4)$ cable of $K_2$ and included the meridians of each component to obtain $\mathbb{J}$. Now, by \cite{Matveev} a sequence of Torelli surgeries takes Dehn surgery on $\mathbb{M}$ with framings described in the proof of Proposition \ref{prop:bounding-pair} to Dehn surgery on $\mathbb{J}$ described in this same proof. This sequence is independent of the powers $n_1$ and $n_2$, so we obtain a single upper bound for the $d$-invariants of Torelli surgery by all words $\phi_1^{n_1}\phi_2^{n_2}$ (and similarly  $\phi_2^{n_2}\phi_1^{n_1}$).} 
    \label{fig:cablingandearrings}
\end{figure}

Fix $n_1,\ldots, n_k$.  For notation, let $\mathbb{J}_i$ denote the $(2,2\ell_i)$ cable of $K_i$ with meridians attached.  We will run the surgery in steps.  First, do $\ell_1$-surgery on the two components of the $(2,2\ell_1)$ cable of $K_1$, $n_1$-surgery on one meridian, and $-n_1$ surgery on the other meridian.  By Lemma~\ref{lem:cable} and the construction of $\mathbb{J}_1$, this surgery on $\mathbb{J}_1$ produces $S^3$ and preserves the framed link $\mathbb{J} - \mathbb{J}_1$.  By induction, we see that the corresponding framed surgery on $\mathbb{J}$ gives $S^3$.  This is of course independent of $n_1,\ldots, n_k$.  Hence $Z(n_1,\ldots,n_k) = S^3$ and by Equation~\eqref{eq:d-bounded}, it follows that $d(Y(n_1,\ldots,n_k))$ is uniformly bounded.  
 \end{proof}
 
\subsection{Combining separating twists and bounding pair maps}
We are able to combine the ideas from the proofs of Proposition~\ref{prop:separating} and Proposition~\ref{prop:bounding-pair} into one result.

\family*

\begin{proof}
Since there are only finitely many length $q$ sequences of elements in $\{1,\ldots,k\}$, it suffices to show that the set 
\[
\{d(Y_a) \mid a = \phi^{n_1}_{i_1} \ldots \phi^{n_q}_{i_q}, \ n_1,\ldots,n_q \in \mathbb{Z} \}
\]
is bounded for a fixed sequence $i_1,\ldots, i_q$.  

Without loss of generality, $\phi_{1},\ldots, \phi_{m}$ are separating twists and $\phi_{m+1}, \ldots, \phi_k$ are bounding pair maps. Identify a neighborhood of $S$ in $Y$ with $S \times I$. For $1 \leq j \leq q$, if $1 \leq i_j \leq m$, let $\gamma_j$ be the curve on $S \times \{1/j\}$ in $Y$ on which the separating Dehn twist is performed; if $m+1 \leq i_j \leq q$ let $\alpha_{j}, \beta_j$ on $S \times \{1/j\}$ be the pair of curves corresponding to $\phi_{i_j}$ as in the previous subsection.  By construction, the $\gamma$'s and all $\alpha,\beta$ are disjoint, and we can view the collection of all of these curves as a link $L$ in $Y$.  Upgrade this to a link $\mathbb{L}$ where we have attached meridians to the $\alpha$ and $\beta$ curves as in the previous subsection.  

By definition, $\gamma_{j}$ bounds a subsurface in $S \times \{1/j\}$, say $F_{j}$, which is disjoint from $\alpha_{j'}, \beta_{j'}$ for all $j \neq j'$, and so  
\begin{equation}\label{eq:linking}
lk(\gamma_{j}, \alpha_{j'}) = lk(\gamma_{j}, \beta_{j'}) = 0.  
\end{equation}
Of course, the only components that link the meridians are the curves which they are meridians of.  As in Proposition~\ref{prop:separating} and Proposition~\ref{prop:bounding-pair}, Lemma~\ref{lem:surgery-convert} implies it suffices to consider a link $\mathbb{T}$ in $S^3$ with the same pairwise linking as $\mathbb{L}$, and show that the corresponding surgeries on $\mathbb{T}$ have bounded $d$-invariants, independent of $n_1,\ldots,n_q$. 

Equation~\eqref{eq:linking} tells us that we can replace $\mathbb{T}$ with a split link in $S^3$, where, after re-ordering the components, the components corresponding to separating twists form an unlink and the remaining components corresponding to bounding pair maps form a link analogous to $\mathbb{J}$ from the proof of Proposition~\ref{prop:bounding-pair} (i.e. collections of $(2,\ell_i)$ cables with meridians).  Combining the proofs for the cases of separating twists and bounding pair maps shows that the corresponding surgeries on this link compute $S^3$.  Again, Lemma~\ref{lem:surgery-convert} now implies that the $d$-invariants of $Y_{\phi_1^{n_1} \cdots \phi_q^{n_q}}$ are bounded independent of $n_1,\ldots,n_q$.  This completes the proof.
\end{proof}

\section{Rational Homology Spheres}\label{sec:rational}
As outlined in the introduction, the relationships between invariants of integral homology spheres and the algebraic structure of $\mathcal{I}_g$ are quite surprising. For example, the Birman-Craggs-Johnson homomorphism \cite{birmancraggs, johnson} is constructed using the Rokhlin invariant. One of the reasons the Morita formula \eqref{eq:moritaformula} is so powerful is that it exactly measures the failure of the Casson invariant to be a homomorphism on $\mathcal{I}_g$ and is related to what Morita calls the ``core" of the Casson invariant. In \cite{moritacharclass}, he develops these ideas in the setting of characteristic classes of surface bundles. Therefore, if there were a similar framework to relate invariants of rational homology spheres and $\modg (k)$ one might hope to extend Morita's (and others') results in a straightforward way. However, we show this is not possible. 

\brokenmorita*

\begin{proof}[Proof of Theorem~\ref{thm:brokenmorita}]
Let $S$ denote a genus $g$ Heegaard surface $S$ for $S^3$.  We will construct separating twists $\psi, \eta$ and a non-separating Dehn twist $\phi$ such that for any nonzero $k \in \mathbb{Z}$
\[
\lambda(S^3_{\psi\phi^k}) = \lambda(S^3_{\psi}) + \lambda(S^3_{\phi^k}),
\]
and 
\[
\lambda(S^3_{\eta \phi^k}) = \lambda(S^3_{\eta}) + \lambda(S^3_{\phi^p}) + k,
\]
and also a self-homeomorphism of $S$ such that $\phi$ is preserved under conjugation and $\psi$ is sent to $\eta$.  All of the three-manifolds are homology spheres.  This will show that a Morita formula cannot exist defined purely in terms of the embedded Heegaard surface (although there may be a formula which depends on the embedding itself).    

\begin{figure}
    \centering
     
     \begin{subfigure}{0.45\textwidth}
        \centering
        \labellist                             
    \large\hair 2pt
        \pinlabel {{\color{violet}$K$}} at 300 72
    \pinlabel {{\color{orange}$C$}}  at 300 190
      \pinlabel {{\color{magenta}$M$}}  at 190 190
    \endlabellist  
        \includegraphics[width=0.9\linewidth]{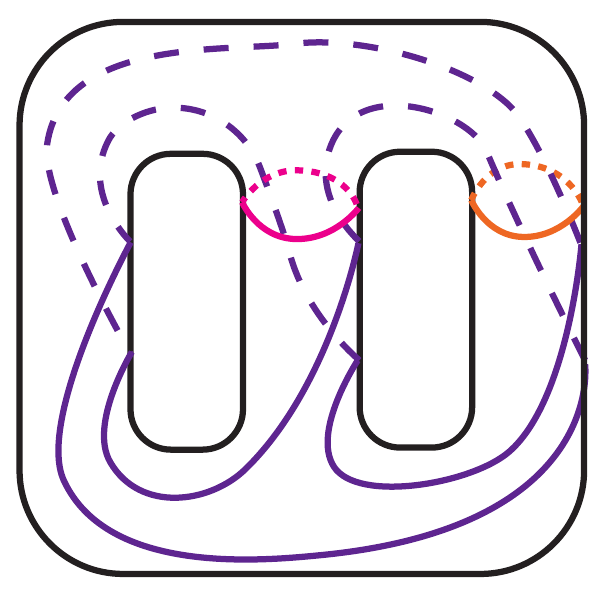}
    \end{subfigure}
    \begin{subfigure}{0.45\textwidth}
        \centering
         \labellist 
        \large\hair 2pt
        \pinlabel {{\color{teal}$L$}} at 300 72
    \endlabellist  
        \includegraphics[width=0.9\linewidth]{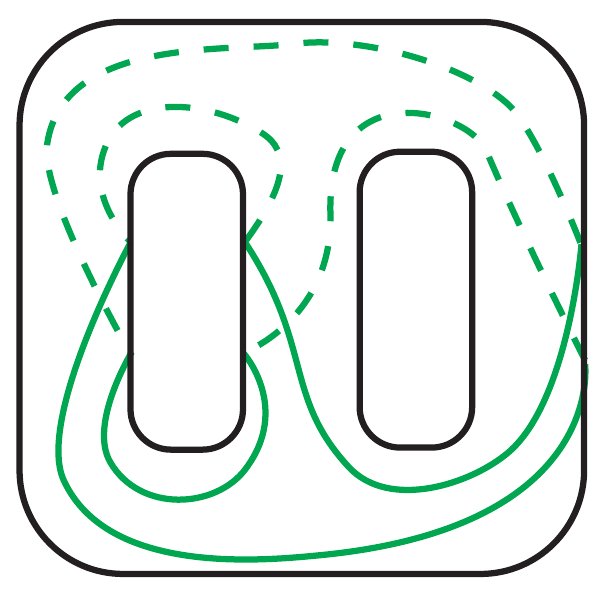}
    \end{subfigure}
    \caption{}
    \label{fig:rationalmorita}
\end{figure}

Consider the curves $C, K, L$ in Figure~\ref{fig:rationalmorita}.  Note that from the figure, $C, K, L$  have the following properties:   
\begin{enumerate}
\item There is a self-homeomorphism of $S$ fixing $C$ and sending $K$ to $L$ (namely, the Dehn twist on $M$).  
\item $K, L$ are separating curves on $S$ while $C$ is not.
\item $C$ is unknotted in $S^3$, $K = P(1,1,1) = T_{2,3}$ and $L = P(1,-1,1) = U$.  
\item Pushing $C$ off of $S$ has linking number 0 with each of $K$ and $L$.  
\item The surface framings of $K, L, C$ are all the Seifert framings, so that Torelli surgery for a $k$-fold Dehn twist corresponds to a $-1/k$-surgery.  
\item Pushing $C$ off $S$  and performing $-1/k$-surgery turns $K$ into $P(1,1,1 + 2k)$ and keeps $L$ unknotted.  
\end{enumerate}

Let $\phi, \psi, \eta$ denote negative Dehn twists along $C, K, L$ respectively.  Note that for all $k$
\begin{align*}
S^3_{\psi} &= S^3_{+1}(P(1,1,1)) = \Sigma(2,3,5), \\
S^3_{\eta} &= S^3_{\phi^k} = S^3, \\
S^3_{\psi\phi^k} &= S^3_{+1}(P(1,1,1+2k)).
\end{align*}
The Casson invariant of $S^3$ is 0.  The Casson invariant of $S^3_1(P(1,1,1+2k))$ is $k+1$ by \eqref{eq:casson-surgery}, since the Alexander polynomial of $P(1,1,1+2k)$ is $k t - (2k-1) + kt^{-1}$ for $k \geq 0$ and $|k|t - (2|k|+1) + |k|t^{-1}$ for $k < 0$.  Hence, we have the desired result.
\end{proof}

\bibliography{bib.bib}
\bibliographystyle{alpha}
\end{document}